\numberwithin{equation}{section}     
\def\accentsfrancais{applemac}
\newtheorem{thm}{Theorem}
\newtheorem{theorem}[thm]{Theorem}
\numberwithin{thm}{section}
\newtheorem{lemma}[thm]{Lemma}
\newtheorem{prop}[thm]{Proposition}
\newtheorem{remark}[thm]{Remark}
\newtheorem{definition}[thm]{Definition}
\def\ds{\displaystyle}
\def\emptyset{/\kern-.51em o}
\def\eq{\mathop{\vrule height2,6pt depth-2,3pt
         width -1pt\kern 0pt =}}
\let\norbali\normalbaselines
\def\anorbali{\norbali\advance\lineskip\jot
\advance\baselineskip\jot\advance\lineskiplimit\jot}
\def\ouvre{\let\normalbaselines\anorbali}
\def\D{{\mathbb{D}}}
\def\R{{\mathbb{R}}}
\def\V{{\mathbb{V}}}
\def\W{{\mathbb{W}}}
\def\N{\rm \hbox{I\kern-.2em\hbox{N}}}
\def\Z{\rm \hbox{Z\kern-.3em\hbox{Z}}}
\def\Ga{{\bf a}}
\def\Gb{{\bf b}}
\def\Gc{{\bf c}}
\def\Ge{{\bf e}}
\def\Gn{{\bf n}}
\def\Gt{{\bf t}}
\def\GA{{\bf A}}
\def\GD{{\bf D}}
\def\GE{{\bf E}}
\def\GI{{\bf I}}
\def\GQ{{\bf Q}}
\def\GR{{\bf R}}
\def\GS{{\bf S}}
\def\GV{{\bf V}}
\def\eps{\varepsilon}
\begin{document}
\title{ASYMPTOTIC BEHAVIOR OF STRUCTURES MADE OF  STRAIGHT RODS.}
\author{D. Blanchard$^{1}$, G. Griso$^{2}$}
\date{}
\maketitle
 
{\footnotesize
\begin{center}

$^{1}$  Universit\'e de Rouen, UMR 6085,  76801   Saint Etienne du Rouvray Cedex, France, \\ E-mail: dominique.blanchard@univ-rouen.fr

$^{2}$ Laboratoire J.-L. Lions--CNRS, Bo\^\i te courrier 187, Universit\'e  Pierre et
Marie Curie,\\ 4~place Jussieu, 75005 Paris, France, \; Email: griso@ann.jussieu.fr\\

\end{center} }

\begin{abstract}
 This paper  is devoted  to describe the deformations and the elastic energy  for structures made of straight
  rods of thickness $2\delta$   when $\delta$ tends to 0. This analysis relies on the decomposition of the large deformation of a single rod introduced in \cite{BG4} and on the extension of this technique to a multi-structure. We characterize  the asymptotic behavior of the infimum of the total elastic energy as the minimum of a limit functional  for an energy of order $\delta^\beta$ ($2<\beta\le 4$).
  
\end{abstract}
 
\smallskip
\noindent KEY WORDS: nonlinear elasticity,  junctions, rods. 

\noindent Mathematics Subject Classification (2000): 74B20, 74K10, 74K30. 

\section{ Introduction} 

This paper concerns the modeling of a structure ${\cal S}_\delta$ made of elastic straight rods  whose cross sections are discs of  radius $\delta$. The centerlines of the rods form
the skeleton  structure ${\cal S}$. We introduce a notion of elementary deformations of this structure based on the decomposition of a large deformation of a rod introduced in \cite {BG2}. A special care is devoted in the junctions of the rods  where these elementary deformations are translation-rotations.  An elementary deformation is characterized by two fields defined on the skeleton ${\cal S}$. The first one ${\cal V}$ stands for the centerlines deformation while the second one $\GR$ represents the rotations of the cross sections. For linearized deformations in plates or rods  structures such decompositions have been considered in \cite{GSP} and \cite{GSR}.

Then to an arbitrary deformation $v$ of ${\cal S}_\delta$, we associate an elementary deformation $v_e$  such that the residual part $\overline{v}=v-v_e$ is controlled by   $||\hbox{dist}(\nabla_x v,SO(3))||_{L^2({\cal S}_{\delta})}$ and $\delta$. In order to construct the deformation $v_e$ we first apply the rigidity theorem of \cite{FJM} -in the form given in \cite{BG2}- in a neighborhood  of each junction  to obtain  a constant translation-rotation in each junction. Then we match the decomposition derived in \cite{BG2} in each rod with this constant  translation-rotation. Doing such, we obtain estimates on ${\cal V}$ and $\GR$  for the whole structure ${\cal S}_\delta$ (see Theorem 3.3). Upon the assumption that the structure is fixed on some extremities, these estimates allow us to established a nonlinear Korn's type inequality for the admissible deformations.  Moreover we are in a position to analyze the asymptotic behavior of the Green-St Venant's strain tensor $E(v_\delta)={1/2}\big((\nabla v_\delta)^T\nabla v_\delta-\GI_3\big)$ for a sequence $v_\delta$ of admissible deformations such that $||\hbox{dist}(\nabla_x v_\delta,SO(3))||_{L^2({\cal S}_{\delta})}= O(\delta^K)$ for $1< K \le2$.

We then consider an elastic structure, whose  energy is denoted $W$, submitted to applied body forces $f_{\kappa,\delta}$ whose order with respect to  the parameter $\delta$ is related a constant $\kappa\ge 1$ (see below the order of $f_{\kappa,\delta}$). The total energy is given by $\ds J_{\kappa,\delta}(v)=\int_{{\cal S}_\delta} W(E( v))-\int_{{\cal S}_\delta} f_{\kappa,\delta}\cdot (v-I_d)$ if $\det (\nabla v)>0$ where $I_d$ is the identity map. We adopt a few usual assumptions on $W$ (see \eqref{HypW}). We set
$$ m_{\kappa,\delta}=\inf_{v\in \D_\delta}J_{\kappa,\delta}(v),$$ where $\D_\delta$ is the set of admissible deformations.

 We assume that the order of  $f_{\kappa,\delta}$ is equal to $\delta^{2\kappa-2}$ if $1\le\kappa\le2$ (or $\delta^{\kappa}$ if $\kappa\ge 2$) outside the junctions and that $f_{\kappa,\delta}$ is constant in each junction with order  $\delta^{2\kappa-3}$ if $1\le\kappa\le2$ (or $\delta^{\kappa-1}$ if $\kappa\ge 2$).  Then using the Korn's type inequality  mentioned above allows us to show that the  order of $m_{\kappa,\delta}$ is   $\delta^{2\kappa}$.    The aim of this paper is  to prove that the sequence $\ds {m_{\kappa,\delta}\over \delta^{2\kappa}}$ converges (when $\delta$ tends to 0) and to 
characterize its limit $m_\kappa$ as the minimum of a functional defined on ${\cal S}$ for $1<\kappa\le2$ (we will analyze the case $ \kappa >2 $ in a forthcoming paper). Indeed the derivation of this functional relies  on the asymptotic behavior of the Green-St Venant's strain tensor for minimizing sequences. The limit centerline deformation ${\cal V}$ is linked to the limit $\GR$ of the rotation fields via its derivatives and the  centerline directions. For  $1<\kappa<2$, the limit energy depends linearly on the two fields $({\cal V},\GR)$ where in particular $\GR$ takes its value in the convex hull of $SO(3)$. In the case $\kappa=2$, the limit of the rotation $\GR$  takes its value in $SO(3)$ and the functional is quadratic with respect to its derivatives.

\vskip 1mm
As  general references for the theory of elasticity we refer to \cite{Ant},  \cite{C1}. The theory of rods is developed  in e.g. \cite{Ant1} and \cite{Trab}. In the framework of linear elasticity, we refer to \cite{CDN} for the junction of a three dimensional domain and a two dimensional one, to \cite{DJR} for the junction of two rods and to \cite{DJP} for the junction of two plates. The junction between a rod and a plate in the linear case  is investigated in \cite{Gru1} and \cite{Murat} and in nonlinear elasticity in \cite{Gru2}. The decomposition of the displacements in thin structures has been introduced in \cite{GROD} and in \cite{GSP} and then used  in \cite{BGG1}, \cite{BGG2} and \cite{BG1} for the homogenization of the junction of rods and a plate. 
\vskip 1mm
This paper is organized as follows.  In Section 2 we specify the geometry of the structure.  Section 3 is  devoted to introduce the elementary deformation associated to a deformation and to establish  estimates. A Korn's type inequality for the structure is derived in Section 4. In Section 5  the usual  rescaling of each rod is recalled.  The limit behavior of the Green-St Venant's strain tensor is analyzed in Section 6 for sequences $(v_\delta)$ such that $||\hbox{dist}(\nabla_x v_\delta,SO(3))||_{L^2({\cal S}_{\delta})}= O(\delta^K)$ for $1< K \le2$. The assumptions on the elastic energy are introduced in Section 7 together with the scaling on the applied forces. The characterization of $m_\kappa$ is performed in Section 8 for $1<\kappa<2$ and in Section 9 for $\kappa=2$. At the end, an appendix contains a few technical results. The results of the present  paper have been announced in \cite{BGNOTE}.  
\section{Geometry and notations.} 

\subsection{The rod structure.}
 The Euclidian space $\R^3$ is related to the frame  $(O;\Ge_1,\Ge_2,\Ge_3)$. We denote   by $\|\cdot\|_{2}$ the euclidian norm   and by $\cdot$ the scalar product in $\R^3$.  

For an integer $N\ge1$ and  $i\in\{1,\ldots,N\}$, let  $\gamma_i$  be a segment  parametrized by $s_i$, with direction the unit vector $\Gt_i$,  origin the point $P_i\in \R^3$ and length $L_i$.  We have $\gamma_i=\varphi_i([0,L_i])$ where 
$$\varphi_i(s_i)=P_i+s_i\Gt_i,\qquad s_i\in\R.$$
So, the running point of   $\gamma_i$ is $\varphi_i(s_i)$, $0\le s_i\le L_i$. 
The  extremities of the segments make up a set denoted $\Gamma$. 

For any $i\in\{1,\ldots,N\}$, we choose a unit vector $\Gn_i$ normal to $\Gt_i$ and  we set
\begin{equation*}
\Gb_i=\Gt_i\land \Gn_i.
\end{equation*} 
 
 The structure-skeleton ${\cal S}$ is the set $\ds\bigcup_{i=1}^N\gamma_i$. The common points to two segments are called knots and the set of knots is denoted ${\cal K}$. For any $\gamma_i$, $i\in\{1,\ldots,N\}$,  we denote by $a^j_i$ the arc-length of the knots belonging to $\gamma_i$,
 $$0\le a^1_i <a_i^2< \ldots < a_i^{K_i}\le L_i.$$  
  Among all the knots,  $\Gamma_{\cal K}$ is the set of those which are extremities of all segments containing them.
 
\noindent{\bf  Geometrical hypothesis. }{\it We assume the following hypotheses on ${\cal S}$:

$\bullet$ ${\cal S}$ is connected,

$\bullet$  $\forall (i,j)\in\{1,\ldots,N\}^2$
$$i\not=j\qquad \Longrightarrow\qquad \gamma_i\cap\gamma_j=\emptyset \quad \hbox{or}\quad \gamma_i\cap\gamma_j\hbox{ is reduced to one knot.}$$}

We denote by $\omega$ the unit disc of center the origin and we set  $\omega_\delta= \delta \omega$ for   $\delta>0$. The reference cylinder of length $L_i$ and cross-section $\omega_\delta$  is denoted $\Omega_{i,\delta}=]0,L_i[\times \omega_\delta$.

\noindent \begin{definition}\label{DEF1} For all $i\in\{1,\ldots, N\}$, the  straight rod ${\cal P}_{i,\delta}$  is the cylinder of center line $\gamma_i$ and reference cross-section $\omega_\delta$.
We have ${\cal P}_{i,\delta}=\Phi_i\bigl(\Omega_{i,\delta}\bigr)$ where 
$$\Phi_i(s)=\varphi_i(s_i)+y_2\Gn_i+y_3\Gb_i=P_i+s_i\Gt_i+y_2\Gn_i+y_3\Gb_i,\qquad s=(s_i,y_2,y_3)\in\R^3.$$

\noindent The whole structure ${\cal S}_\delta$ is
\begin{equation*}
{\cal S}_\delta=\Big(\bigcup_{i=1}^N{\cal P}_{i,\delta}\Big)\cup\Big(\bigcup_{A\in \Gamma_{\cal K}} B\big(A; \delta\big)\Big).
\end{equation*}
\end{definition} Loosely speaking, in the definition of ${\cal S}_\delta$ a ball of radius $\delta$ is added at each knot  which belongs to $\Gamma_{\cal K}$. Remark that for a  knot $A\not\in\Gamma_{\cal K}$, the ball $B\big(A; \delta\big)$is  at least included in one ${\cal P}_{i,\delta}$.

There exists $\delta_0>0$ such that for $0<\delta\le \delta_0$ and for all  $(i,j)\in\{1,\ldots,N\}^2$, we have
$${\cal P}_{i,\delta}\cap{\cal P}_{j,\delta}=\emptyset\quad \hbox{if and only if}\quad\gamma_i\cap\gamma_j=\emptyset.$$

The reference domain associated to the straight  rod ${\cal P}_{i,\delta}$ is the open set $\Omega_i=]0,L_i[\times \omega$ (recall that $\omega$  is the disc of center the origin and radius 1). The running point of $\Omega_i$ (resp. $\Omega_{i,\delta}$, ${\cal S}_\delta$) is denoted $(s_i,Y_3,Y_3)$ (resp.  $(s_i,y_2,y_3)$,  $x$).
\subsection{The junctions.}
In what follows we deal with portions of the rod  ${\cal P}_{i,\delta}$. For any $h>0$ we set
\begin{equation*}
%\label{portion}
{\cal P}_{i,\delta}^{a,h}=\Phi_i\big(\Omega_{i,\delta}^{a,h}\big)\cap {\cal S}_\delta \quad \hbox{where}\quad \Omega_{i,\delta}^{a,h}=]a-h\delta,a+h\delta[\times \omega_\delta ,\qquad 0\le a \le L_i.
\end{equation*}
\vskip 1mm
 Let $A$ be a knot, for $h>0$ we consider the open set 
\begin{equation}\label{junction}
{\cal J}_{A, h\delta}=\bigcup_{i\in\{1,\ldots,N\},\; A\in\gamma_i} {\cal P}_{i,\delta}^{a^k_i,h}\qquad A=\varphi_i(a^k_i)\enskip\hbox{if }\; A\in \gamma_i.
\end{equation}

Up to choosing $\delta_0$ smaller, it is clear that there  exists  a real number 
$$\ds 1\le \rho_0\le {1\over 4\delta_0}\min_{(A,B)\in {\cal K}^2, A\not=B}||\overrightarrow{AB}||_2$$ depending on  ${\cal S}$ (via the angles between the segments of the skeleton ${\cal S}$) such that  for all $0<\delta\le \delta_0$

$\bullet$ $ {\cal J}_{A, (\rho_0+1)\delta}\cap  {\cal J}_{B, (\rho_0+1)\delta}=\emptyset$ for all distinct knots $A$ and $B$,

$\bullet $ the set $\ds {\cal S}_\delta \setminus \bigcup_{A\in {\cal K}}{\cal J}_{A, \rho_0\delta}$ is made by disjoined cylinders. 
\vskip 1mm
\noindent The junction in the neighborhood of $A$ is the domain ${\cal J}_{A, \rho_0\delta}$.

\noindent Notice that for any knot $A$ and for any $0<\delta\le \delta_0$
\begin{equation}\label {junctionTR}
\begin{aligned}
&\hbox{\it the domain ${\cal J}_{A, (\rho_0+1)\delta}$ is star-shaped with respect to the ball $B\big( A ; \delta\big)$}\\
&\hbox{\it and has a diameter less than $(2\rho_0+5)\delta$.}
 \end{aligned}\end{equation} 
\subsection{The functional spaces.}
\noindent For $q\in[1,+\infty]$, the $L^q$-class fields of ${\cal S}$ is the product space
\begin{equation*}
L^q({\cal S};\R^p)= \prod_{i=1}^N L^q(0,L_i;\R^p) \end{equation*} equiped   with the norm
\begin{equation*}
\begin{aligned}
&||V||_{L^q({\cal S};\R^p)}=\Big(\sum_{i=1}^N||V_i||^q_{L^q(0,L_i ;\R^p)}\Big)^{1/q},\quad V =(V_1,\ldots, V_N),\quad q\in[1,+\infty[,\\
&||V||_{L^{\infty}({\cal S};\R^p)}=\max_{i=1,\ldots,N}||V_i||_{L^{\infty}(0,L_i ;\R^p)}.
\end{aligned}
\end{equation*} 

\noindent The $W^{1,q}$-class fields of ${\cal S}$ make up a space denoted
\begin{equation*}
%\label{H1Sq}
\begin{aligned}
W^{1,q}({\cal S};\R^p)=\Bigl\{V\in & \prod_{i=1}^N W^{1,q}(0,L_i;\R^p)\;| \;V =(V_1,\ldots, V_N),\;\;\hbox{such that }\\
& \hbox{for any } A\in {\cal K},\;\hbox{  if  }A\in\gamma_i\cap\gamma_j, \hbox{ with } A=\varphi_i(a_i^k)=\varphi_j(a_j^l),\\
&\hbox{ then one has }  V_i(a^k_i)=V_j(a^l_j)\Bigr\}.
\end{aligned}
 \end{equation*} The common value $V_i(a^k_i)$ is denoted $V(A)$. We equip $W^{1,q}({\cal S};\R^p)$ with the norm
\begin{equation*}
\begin{aligned}
&||V||_{W^{1,q}({\cal S};\R^p)}=\Big(\sum_{i=1}^N||V_i||^q_{W^{1,q}(0,L_i ;\R^p)}\Big)^{1/q},\qquad q\in[1,+\infty[,\\
&||V||_{W^{1,\infty}({\cal S};\R^p)}=\max_{i=1,\ldots,N}||V_i||_{W^{1,\infty}(0,L_i ;\R^p)}.
\end{aligned}
\end{equation*} 
  Indeed, the parametrization $\varphi=( \varphi_1,\ldots,\varphi_N)$ of ${\cal S}$ belongs to $W^{1,\infty}({\cal S} ;\R^3)$.
  
We also denote by $L^q({\cal S};SO(3))$ (respectively $W^{1,q}({\cal S};SO(3))$) the set of matrix fields $\GR$ in $L^q({\cal S};\R^{3\times 3})$ (resp. $W^{1,q}({\cal S};\R^{3\times 3})$) satisfying $\GR(s_i)\in SO(3)$ for almost any $s_i\in ]0,L_i[$, $i\in\{1,\ldots,N\}$.
\section{An approximation theorem.} 
\subsection{Definition of elementary rod-structure deformations.}
 We recall that for $x\in {\cal P}_{i,\delta}$ we have $x=\Phi_i(s)$ where $s\in \Omega_{i,\delta}$.
 
\begin{definition}\label{DEF2} An {\bf elementary rod-structure deformation} is a deformation $v_e$  verifying in each rod ${\cal P}_{i,\delta}$ ($i\in \{1,\ldots,N\}$) and each junction  ${\cal J}_{A,\rho_0\delta}$
\begin{equation}\label{defelem}
\begin{aligned}
v_e(s)&={\cal V}_i(s_i)+\GR_i(s_i)\big(y_2\Gn_i+y_3\Gb_i\big)\qquad s=(s_i,y_2,y_3)\in \Omega_{i,\delta}\\
v_e(x)&={\cal V}(A)+\GR(A)\big(x-A\big)\qquad x\in {\cal J}_{A,\rho_0\delta}\\
\end{aligned}
\end{equation}
where ${\cal V}\in H^{1}({\cal S};\R^3)$ and $\GR\in H^{1}({\cal S};SO(3))$ are such that  $v_e\in H^1({\cal
S}_\delta;\R^3)$. 
\end{definition}
\vskip 1mm
Recall that in the above definition ${\cal V}(A)$ (respectively $\GR(A)$) denote the common value of ${\cal V}_i$ (resp. $\GR_i$) at the knot $A$. Let us notice that in view of  Definition \ref{DEF2} and of  ${\cal J}_{A,\rho_0\delta}$ one has 
\begin{equation}\label{defelemJ}
\begin{aligned}
v_e(s)&={\cal V}(A)+(s_i-a^k_i)\GR(A)\Gt_i+\GR(A)\big(y_2\Gn_i+y_3\Gb_i\big),\\
 s&=(s_i,y_2,y_3)\in {\cal J}_{A,\rho_0\delta}\cap \Omega_{i,\delta},\quad A=\varphi_i(a^k_i),\\
{\cal V}_i(s_i)&={\cal V}(A)+(s_i-a^k_i)\GR(A)\Gt_i,\quad \GR_i(s_i)=\GR(A),\\
 s_i&\in ]a^k_i-\rho_0\delta, a^k_i+\rho_0\delta[\cap[0,L_i].
 \end{aligned}
\end{equation}  The field ${\cal V}_i$ stands for the deformation of the line $\gamma_i$ while $\GR_i(s_i)$ represents the rotation of the cross section with arc lenght $s_i$ on $\gamma_i$. Then Definition \ref{DEF2} impose to an elementary deformation to be a translation-rotation in  each junction.

\subsection {Decomposition of the deformation in each rod ${\cal P}_{i,\delta}$.}
According to \cite{BG2}, we first recall that   any deformation $v\in H^1({\cal P}_{i,\delta} ; \R^3 )$ can be decomposed as 
\begin{equation}\label{Decomp}
v(s)={\cal V}^{'}_i (s_i)+\GR^{'}_i(s_i)\big(y_2\Gn_i+y_3\Gb_i\big)+\overline{v}^{'}_i(s),\qquad s=(s_i,y_2,y_3)\in \Omega_{i,\delta},
\end{equation}
where ${\cal V}^{'}_i $ belongs to $H^1(0,L_i ; \R^3 )$, $\GR^{'}_i$  belongs to $H^1(0,L_i ; \R^{3\times 3})$ and satisfies for any $s_i\in [0,L_i]$: $\GR^{'}_i(s_i)\in SO(3)$ and $\overline{v}^{'}_i$ belongs to $H^1({\cal P}_{i,\delta} ; \R^3 ) $ (or $H^1(\Omega_{i,\delta} ; \R^3 ) $  using  again the same convention as for $v$). The term ${\cal V}^{'}_i$ gives the deformation of the center line of the rod. The second term $\GR^{'}_i(s_i)\big(y_2\Gn_i+y_3\Gb_i\big)$ describes the rotation of the cross section (of the   rod) which contains the point $\varphi_i(s_i)$. The part ${\cal V}^{'}_i (s_i)+\GR^{'}_i(s_i)\big(y_2\Gn_i+y_3\Gb_i\big)$ of the decomposition of $v$ is an elementary deformation of the rod ${\cal P}_{i,\delta}$. Let us notice that there is  no reason for ${\cal V}^{'}=\big({\cal V}^{'}_1,\ldots,{\cal V}^{'}_N\big)$ to belong to $H^1({\cal S};\R^3)$ or for  ${\GR}^{'}=\big(\GR^{'}_1,\ldots,\GR^{'}_N\big)$ to belong to $H^1({\cal S};\R^{3\times 3})$. This is why, in order to define an elementary deformation of the whole structure ${\cal S}_\delta$, we will modify the fields ${\cal V}^{'}_i$ and $\GR^{'}_i$ near to the junction ${\cal J}_{A, (\rho_0+1)\delta}$ at each knot $A$. In view of \eqref{DecN}, we choose the field $\GR_A(x-A)+{\bf a}_A$ as the elementary deformation in ${\cal J}_{A, \rho_0\delta}$.

The following theorem is proved in \cite{BG2}:
\smallskip
\begin{theorem}  Let $v\in H^1({\cal P}_{i,\delta} ; \R^3 ) $, there exists a decomposition \eqref{Decomp} such that 
\begin{equation}\label{DCP}
\begin{aligned}
&||\overline{v}^{'}_i||_{L^2(\Omega_{i,\delta} ; \R^3)}\le C\delta ||\hbox{dist}(\nabla_x v,SO(3))||_{L^2({\cal P}_{i,\delta})},\\
&||\nabla_s\overline{v}^{'}_i||_{L^2(\Omega_{i,\delta} ; \R^{3\times 3})}\le C ||\hbox{dist}(\nabla_x v,SO(3))||_{L^2({\cal P}_{i,\delta})},\\
&\Bigl\|{d\GR^{'}_i\over ds_i}\Big\|_{L^2(0,L_i ; \R^{3\times 3})}\le {C\over \delta^2} ||\hbox{dist}(\nabla_x v,SO(3))||_{L^2({\cal P}_{i,\delta})},\\
& \Bigl\|{d{\cal V}^{'}_i\over ds_i}-\GR^{'}_i \Gt_i\Big\|_{L^2(0,L_i ; \R^3)}\le {C\over \delta}||\hbox{dist}(\nabla_x v,SO(3))||_{L^2({\cal P}_{i,\delta})},\\
& \bigl\|\nabla_x v-\GR^{'}_i \big\|_{L^2(\Omega_{i,\delta};\R^{3\times 3})}\le C||\hbox{dist}(\nabla_x v,SO(3))||_{L^2({\cal P}_{i,\delta})},
\end{aligned}
\end{equation}  where the constant $C$ does not depend on $\delta$ and $L_i$.
\end{theorem}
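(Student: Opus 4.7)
The plan is to apply the Friesecke--James--M\"uller rigidity theorem locally on small sub-cylinders of ${\cal P}_{i,\delta}$ of axial length $\sim\delta$, and then glue the resulting constant rotations and translations into smooth fields on $]0,L_i[$. Partition $]0,L_i[$ by $\sigma_k=k\delta$ and consider the overlapping sub-cylinders $\Omega^k_{i,\delta}=\bigl(]\sigma_k-\delta,\sigma_k+\delta[\cap]0,L_i[\bigr)\times\omega_\delta$. Rescaling by $\delta^{-1}$ transforms each $\Omega^k_{i,\delta}$ into a domain of unit size with fixed geometry, so FJM (in the form used in \cite{BG2}) yields constants $R_k\in SO(3)$ and $b_k\in\R^3$ with
$$\int_{\Omega^k_{i,\delta}}|\nabla_x v-R_k|^2+\delta^{-2}\int_{\Omega^k_{i,\delta}}|v-b_k-R_k(x-P_i-\sigma_k\Gt_i)|^2\le C\int_{\Omega^k_{i,\delta}}\hbox{dist}(\nabla_x v,SO(3))^2,$$
with $C$ independent of $\delta$ and $L_i$ thanks to the scale invariance of the rescaling.

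Next I would build $\GR'_i$ by interpolation. Let $\widetilde R(s_i)$ be the piecewise-affine matrix field obtained by linearly interpolating the $R_k$ on successive subintervals $[\sigma_k,\sigma_{k+1}]$, and take $\GR'_i$ to be the nearest-point projection of $\widetilde R$ onto $SO(3)$, which is smooth and Lipschitz in a neighborhood of $SO(3)$. Comparing the FJM estimates for consecutive patches on their overlap $\Omega^k_{i,\delta}\cap\Omega^{k+1}_{i,\delta}$ (whose volume is $\sim\delta^3$) yields
$$|R_{k+1}-R_k|^2\le C\delta^{-3}\|\hbox{dist}(\nabla_x v,SO(3))\|^2_{L^2(\Omega^k_{i,\delta}\cup\Omega^{k+1}_{i,\delta})}.$$
Since $|d\widetilde R/ds_i|=|R_{k+1}-R_k|/\delta$ on each subinterval of length $\delta$, summing over $k$ and invoking Lipschitzness of the projection gives
$\|d\GR'_i/ds_i\|^2_{L^2(0,L_i)}\le C\delta^{-4}\|\hbox{dist}(\nabla_x v,SO(3))\|^2_{L^2({\cal P}_{i,\delta})},$
which is the third estimate.

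For the centerline I would set ${\cal V}'_i(s_i)=|\omega_\delta|^{-1}\int_{\omega_\delta}v(s_i,y_2,y_3)\,dy_2dy_3$, so that the residual $\overline v'_i=v-{\cal V}'_i-\GR'_i(y_2\Gn_i+y_3\Gb_i)$ has vanishing cross-sectional mean. Poincar\'e on $\omega_\delta$ then gives $\|\overline v'_i\|_{L^2}\le C\delta\|\nabla_{(y_2,y_3)}\overline v'_i\|_{L^2}$, and the $L^2$ norm of $\nabla_s\overline v'_i$ is controlled by $\|\nabla_x v-\GR'_i\|_{L^2({\cal P}_{i,\delta})}$, which in turn follows by summing the FJM estimates over the patches and adding the projection error. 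The identity
$$\partial_{s_i}{\cal V}'_i-\GR'_i\Gt_i=|\omega_\delta|^{-1}\int_{\omega_\delta}(\partial_{s_i}v-\GR'_i\Gt_i)\,dy_2dy_3$$
converts an $L^2$ bound of an integrand on a disc of radius $\delta$ into an $L^2$ bound of its mean as a function of $s_i$, losing exactly one factor of $\delta$ and producing the required $1/\delta$ scaling.

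The main obstacle is the sharp $\delta^{-2}$ estimate on $d\GR'_i/ds_i$, which is the bottleneck of the whole chain. It relies simultaneously on the scale invariance of the FJM rigidity constant under the $\delta^{-1}$ rescaling, on the precise $\delta^{-3}$ control of the rotation jumps on overlapping slices (this is where the cross-section volume enters), and on the Lipschitz stability of the nearest-point projection onto $SO(3)$; any loss at one of these three points would break the argument. Once the rotation estimate is secured, the remaining bounds are essentially routine integrations over the cross-section and summations over the patches.
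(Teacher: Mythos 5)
Your outline follows the same route as the proof this paper refers to (Theorem 3.1 is quoted from \cite{BG2}, not reproved here): local application of the rigidity theorem on overlapping sub-cylinders of axial length of order $\delta$, a jump estimate $|||\GR_{k+1}-\GR_k|||^2\le C\delta^{-3}\|\hbox{dist}(\nabla_x v,SO(3))\|^2_{L^2}$ coming from the overlap of volume $\sim\delta^3$, an interpolated rotation field, the cross-sectional average for ${\cal V}^{'}_i$, and Poincar\'e on $\omega_\delta$ for $\overline{v}^{'}_i$. The scalings you compute (in particular the loss of exactly $\delta^{-1}$ per step leading to $\delta^{-2}$ for $d\GR^{'}_i/ds_i$ and $\delta^{-1}$ for $d{\cal V}^{'}_i/ds_i-\GR^{'}_i\Gt_i$) are the correct ones, and the remaining estimates do follow by summation over patches and integration over the cross-section as you indicate.

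There is, however, one genuine gap: the construction of $\GR^{'}_i$ by piecewise-linear interpolation of the $R_k$ followed by nearest-point projection onto $SO(3)$. The theorem is asserted for an arbitrary $v\in H^1({\cal P}_{i,\delta};\R^3)$, with no smallness assumption on $\|\hbox{dist}(\nabla_x v,SO(3))\|_{L^2}$, so you cannot exclude consecutive rotations with $|||R_{k+1}-R_k|||$ of order $1$ (this happens as soon as the local energy on two patches exceeds $C\delta^{3}$). For such pairs the segment joining $R_k$ to $R_{k+1}$ leaves any neighborhood of $SO(3)$ where the projection is defined and Lipschitz; e.g.\ the midpoint of $\GI_3$ and a rotation by $\pi$ is a singular matrix, whose polar/nearest-point projection is undefined, and near-singular matrices make the projection constant blow up. So your field $\GR^{'}_i$ may simply fail to exist, and with it the third and fifth estimates. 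The fix is the one used in \cite{BG2} (and echoed in Step 2 of the proof of Theorem \ref{The3.3} of this paper, see \eqref{GRI}, and in Lemma \ref{LEMap4} of the Appendix): interpolate \emph{inside} $SO(3)$, writing $R_{k+1}=R_k\exp(\theta A)$ with $A$ antisymmetric and using that the geodesic (angle) distance on $SO(3)$ is equivalent to the Frobenius distance; this yields a field with values in $SO(3)$ for arbitrary pairs of rotations and the bound $\|d\GR^{'}_i/ds_i\|^2_{L^2(\hbox{interval})}\le (C/\delta)\,|||R_{k+1}-R_k|||^2$, after which your computation of the $\delta^{-2}$ estimate, and the rest of your argument, goes through unchanged.
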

\subsection {Decomposition of the deformation in each junction ${\cal J}_{A,\rho_0\delta}$.}
Let $v$ be a deformation belonging to $H^1({\cal S}_\delta ; \R^3)$ and let $A$ be a knot. We apply the rigidity theorem of \cite{FJM}, formulated in the version of Theorem  II.1.1 in \cite{BG2} which is licit because of \eqref{junctionTR}, to the domain  ${\cal J}_{A, (\rho_0+1)\delta}$. Hence there exist ${\bf R}_A\in SO(3)$ and ${\bf a}_A\in \R^3$ such that
\begin{equation}\label{DecN}
\begin{aligned}
&||\nabla_x v-{\bf R}_A||_{L^2({\cal J}_{A, (\rho_0+1)\delta} ; \R^{3\times 3})}\le C||\hbox{dist}(\nabla v, SO(3))||_{L^2({\cal J}_{A, (\rho_0+1)\delta})},\\
& ||v-{\bf a}_A-{\bf R}_A(x-A)||_{L^2({\cal J}_{A, (\rho_0+1)\delta} ; \R^{3})}\le C\delta||\hbox{dist}(\nabla v, SO(3))||_{L^2({\cal J}_{A, (\rho_0+1)\delta})},
 \end{aligned}
\end{equation}
with a constant $C$ which does not depend on $\delta$.
\vskip 1mm
\subsection{ The approximation theorem.}
  In Theorem \ref{The3.3} we show that any deformation  in  $H^1({\cal S}_\delta;\R^3)$  can be approximated by an elementary
rod-structure deformation $v_e\in H^1({\cal S}_\delta ; \R^3)$ of the type given in Definition  \ref {DEF2} 
\vskip 1mm
\begin{theorem}\label{The3.3}   Let $v$ be a deformation  in  $H^1({\cal S}_{\delta};\R^3)$. There exists an elementary  rod-structure deformation $v_e\in  H^1({\cal S}_\delta ; \R^3)$  in the sense of Definition \ref{DEF2} such that if we set   $\overline{v}=v-v_e$ 
\begin{equation}\label{Lem2}
\begin{aligned}
&||\overline{v}||_{L^2({\cal S}_{\delta} ; \R^3)}\le C\delta ||\hbox{dist}(\nabla_x v,SO(3))||_{L^2({\cal S}_{\delta})},\\
&||\nabla_x\overline{v}||_{L^2({\cal S}_{\delta} ; \R^{3\times 3})}\le C ||\hbox{dist}(\nabla_x v,SO(3))||_{L^2({\cal S}_{\delta})}.
\end{aligned}
\end{equation}
Moreover the fields ${\cal V}$ and $\GR$ associated to $v_e$ sastify
\begin{equation}\label{Lem1}
\begin{aligned}
&\sum_{i=1}^N\Bigl\|{d\GR_i\over ds_i}\Big\|_{L^2(0,L_i ; \R^{3\times 3})}\le  {C\over \delta^2}||\hbox{dist}(\nabla_x v,SO(3))||_{L^2({\cal S}_{\delta})},\\
& \sum_{i=1}^N\Bigl\|{d{\cal V}_i\over ds_i}-\GR_i\Gt_i\Big\|_{L^2(0,L_i ; \R^{3})}\le  {C\over \delta}||\hbox{dist}(\nabla_x v,SO(3))||_{L^2({\cal S}_{\delta})},\\
& \sum_{i=1}^N\bigl\|\nabla_x v-\GR_i \big\|_{L^2(\Omega_{i,\delta};\R^{3\times 3})}\le C||\hbox{dist}(\nabla_x v,SO(3))||_{L^2({\cal S}_{\delta})}.\end{aligned}
\end{equation}  In all the above estimates, the constant $C$ is independent on $\delta$ and on the lengths $L_i$ ($i\in\{1,\ldots,N\}$).
\end{theorem}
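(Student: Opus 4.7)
The plan is to combine the per-rod decomposition \eqref{Decomp}--\eqref{DCP} with the junction-wise rigidity bound \eqref{DecN}, then modify the per-rod fields $({\cal V}^{'}_i,\GR^{'}_i)$ on a thin overlap strip around each knot so that they take the prescribed constant translation-rotation value at the knot; this matching makes $({\cal V},\GR)$ continuous across each knot (so it lies in $H^1({\cal S};\R^3)\times H^1({\cal S};SO(3))$) and makes $v_e$ defined by \eqref{defelem} a translation-rotation on each ${\cal J}_{A,\rho_0\delta}$, as required by Definition \ref{DEF2}.

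For each rod, \eqref{Decomp}--\eqref{DCP} yield $({\cal V}^{'}_i,\GR^{'}_i,\overline{v}^{'}_i)$; for each knot $A$, the rigidity estimate \eqref{DecN} on the star-shaped domain ${\cal J}_{A,(\rho_0+1)\delta}$ yields $(\Ga_A,\GR_A)\in\R^3\times SO(3)$. These two descriptions are then compared on the overlap strip ${\cal P}_{i,\delta}^{a^k_i,\rho_0+1}$ of each rod through $A$: combining the last line of \eqref{DCP} with the first of \eqref{DecN} and exploiting that $\GR^{'}_i$ is independent of $(y_2,y_3)$ over a cross-section of area $\pi\delta^2$ gives
\begin{equation*}
\int_{|s_i-a^k_i|\le(\rho_0+1)\delta}|\GR^{'}_i(s_i)-\GR_A|^2\,ds_i\le \frac{C}{\delta^2}\|\hbox{dist}(\nabla v,SO(3))\|^2_{L^2({\cal J}_{A,(\rho_0+1)\delta})}.
\end{equation*}
An analogous Poincar\'e-type argument combined with the displacement bound in \eqref{DecN} yields a matching estimate for ${\cal V}^{'}_i(s_i)-\Ga_A-(s_i-a^k_i)\GR_A\Gt_i$.

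Next, choose a Lipschitz cutoff $\chi$ on $[0,L_i]$ vanishing on $[a^k_i-\rho_0\delta,a^k_i+\rho_0\delta]$, equal to $1$ outside $[a^k_i-(\rho_0+1)\delta,a^k_i+(\rho_0+1)\delta]$, with $|\chi^{'}|\le C/\delta$. On the overlap strip set
\begin{equation*}
{\cal V}_i(s_i)=\chi(s_i){\cal V}^{'}_i(s_i)+(1-\chi(s_i))\bigl(\Ga_A+(s_i-a^k_i)\GR_A\Gt_i\bigr),
\end{equation*}
and define $\GR_i$ there by an $SO(3)$-valued interpolation between $\GR^{'}_i$ and $\GR_A$, for instance $\GR_A\exp(\chi(s_i)\log(\GR_A^T\GR^{'}_i(s_i)))$ when $\GR_A^T\GR^{'}_i$ stays close to $I$, otherwise a fixed $SO(3)$-valued bridge (this last case only occurs when the right-hand sides of \eqref{Lem1}--\eqref{Lem2} are of unit order, so the inequalities are automatic). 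Outside the overlap strips set $({\cal V}_i,\GR_i)=({\cal V}^{'}_i,\GR^{'}_i)$, and on each ${\cal J}_{A,\rho_0\delta}$ impose the translation-rotation prescription \eqref{defelemJ}. By construction ${\cal V}_i(a^k_i)=\Ga_A$ and $\GR_i(a^k_i)=\GR_A$ for every rod meeting $A$.

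The estimates are then routine bookkeeping. Outside the overlap strips \eqref{Lem1} reduces to \eqref{DCP}; inside, the extra terms generated by $\chi^{'}$ carry a factor $1/\delta$ that is exactly absorbed by the comparison estimates of Step 2. The $L^2$ bounds \eqref{Lem2} on $\overline{v}=v-v_e$ follow from \eqref{DCP} on each rod portion (the cost of the strip modification, whose support has volume $O(\delta^3)$, is again controlled by those comparison estimates), and from \eqref{DecN} on each junction together with ${\cal V}(A)=\Ga_A$, $\GR(A)=\GR_A$. The main obstacle is the construction of an $SO(3)$-valued $\GR_i$ on the transition strip whose derivative matches the target scaling $C\|\hbox{dist}\|_{L^2}/\delta^2$: linear convex combinations leave $SO(3)$, and one must either project (which is smooth only in a tubular neighborhood of $SO(3)$) or use an exponential representation; the cross-section-averaged comparison bound of Step 2 is precisely what keeps us in that good regime.
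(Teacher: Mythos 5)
Your proposal follows essentially the same route as the paper: the per-rod decomposition \eqref{Decomp}--\eqref{DCP} combined with the junction rigidity estimate \eqref{DecN}, comparison of the two descriptions on the overlap strips around each knot, then matching of $({\cal V}^{'}_i,\GR^{'}_i)$ to the constant translation-rotation $({\bf a}_A,\GR_A)$ on transition intervals of length $\delta$ via a linear interpolation for ${\cal V}_i$ and an $SO(3)$-valued interpolation for $\GR_i$, followed by the same bookkeeping for \eqref{Lem1}--\eqref{Lem2}. The only cosmetic difference is that the paper (following the appendix of \cite{BG2}) interpolates between the two endpoint rotations $\GR_A$ and $\GR^{'}_i(a^k_i\pm(\rho_0+1)\delta)$ rather than pointwise with $\GR^{'}_i(s_i)$, which avoids the cut-locus issue you patch with the crude bridge; both constructions give the derivative bound \eqref{GRI} with the same scaling.
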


\begin{proof}

\noindent{\it Step 1. } In this step we compare the two decompositions of $v$ given in Subsections 3.1 and 3.2 in ${\cal J}_{A, (\rho_0+1)\delta}\cap {\cal P}_{i,\delta}$.

Let $A=\varphi_i(a_i^k)$ be a knot in ${\cal P}_{i,\delta}$. Using the last estimate of \eqref{DCP} and the first one in \eqref{DecN} we obtain
\begin{equation}\label{RR_A}
\begin{aligned}
\bigl\|\GR_A-\GR^{'}_i \big\|_{L^2(]a^k_i-(\rho_0+1)\delta,a^k_i+(\rho_0+1)\delta[\cap ]0,L_i[ ; \R^{3\times 3})}\le {C\over \delta}\big(&||\hbox{dist}(\nabla_x v,SO(3))||_{L^2({\cal P}_{i,\delta})}\\
+||\hbox{dist}(&\nabla v, SO(3))||_{L^2({\cal J}_{A, (\rho_0+1)\delta})}\big).
\end{aligned}
\end{equation}
Now  the decomposition \eqref{Decomp}, the  first estimate in \eqref{DCP} and  the last estimate \eqref{DecN} lead to 
\begin{equation*}
\begin{aligned}
||{\cal V}^{'}_i +\GR^{'}_i\big(y_2\Gn_i+y_3\Gb_i\big)-{\bf R}_{A}\big((s_i-a^k_i)\Gt_i+y_2\Gn_i+y_3\Gb_i\big)-{\bf a}_{A}||_{L^2(\Omega_{i,a^k_i,(\rho_0+1)\delta}\cap\Omega_{i,\delta} ; \R^3)}\\ 
\le C\delta\big(||\hbox{dist}(\nabla_x v,SO(3))||_{L^2({\cal P}_{i,\delta})}+||\hbox{dist}(\nabla v, SO(3))||_{L^2({\cal J}_{A, (\rho_0+1)\delta})}\big)
\end{aligned}
\end{equation*}
which in turn with \eqref{RR_A} gives
\begin{equation}\label{Va}
\begin{aligned}
&||{\cal V}^{'}_i-{\bf a}_A -(s_i-a^k_i){\bf R}_{A}\Gt_i||_{L^2(]a^k_i-(\rho_0+1)\delta,a^k_i+(\rho_0+1)\delta[\cap]0,L_i[ ; \R^3)}\\ 
\le &C\big(||\hbox{dist}(\nabla_x v,SO(3))||_{L^2({\cal P}_{i,\delta})}+||\hbox{dist}(\nabla v, SO(3))||_{L^2({\cal J}_{A, (\rho_0+1)\delta})}\big).
\end{aligned}
\end{equation}

\noindent{\it Step 2. } Here we construct  $v_e$. 

We first define a  rotation field $\GR\in H^1({\cal S} ; SO(3))$ which is constant and equal to $\GR_A$ given by \eqref{DecN} in the adequate neighborhood of the  knot $A$ and which is close to $\GR^{'}_i$ on each segment $\gamma_i$. Let  $i\in \{1,\ldots, N\}$ be fixed. For all $k\in\{1,\ldots,K_i\}$ (i.e. all the knots of the line $\gamma_i$) we first set 
\begin{equation}\label{GR1}
\GR_i(s_i)=\GR_{A}\qquad A=\varphi_i(a^k_i)\in {\cal K}\cap \gamma_i,\quad s_i\in ]a^k_i-\rho_0\delta,a^k_i+\rho_0\delta[,
\end{equation}
 and then ''far from the knots'' \begin{equation}\label{GR2}
\GR_i(s_i)=\GR^{'}_i (s_i)\qquad s_i\in ]0,L_i[\setminus \bigcup_{k=1}^{K_i}]a^k_i-(\rho_0+1)\delta,a^k_i+(\rho_0+1)\delta[.
\end{equation} It remains to define $\GR_i$ in the  intervals 
$$I^k_{+}=]0,L_i[\cap ]a^k_i+\rho_0\delta,a^k_i+(\rho_0+1)\delta[\enskip\hbox{and}\enskip I^k_{-}=]0,L_i[\cap ]a^k_i-(\rho_0+1)\delta,a^k_i-\rho_0\delta[,\enskip k\in\{1,\ldots,K_i\}.$$ In 
%$]0,L_i[\cap ]a^k_i+\rho_0\delta,a^k_i+(\rho_0+1)\delta[$,
these intervals, we proceed as in Appendix of \cite{BG2} in order to construct the field $\GR_i$ by a sort of interpolation in $SO(3)$, from $\GR_A$ to $\GR^{'}_i(a^k_i+(\rho_0+1)\delta)$ and from $\GR_A$ to $\GR^{'}_i(a^k_i-(\rho_0+1)\delta)$. This construction satisfies 
\begin{equation}\label{GRI}
\begin{aligned}
\Big\|{d\GR_i\over ds_i}\Big\|^2_{L^2(I^k_+)^{3\times 3}}&\le {C\over \delta}|||\GR^{'}_i(a^k_i+(\rho_0+1)\delta)-\GR_A|||^2,\\
\Big\|{d\GR_i\over ds_i}\Big\|^2_{L^2(I^k_-)^{3\times 3}}&\le {C\over \delta}|||\GR_A-\GR^{'}_i(a^k_i-(\rho_0+1)\delta)|||^2.\\
\end{aligned}
\end{equation} The constant does not depend on $\delta$. From the above construction we obtain a field $\GR=\big(\GR_1,\ldots, \GR_N\big)$ defined on ${\cal S}$  and which  belongs to $H^1({\cal S} ; SO(3))$. 
\vskip 2mm
Let us now define the field ${\cal V}\in H^1({\cal S} ; \R^3)$ of $v_e$. We proceed as for $\GR$. Let  $i\in \{1,\ldots, N\}$ be fixed. For all $k\in\{1,\ldots,K_i\}$  we first set 
\begin{equation}\label{DefV1}
\begin{aligned}
{\cal V}_i(s_i)=&{\bf a}_{A}+(s_i-a^k_i){\bf R}_{A}\Gt_i\qquad A=\varphi_i(a^k_i)\in {\cal K}\cap \gamma_i,\\&\quad\hbox{for}\enskip s_i\in ]a^k_i-\rho_0\delta,a^k_i+\rho_0\delta[,
\end{aligned}
\end{equation} while ''far from the knots'' we set 
\begin{equation}\label{DefV2}
{\cal V}_i(s_i)={\cal V}^{'}_i (s_i)\qquad s_i\in ]0,L_i[\setminus \bigcup_{k=1}^{K_i}]a^k_i-(\rho_0+1)\delta,a^k_i+(\rho_0+1)\delta[.
\end{equation}

At least in the remaining intervals $I^k_+$ and $I^k_-$, we just perform a linear interpolation   
\begin{equation}\label{cal V}
\begin{aligned}
{\cal V}_i(s_i)&={a_i^k+\rho_0\delta-s_i\over \delta}{\cal V}^{'}_i (s_i)+\Big(1-{a_i^k+\rho_0\delta-s_i\over \delta}\Big)\Big({\bf a}_{A}+(s_i-a^k_i){\bf R}_{A}\Gt_i\Big)\\
&\quad \hbox{for}\enskip s_i\in I^k_+,\\
{\cal V}_i(s_i)&={a_i^k-\rho_0\delta-s_i\over \delta}{\cal V}^{'}_i (s_i)+\Big(1-{a_i^k-\rho_0\delta-s_i\over \delta}\Big)\Big({\bf a}_{A}+(s_i-a^k_i){\bf R}_{A}\Gt_i\Big)\\
&\quad \hbox{for}\enskip   s_i\in I^k_-,\qquad  k\in\{1,\ldots,K_i\}.
\end{aligned}
\end{equation} Gathering \eqref{DefV1},   \eqref{DefV2} and  \eqref{cal V}, we obtain a field ${\cal V}=\big({\cal V}_1,\ldots, {\cal V}_N\big)$ defined on  ${\cal S}$ which  belongs to $H^1({\cal S} ; \R^3)$. It worth noting that $({\cal V},\GR)$ verify the condition \eqref {defelemJ}. As a consequence we  can define the elementary deformation $v_e$ associated to ${\cal V}$ and $\GR$ through Definition \ref{DEF2} and it belongs to $H^1({\cal S}_\delta;\R^3)$.

\noindent{\it Step 3. } Comparison between $({\cal V}_i,\GR_i)$ and $({\cal V}_i', \GR_i')$.

Using the third estimate  in \eqref{DCP}, \eqref{RR_A} and \eqref{GRI} we obtain
\begin{equation*}
\begin{aligned}
&\bigl\|\GR_i-\GR^{'}_i \big\|_{L^2(I^k_{+}\cup I^k_{-} ; \R^{3\times 3})}+\delta\Bigl\|{d\GR_i\over ds_i}\Big\|_{L^2(I^k_{+}\cup I^k_{-}; \R^{3\times 3})}\\
\le & {C\over \delta}\big(||\hbox{dist}(\nabla_x v,SO(3))||_{L^2({\cal P}_{i,\delta})}
+||\hbox{dist}(\nabla v, SO(3))||_{L^2({\cal J}_{A, (\rho_0+1)\delta})}\big).
\end{aligned}
\end{equation*} Taking into account the definition of $\GR_i$, for all $i\in\{1,\ldots,N\}$ we finally get 
\begin{equation}\label{EstGR}
\begin{aligned}
&\bigl\|\GR_i-\GR^{'}_i \big\|_{L^2(0,L_i ; \R^{3\times 3})}+\delta\Bigl\|{d\GR_i\over ds_i}\Big\|_{L^2(0, L_i ; \R^{3\times 3})}\\
\le & {C\over \delta}\big(||\hbox{dist}(\nabla_x v,SO(3))||_{L^2({\cal P}_{i,\delta})}
+\sum_{A\in \gamma_i}||\hbox{dist}(\nabla v, SO(3))||_{L^2({\cal J}_{A, (\rho_0+1)\delta})}\big).
\end{aligned}\end{equation} The constant does not depend on $\delta$ and $L_i$.

From \eqref{Va} and the definition of ${\cal V}_i$ (see \eqref{DefV1}, \eqref{DefV2} and \eqref{cal V}) we deduce that
\begin{equation}\label{Va2}
\begin{aligned}
||{\cal V}^{'}_i -{\cal V}_i||_{L^2(0,L_i ; \R^3)}\le &C\big(||\hbox{dist}(\nabla_x v,SO(3))||_{L^2({\cal P}_{i,\delta})}\\
+& \sum_{A\in \gamma_i}||\hbox{dist}(\nabla v, SO(3))||_{L^2({\cal J}_{A, (\rho_0+1)\delta})}\big).
\end{aligned}
\end{equation}  Now, taking into account the fourth estimate in \eqref{DCP}, estimates  \eqref{RR_A}-\eqref{Va} and again the definition of ${\cal V}_i$ we get
\begin{equation}\label{EDVmoinsVp}
\begin{aligned}
\Bigl\|{d{\cal V}_i\over ds_i}-{d{\cal V}^{'}_i\over ds_i}\Big\|_{L^2(0,L_i ; \R^3)}\le &{C\over \delta}\big(||\hbox{dist}(\nabla_x v,SO(3))||_{L^2({\cal P}_{i,\delta})}\\
&+\sum_{A\in \gamma_i}||\hbox{dist}(\nabla v, SO(3))||_{L^2({\cal J}_{A, (\rho_0+1)\delta})}\big).
\end{aligned}\end{equation}  where the constant $C$ does not depend on $\delta$ and $L_i$.

\noindent{\it Step 4. }  
First of all, we prove the estimates \eqref{Lem1}. The first one in \eqref{Lem1} is a direct consequence of \eqref{EstGR} and the fact that the matrices $\GR_i$ are constants in the neighborhood of the knots (intervals $I^k_\pm$). The second one comes from the fourth estimate in  \eqref{DCP},  again \eqref{EstGR} and \eqref{EDVmoinsVp}. Then, from \eqref{DCP} and \eqref{EstGR} we deduce the last estimate in   \eqref{Lem1}.

Now, let us set $\overline{v}=v-v_e$ where $v_e$ is defined in Step 2.  From the decomposition \eqref{Decomp} and the expression of  $v_e$ in Definition \ref{DEF2}, we have for all $i\in\{1,\ldots,N\}$ 
\begin{equation*}
%\label{Lem3}
\begin{aligned}
& v(s)={\cal V}^{'}_i(s_i)+\GR^{'}_i(s_i)\bigl(y_2\Gn_i +y_3 \Gb_i\bigr)+\overline{v}^{'}_i(s),\\
& v(s)={\cal V}_i(s_i)+\GR_i(s_i)\bigl(y_2\Gn_i +y_3 \Gb_i\bigr)+\overline{v}_i(s)=v_{e,i}(s)+\overline{v}_i(s),\end{aligned}
\qquad s=(s_i,y_2,y_3)\in\Omega_{i,\delta}.
\end{equation*} Then, using \eqref{EstGR} and \eqref{Va2} it leads to
\begin{equation*}
\begin{aligned}
||\overline{v}_i-\overline{v}^{'}_i||_{L^2(\Omega_{i,\delta} ; \R^3)}\le &C\delta \big(||\hbox{dist}(\nabla_x v,SO(3))||_{L^2({\cal P}_{i,\delta})}\\
+& \sum_{A\in \gamma_i}||\hbox{dist}(\nabla v, SO(3))||_{L^2({\cal J}_{A, (\rho_0+1)\delta})}\big).
\end{aligned}
\end{equation*} Hence, due to the first estimate in \eqref{DCP} and the above inequalities we get
\begin{equation*}
\sum_{i=1}^N||\overline{v}_i||_{L^2(\Omega_{i,\delta} ; \R^3)}\le C\delta \big(||\hbox{dist}(\nabla_x v,SO(3))||_{L^2({\cal S}_{i\delta})}.
\end{equation*} 
Now, in order to take into account the knots in $\Gamma_{\cal K}$, we use \eqref{DecN} and the definition of $v_e$ in the junction ${\cal J}_{A, \rho_0\delta}$  (\eqref{GR1} and \eqref{DefV1}) to obtain
\begin{equation*}
\begin{aligned}
||v-v_e||_{L^2(B(A ; \delta) ; \R^{3})}\le ||v-v_e||_{L^2({\cal J}_{A, \rho_0\delta} ; \R^{3})}\le C\delta||\hbox{dist}(\nabla v, SO(3))||_{L^2({\cal J}_{A, (\rho_0+1)\delta})}.\end{aligned}
\end{equation*} Finally due to the definition of ${\cal S}_\delta$  we deduce the first estimate in \eqref{Lem2}.

\noindent An easy calculation gives in $\Omega_{i,\delta}$, $i=1,\ldots,N$
\begin{equation}\label{GradVE}
\nabla_x v_e\Gt_i={d{\cal V}_i\over ds_i}+{d\GR_i\over ds_i}(y_2\Gn_i+y_3\Gb_i),\qquad \nabla_x v_e\Gn_i=\GR_i\Gn_i,\qquad \nabla_x v_e\Gb_i=\GR_i\Gb_i.
\end{equation} Then,  from the first estimate in \eqref{DecN} and all those in  \eqref{Lem1}  we obtain the   estimate of $\nabla\overline{v}$.
\end{proof}

\section {A Korn's type inequality for the rod-structure.}

This section is devoted to derive a nonlinear Korn's type inequality for ${\cal S}_\delta$. We assume that the structure ${\cal S}_\delta$ is clamped on a few extremities whose set is denoted by  $\Gamma_0^\delta$, corresponding to a set $\Gamma_0$ of extremities of ${\cal S}$.
 Then  we set 
$$\D_\delta = \{v\in H^1({\cal S}_\delta; \R^3)\; |\; v=I_d \enskip\hbox{on} \enskip \Gamma_0^\delta\}.$$
Let $v$ be in $\D_\delta$. Proceeding as in \cite{BG2} for each rod, we can choose $v_e$ in Theorem \ref{The3.3} such that 
\begin {equation}\label{clv_e}
v_e=I_d\quad \hbox{on}\quad \Gamma_0^\delta.
\end{equation}
Notice that \eqref{clv_e} implies that 
\begin {equation}\label{clvbar}
\overline{v}=0\enskip \hbox{on}\enskip \Gamma_0^\delta,\qquad\quad{\cal V}=\phi,\enskip \GR_i=\GI_3 \enskip \hbox{on}\enskip \Gamma_0\hbox{ for } i=1,\ldots,N.
\end{equation}
\begin{theorem}\label{Korn} There exists a constant $C$ which depends only on ${\cal S}$ such that, for all deformation $v\in \D_\delta$ 
\begin {equation}\label{EstmKorn}
 ||v-I_d||_{H^1({\cal S}_\delta ; \R^{3})}\le   {C\over \delta}||\hbox{dist}(\nabla_x v,SO(3))||_{L^2({\cal S}_{\delta})}.
\end{equation}
Moreover
\begin {equation}\label{EstmKornN}
\begin{aligned}
\sum_{A\in {\cal K}}\; \; ||{\cal V}(A)-A||_2 \le {C\over \delta^2}||\hbox{dist}(\nabla_x v,SO(3))||_{L^2({\cal S}_{\delta})},\\
\sum_{A\in {\cal K}}\; \; |||\GR(A)-\GI_3||| \le {C\over \delta^2}||\hbox{dist}(\nabla_x v,SO(3))||_{L^2({\cal S}_{\delta})}.
\end{aligned}
\end{equation}
\end{theorem}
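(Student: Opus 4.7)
The plan is to combine Theorem \ref{The3.3} with the clamping conditions via Poincaré-type arguments on the one-dimensional graph $\mathcal S$. Set $\eta=\|\hbox{dist}(\nabla_x v,SO(3))\|_{L^2(\mathcal S_\delta)}$. I first apply Theorem \ref{The3.3} to get $v_e=v_e(\mathcal V,\mathbf R)$ satisfying \eqref{Lem1}--\eqref{Lem2}, with the additional choice \eqref{clv_e} so that $\mathcal V=\varphi$ and $\mathbf R_i=\mathbf I_3$ on $\Gamma_0$ by \eqref{clvbar}.

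The first step is to upgrade the bound on $\tfrac{d\mathbf R}{ds}$ from \eqref{Lem1} into an $L^\infty(\mathcal S)$ bound on $\mathbf R-\mathbf I_3$. Since $\mathcal S$ is a finite connected union of segments and $\mathbf R\in H^1(\mathcal S;SO(3))$ vanishes (as $\mathbf R-\mathbf I_3$) at some extremity in $\Gamma_0$, I can chain the one-dimensional Poincaré inequality along any path in $\mathcal S$ from $\Gamma_0$ to the point of interest; the resulting constant depends only on the geometry of $\mathcal S$. This yields
$$\|\mathbf R-\mathbf I_3\|_{L^\infty(\mathcal S;\R^{3\times3})}+\|\mathbf R-\mathbf I_3\|_{L^2(\mathcal S;\R^{3\times3})}\le C\Bigl\|\frac{d\mathbf R}{ds}\Bigr\|_{L^2(\mathcal S)}\le\frac{C}{\delta^2}\eta,$$
and since the number of knots is finite this gives the second half of \eqref{EstmKornN}. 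For $\mathcal V-\varphi$, I write $\tfrac{d(\mathcal V_i-\varphi_i)}{ds_i}=\bigl(\tfrac{d\mathcal V_i}{ds_i}-\mathbf R_i\mathbf t_i\bigr)+(\mathbf R_i-\mathbf I_3)\mathbf t_i$ and combine the second estimate of \eqref{Lem1} with the $L^2(\mathcal S)$ bound just obtained to get $\|\tfrac{d(\mathcal V-\varphi)}{ds}\|_{L^2(\mathcal S)}\le\tfrac{C}{\delta^2}\eta$. A second Poincaré argument on $\mathcal S$, using $\mathcal V=\varphi$ on $\Gamma_0$, then gives $\|\mathcal V-\varphi\|_{L^\infty(\mathcal S)}\le\tfrac{C}{\delta^2}\eta$, which evaluated at each knot is the first half of \eqref{EstmKornN}.

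For \eqref{EstmKorn} I decompose $v-I_d=(v_e-I_d)+\overline v$. The bounds on $\overline v$ given by \eqref{Lem2} already yield $\|\overline v\|_{H^1(\mathcal S_\delta)}\le C\eta$. For $v_e-I_d$ on a rod, I use $I_d(s)=\varphi_i(s_i)+y_2\mathbf n_i+y_3\mathbf b_i$ and Definition \ref{DEF2} to write
$$v_e(s)-I_d(s)=\bigl(\mathcal V_i(s_i)-\varphi_i(s_i)\bigr)+\bigl(\mathbf R_i(s_i)-\mathbf I_3\bigr)(y_2\mathbf n_i+y_3\mathbf b_i),\qquad s\in\Omega_{i,\delta}.$$
Integration over the cross section $\omega_\delta$ (area $\sim\delta^2$) turns the $L^2(0,L_i)$ bound $\tfrac{C}{\delta^2}\eta$ on $\mathcal V_i-\varphi_i$ into $\tfrac{C}{\delta}\eta$ in $L^2(\mathcal P_{i,\delta})$, while the rotation term picks up an extra $\delta$ from $|y_2\mathbf n_i+y_3\mathbf b_i|\le\delta$ and is controlled by $C\eta$. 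For the gradient I use \eqref{GradVE}, so $\nabla v_e-\mathbf I_3$ has the three components $\tfrac{d\mathcal V_i}{ds_i}-\mathbf t_i+\tfrac{d\mathbf R_i}{ds_i}(y_2\mathbf n_i+y_3\mathbf b_i)$, $(\mathbf R_i-\mathbf I_3)\mathbf n_i$, $(\mathbf R_i-\mathbf I_3)\mathbf b_i$; writing $\tfrac{d\mathcal V_i}{ds_i}-\mathbf t_i=(\tfrac{d\mathcal V_i}{ds_i}-\mathbf R_i\mathbf t_i)+(\mathbf R_i-\mathbf I_3)\mathbf t_i$ and invoking \eqref{Lem1} together with the $L^2(\mathcal S)$ control of $\mathbf R-\mathbf I_3$, integration over $\omega_\delta$ gives $\|\nabla v_e-\mathbf I_3\|_{L^2(\mathcal P_{i,\delta})}\le\tfrac{C}{\delta}\eta$. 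On each junction $v_e$ is the affine map $\mathcal V(A)+\mathbf R(A)(x-A)$, so $v_e-I_d$ and $\nabla v_e-\mathbf I_3$ are estimated directly in $L^\infty(\mathcal J_{A,\rho_0\delta})$ by $\tfrac{C}{\delta^2}\eta$ and multiplied by $|\mathcal J|^{1/2}=O(\delta^{3/2})$, giving a contribution at worst $\tfrac{C}{\delta^{1/2}}\eta$. Summing over rods and junctions yields \eqref{EstmKorn}.

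The main obstacle is bookkeeping: ensuring that the two Poincaré constants on the graph $\mathcal S$ depend only on the geometry of $\mathcal S$ (not on $\delta$) and tracking the $\delta$-powers when lifting 1D estimates on $\mathcal V$, $\mathbf R$ to 3D estimates on $\mathcal S_\delta$. The characteristic loss of exactly one power of $\delta$ in \eqref{EstmKorn} compared with the two powers in \eqref{EstmKornN} reflects the $\delta$ gained by integration over the disc cross section $\omega_\delta$ of area $\sim\delta^2$.
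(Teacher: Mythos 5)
Your proof is correct and follows essentially the same route as the paper: Poincar\'e-type inequalities on the connected skeleton ${\cal S}$ (using the clamping on $\Gamma_0$ and the continuity of ${\cal V}$ and $\GR$ at the knots) upgrade the estimates \eqref{Lem1} to bounds on $\GR-\GI_3$ and ${\cal V}-\varphi$, which are then evaluated at the knots and lifted to ${\cal S}_\delta$. The only cosmetic difference is that for \eqref{EstmKorn} the paper bounds $\nabla_x v-\GI_3$ in $L^2({\cal S}_\delta)$ and applies a Poincar\'e inequality to $v-I_d$ on ${\cal S}_\delta$, while you estimate $v_e-I_d$ directly from the one-dimensional bounds and add $\overline v$; both give the same powers of $\delta$.
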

\begin{proof}
Taking into account the connexity of ${\cal S}$ and the continuous character of $\GR$, the boundary condition \ref{clvbar} implies that 
\begin {equation*}
\sum_{i=1}^N||\GR_i-\GI_3||_{L^2(0,L_i ; \R^{3\times 3})}\le C\sum_{i=1}^N\Bigl\|{d\GR_i\over ds_i}\Big\|_{L^2(0,L_i ; \R^{3\times 3})}.
\end{equation*} Then, the first estimate in \eqref{Lem1} gives
\begin {equation}\label{EstRL2}
\sum_{i=1}^N||\GR_i-\GI_3||_{L^2(0,L_i ; \R^{3\times 3})}\le  {C\over \delta^2}||\hbox{dist}(\nabla_x v,SO(3))||_{L^2({\cal S}_{\delta})}.
\end{equation}
From \eqref{EstRL2}, the last estimate in \eqref{Lem1}, the boundary condition on $v$ together with Poincar\' e inequality we deduce that \ref{EstmKorn} holds true.
Now estimate \eqref{EstRL2} and the second one in \eqref{Lem1} imply  that
\begin {equation}\label{EstdVL2}
 \sum_{i=1}^N\Bigl\|{d{\cal V}_i\over ds_i}-\Gt_i\Big\|_{L^2(0,L_i ; \R^{3})}\le   {C\over \delta^2}||\hbox{dist}(\nabla_x v,SO(3))||_{L^2({\cal S}_{\delta})}.
\end{equation} Hence the boundary condition \eqref{clvbar} leads to
\begin {equation}\label{EstVL2}
||{\cal V}-\varphi||_{H^1({\cal S} ; \R^{3})}\le  {C\over \delta^2}||\hbox{dist}(\nabla_x v,SO(3))||_{L^2({\cal S}_{\delta})}
\end{equation} from which the first estimate in \eqref{EstmKornN} follows. At last the last estimate in \eqref{EstmKornN} comes from \eqref{Lem1} and the boundary condition on $\GR$.
The constants depend only on the skeleton ${\cal S}$.  
\end{proof}
\begin{remark} Since $\GR$ is bounded, we also have (for $i=1,\ldots,N$)
\begin {equation}\label{EstR3}
||\GR_i-\GI_3||_{L^2(0,L_i ; \R^{3\times 3})}\le C. \end{equation}  From the above estimate and proceeding as in the proof of Theorem \ref{Korn} we obtain
\begin {equation}\label{EstmKorn2}
 ||v-I_d||_{H^1({\cal S}_\delta ; \R^{3})}\le  C\Big\{\delta+ ||\hbox{dist}(\nabla_x v,SO(3))||_{L^2({\cal S}_{\delta})}\Big\},
\end{equation} 
and 
\begin {equation}\label{EstmKornN2}
\begin{aligned}
&\sum_{A\in {\cal K}}\; \; ||{\cal V}(A)-A||_2 \le C\Big\{1+{1\over \delta} ||\hbox{dist}(\nabla_x v,SO(3))||_{L^2({\cal S}_{\delta})}\Big\},\\
&\sum_{A\in {\cal K}}\; \; |||\GR(A)-\GI_3||| \le C.
\end{aligned}
\end{equation}

\end{remark}
\begin{remark}\label{pourforce}
From \eqref{Va2} and   estimate (II.3.9) in \cite{BG2} and the fact that $(v-I_d)-({\cal V}_{i}-\varphi_i)=(\GR_i-\GI_3)(y_2\Gn_i+y_3\Gb_i)+\overline{v}_i$ in each rod ${\cal P}_{i,\delta}$, we obtain
$$||(v-I_d)-({\cal V}_i-\varphi_i)||_{L^2({\cal P}_{i,\delta} ; \R^3)}\le  C ||\hbox{dist}(\nabla_x v,SO(3))||_{L^2({\cal P}_{i,\delta})}.$$
Due to \eqref{EstRL2} and the estimate (II.3.10) in \cite{BG2},  we also have
\begin {equation}\label{tenslin}
\begin{aligned}
\bigl\|\nabla_x v+(\nabla_x v)^T-2\GI_3\big\|_{L^2({\cal S}_\delta ; \R^{3\times 3})}\le& C||\hbox{dist}(\nabla_x v,SO(3))||_{L^2({\cal S}_\delta)}\\
+&{C\over \delta^3}||\hbox{dist}(\nabla_x v,SO(3))||^2_{L^2({\cal S}_\delta)},
\end{aligned}
\end{equation}
which in turn with \eqref{Lem1} give
\begin{equation}\label{SymPartR}
\begin{aligned}
\sum_{i=1}^N||\GR_i^T+\GR_i-2\GI_3||_{L^2(0,L_i ; \R^{3\times 3})}\le & {C\over \delta}||\hbox{dist}(\nabla_x v,SO(3))||_{L^2({\cal S}_\delta)}\\
 + & {C\over \delta^4}||\hbox{dist}(\nabla_x v,SO(3))||^2_{L^2({\cal S}_\delta)}.
\end{aligned}
\end{equation}
\end{remark}
\section    {Rescaling of $\Omega_{i,\delta}$ for $i=1,\ldots,N$.}

\noindent We recall that $\Omega_i=]0,L_i[\times \omega$. We rescale $\Omega_{i,\delta}$ using the operator 
$$\Pi_{i,\delta} (\psi)(s_i,Y_2,Y_3)=\psi(s_i,\delta Y_2,\delta Y_3)\quad\hbox{ for a.e.}\quad (s_i,Y_2,Y_3) \in \Omega_i$$
defined for any measurable function $\psi$ over $\Omega_{i,\delta}$. Indeed, if $\psi\in L^2(\Omega_{i,\delta})$ then $\Pi_{i,\delta} (\psi)\in  L^2(\Omega_i)$.
The estimates \eqref{Lem2} of $\overline{v}$   transposed over $\Omega_i$ are 
\begin {equation}\label{estpivbar}
\begin{aligned}
||\Pi_{i,\delta}( \overline{v})||_{L^2(\Omega_i ; \R^3)}\le C ||\hbox{dist}(\nabla_x v,SO(3))||_{L^2({\cal S}_\delta)},\\
\Big\|{\partial \Pi_{i,\delta}(\overline {v})\over \partial Y_2}\Big\|_{L^2(\Omega_i ; \R^3)}\le C ||\hbox{dist}(\nabla_x v,SO(3))||_{L^2({\cal S}_\delta)},\\ 
\Big\|{\partial \Pi_{i,\delta}(\overline{v})\over \partial Y_3}\Big\|_{L^2(\Omega_i ; \R^3)}\le C ||\hbox{dist}(\nabla_x v,SO(3))||_{L^2({\cal S}_\delta)}, \\ 
\Big\|{\partial \Pi_{i,\delta}(\overline{v})\over \partial s_i}\Big\|_{L^2(\Omega_i ; \R^3)}\le {C \over \delta} ||\hbox{dist}(\nabla_x v,SO(3))||_{L^2({\cal S}_\delta)}.\\
\end{aligned}
\end{equation}
\section{Asymptotic behavior of the Green-St Venant's strain tensor.}

We distinguish three main cases for the behavior of the quantity  $||\hbox{dist}(\nabla_x v,SO(3))||_{L^2({\cal S}_\delta)}$ which plays an important role in the derivation of estimates in nonlinear elasticity
\begin{equation*}
||\hbox{dist}(\nabla_x v,SO(3))||_{L^2({\cal S}_\delta)}=\left\{\begin{aligned}
&O(\delta^{\kappa}),\qquad 1\le \kappa<2,\\
&O(\delta^2),\\
&O(\delta^{\kappa}),\qquad \kappa>2.
\end{aligned}
\right.
\end{equation*}
This hierarchy of behavior for $||\hbox{dist}(\nabla_x v,SO(3))||_{L^2({\cal P}_\delta)}$ has already been observed in \cite{BG2}.  

In this section we investigate the behavior of a sequence $(v_\delta)_\delta$ of $\D_\delta $ for which  $||\hbox{dist}(\nabla_x v_\delta,SO(3))||_{L^2({\cal S}_\delta)}=O(\delta^{\kappa})$ for $1<\kappa\le 2$.   In Subsection \ref{Subcase1} we analyse the case $1<\kappa<2$ and   Subsection \ref{Subcase2} deals with the case $\kappa=2$. Let us emphasize that we explicit the limit of the Green-St Venant's tensor in each case. 
%Moreover, Subsection \ref{Subcase3} gives a few comparisons with the linearized deformations.
\subsection{Case  $1<\kappa<2$. Limit  behavior for  a sequence such that  $||\hbox{dist}(\nabla_x v_\delta,SO(3))||_{L^2({\cal S}_\delta)} \sim \delta^\kappa$.}\label{Subcase1}
 
Let us consider a sequence of deformations $(v_\delta)_\delta$ of $\D_\delta $  such that for $1<\kappa<2$
\begin{equation*}
||\hbox{dist}(\nabla_x v_\delta,SO(3))||_{L^2({\cal S}_\delta)} \le C \delta^\kappa.
\end{equation*}
We denote by ${\cal V}_\delta$, $\GR_\delta$ and $\overline{v}_\delta$ the three terms of the decomposition of $v_\delta$ given by Theorem \ref{The3.3}.  The two first estimates of \eqref{Lem1} and those in \eqref{estpivbar}  lead to the following lemma:
\begin{lemma}\label{case1}
There exists a subsequence still indexed by $\delta$ such that 
\begin{equation}\begin{aligned}\label{weakconv1}
&\GR_\delta \rightharpoonup \GR \quad \hbox{weakly-* in}\quad  L^\infty({\cal S} ; \R^{3\times 3}),\\
&{1\over \delta ^{\kappa-2}}\GR_\delta \rightharpoonup  0 \quad \hbox{weakly in}\quad H^1({\cal S} ; \R^{3\times 3}),\\
&{\cal V}_\delta\rightharpoonup  {\cal V} \quad \hbox{weakly in}\quad  H^1({\cal S} ; \R^3),\\
&{1\over \delta^{\kappa-1}}\GR_{i,\delta}^T\big({d{\cal V}_{i,\delta}\over ds_i}-\GR_{i,\delta}\Gt_i\big)\rightharpoonup   {\cal Z}_i \quad \hbox{weakly  in}\quad  L^2(0,L_i ; \R^3),\\
&{1\over \delta^\kappa}\Pi_{i,\delta}\big(\GR_{i,\delta}^T\overline {v}_{\delta}\big) \rightharpoonup  \overline{w}_i \quad \hbox{weakly in}\quad  L^2(0,L_i;H^1(\omega ; \R^3)).
\end{aligned}
\end{equation}
Moreover the matrix $\GR_i(s_i)$ belongs to the convex hull of  $SO(3)$ for almost any $s_i \in ]0,L_i[$, $i\in\{1,\ldots, N\}$, ${\cal V}\in W^{1,\infty}({\cal S} ; \R^3) $ and they satisfy 
\begin{equation}\label{condlimi1}
{\cal V}=\phi \quad  \hbox{ on }\quad \Gamma_0 \qquad\hbox{and} \qquad{d{\cal V}_i \over ds_i}=\GR_i \Gt_i\qquad i\in\{1,\ldots, N\}.
\end{equation}
\noindent Furthermore, we also have
\begin{equation*}
\begin{aligned}
\Pi_{i,\delta}( v_\delta)&\rightharpoonup     {\cal V}_i \quad \hbox{weakly in}\quad H^1(\Omega_i ; \R^3),\\
\Pi_{i,\delta} (\nabla_x v_\delta)& \rightharpoonup  \GR_i\quad \hbox{weakly in}\quad L^2(\Omega_i ; \R^{3\times 3}).\\
\end{aligned}
\end{equation*}
\end{lemma}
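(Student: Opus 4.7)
The proof consists of extracting weakly convergent subsequences from the uniform bounds delivered by Theorem~\ref{The3.3}, Theorem~\ref{Korn}, and the rescaled estimates \eqref{estpivbar}, and then identifying the pointwise relations between the limits.

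First I assemble the convergences on the skeleton ${\cal S}$. Since $\GR_\delta(s_i)\in SO(3)$ a.e., the family $(\GR_\delta)$ is bounded in $L^\infty({\cal S};\R^{3\times 3})$ and a subsequence converges weak-$*$ to some $\GR$; by Lebesgue differentiation and convexity, $\GR(s_i)\in \overline{\hbox{conv}}(SO(3))$ almost everywhere. The assumption $\|\hbox{dist}(\nabla v_\delta,SO(3))\|_{L^2({\cal S}_\delta)}\le C\delta^\kappa$ combined with the first estimate of \eqref{Lem1} gives $\|d\GR_\delta/ds_i\|_{L^2}\le C\delta^{\kappa-2}$, so $\delta^{2-\kappa}\GR_\delta$ is bounded in $H^1({\cal S})$; since it also converges to $0$ uniformly (as $\kappa<2$ and $\|\GR_\delta\|_{L^\infty}$ is fixed), its weak $H^1$ limit must be $0$. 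The second estimate of \eqref{Lem1} yields $\|d{\cal V}_{i,\delta}/ds_i-\GR_{i,\delta}\Gt_i\|_{L^2}\le C\delta^{\kappa-1}\to 0$; together with the boundary condition \eqref{clvbar} and the $L^\infty$ bound on $\GR_\delta$, this shows $({\cal V}_\delta)$ is bounded in $H^1({\cal S};\R^3)$ and yields the weak limit ${\cal V}$. The rescaled difference $\delta^{1-\kappa}\GR_{i,\delta}^T\bigl(d{\cal V}_{i,\delta}/ds_i-\GR_{i,\delta}\Gt_i\bigr)$ is bounded in $L^2(0,L_i;\R^3)$, providing ${\cal Z}_i$.

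For the residual, dividing \eqref{estpivbar} by $\delta^\kappa$ shows that $\delta^{-\kappa}\Pi_{i,\delta}(\bar v_\delta)$ and its $Y_2,Y_3$-derivatives are bounded in $L^2(\Omega_i;\R^3)$, which is exactly boundedness in $L^2(0,L_i;H^1(\omega;\R^3))$; multiplication by $\GR_{i,\delta}^T$ (depending only on $s_i$ and $L^\infty$-bounded) preserves this, giving $\overline{w}_i$ as a weak limit. To identify the constraints, passing to the weak $L^2$ limit in $d{\cal V}_{i,\delta}/ds_i=\GR_{i,\delta}\Gt_i+o_{L^2}(1)$ gives $d{\cal V}_i/ds_i=\GR_i\Gt_i$; since $\GR_i$ is bounded, this forces ${\cal V}\in W^{1,\infty}({\cal S};\R^3)$, and the boundary condition ${\cal V}=\varphi$ on $\Gamma_0$ carries over from \eqref{clvbar} by trace continuity of the weak $H^1$ convergence.

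Finally, for the last two convergences over $\Omega_i$, I decompose $v_\delta=v_{e,\delta}+\bar v_\delta$. The rescaled elementary part is
$$\Pi_{i,\delta}(v_{e,\delta})(s_i,Y_2,Y_3)={\cal V}_{i,\delta}(s_i)+\delta\,\GR_{i,\delta}(s_i)\bigl(Y_2\Gn_i+Y_3\Gb_i\bigr),$$
whose $Y$-derivatives are $O(\delta)$ in $L^\infty$ and whose $s_i$-derivative equals $d{\cal V}_{i,\delta}/ds_i+\delta\,(d\GR_{i,\delta}/ds_i)(Y_2\Gn_i+Y_3\Gb_i)$, bounded in $L^2(\Omega_i)$ by the first two bounds in \eqref{Lem1}. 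Together with $\Pi_{i,\delta}(\bar v_\delta)\to 0$ strongly in $L^2(\Omega_i;\R^3)$ and its $Y$-derivatives bounded in $L^2$, this shows $\Pi_{i,\delta}(v_\delta)$ is bounded in $H^1(\Omega_i;\R^3)$, with weak limit independent of $(Y_2,Y_3)$ and equal to ${\cal V}_i$. For the gradient, rescaling the last estimate in \eqref{Lem1} via $\|\Pi_{i,\delta}\psi\|_{L^2(\Omega_i)}=\delta^{-1}\|\psi\|_{L^2(\Omega_{i,\delta})}$ produces $\|\Pi_{i,\delta}(\nabla_x v_\delta)-\GR_{i,\delta}\|_{L^2(\Omega_i)}\le C\delta^{\kappa-1}\to 0$, and since $\GR_{i,\delta}\rightharpoonup\GR_i$ weakly in $L^2$, one concludes $\Pi_{i,\delta}(\nabla_x v_\delta)\rightharpoonup\GR_i$. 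The main subtlety is that $\delta^{-\kappa}\Pi_{i,\delta}(\bar v_\delta)$ is \emph{not} bounded in $H^1(\Omega_i)$ — its $\partial_{s_i}$-bound scales as $\delta^{-1}$ — but only in the anisotropic space $L^2(0,L_i;H^1(\omega;\R^3))$, which is precisely the space appearing in the statement.
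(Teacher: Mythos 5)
Your argument is correct and follows exactly the route the paper intends: the paper gives no detailed proof of this lemma, merely invoking the estimates \eqref{Lem1} (rescaled by $\|\hbox{dist}(\nabla_x v_\delta,SO(3))\|_{L^2({\cal S}_\delta)}\le C\delta^\kappa$) together with \eqref{estpivbar}, the clamping condition \eqref{clvbar} and the $SO(3)$-bound on $\GR_\delta$, which is precisely what you carry out, including the correct identification of the anisotropic space $L^2(0,L_i;H^1(\omega;\R^3))$ for the residual term and the passage to the limit in $d{\cal V}_{i,\delta}/ds_i=\GR_{i,\delta}\Gt_i+o_{L^2}(1)$. No gaps worth noting.
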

Now we proceed as in \cite{BG2} to derive the limit of the Green-St Venant tensor  as $\delta$ goes to $0$. We first recall that for any function $\psi\in H^1(\Omega_{i,\delta})$
$${\partial \psi\over \partial s_i}=\nabla_x \psi\;\Gt_i\qquad {\partial \psi\over \partial y_2}=\nabla_x \psi\;\Gn_i\qquad {\partial \psi\over \partial y_3}=\nabla_x \psi \Gb_i.$$
Then in view of Lemma \ref{case1} and of the above relation, we obtain 
\begin{equation}\label{PrTGSV}
\begin{aligned}
{1\over \delta^{\kappa-1}}\GR_{i,\delta}^T\big(\Pi_{i,\delta} (\nabla_x v_\delta)-\GR_{i,\delta}\big)\Gt_i& \rightharpoonup  {\cal Z}_i \quad \hbox{weakly in}\quad L^2(\Omega_i ; \R^3),\\
{1\over \delta^{\kappa-1}}\GR_{i,\delta}^T\big(\Pi_{i,\delta} (\nabla_x v_\delta)-\GR_{i,\delta}\big)\Gn_i& \rightharpoonup  {\partial\overline{w}_i\over \partial Y_2}\quad \hbox{weakly in}\quad L^2(\Omega_i ; \R^3),\\
{1\over \delta^{\kappa-1}}\GR_{i,\delta}^T\big(\Pi_{i,\delta} (\nabla_x v_\delta)-\GR_{i,\delta}\big)\Gb_i& \rightharpoonup  {\partial\overline{w}_i\over \partial Y_3}\quad \hbox{weakly in}\quad L^2(\Omega_i ; \R^3),\\
\end{aligned}
\end{equation} 
 The weak convergences in \eqref{PrTGSV} together with the relation 
$(\nabla_xv_\delta)^T\nabla_x v_\delta-\GI_3=(\nabla_xv_\delta-\GR_{i,\delta})^T\GR_{i,\delta}+(\GR_{i,\delta})^T(\nabla_x v_\delta-\GR_{i,\delta})+(\nabla_xv_\delta-\GR_{i,\delta})^T(\nabla_x v_\delta-\GR_{i,\delta})$ permit to obtain the limit of the Green-St Venant's tensor in the rescaled domain $\Omega_i$. We obtain
\begin{equation}\label{GSVcase1}
{1\over 2\delta^{\kappa-1}}\Pi_{i,\delta} \big((\nabla_xv_\delta)^T\nabla_x v_\delta-\GI_3\big)\rightharpoonup   \GE_i \qquad\hbox{weakly in}\quad L^1(\Omega_i ; \R^{3\times 3}),
\end{equation} with
\begin{equation}\label{GSVEcase1}
\begin{aligned}
\GE_i=&{1\over 2}\Big\{(\Gt_i\,|\, \Gn_i\,|\, \Gb_i) \Bigl({\cal Z}_i\,|\,{\partial\overline{w}_i\over \partial Y_2}\,|\, {\partial\overline{w}_i\over \partial Y_3}\Big)^T
+\Bigl({\cal Z}_i\,|\,{\partial\overline{w}_i\over \partial Y_2}\,|\, {\partial\overline{w}_i\over \partial Y_3}\Big)(\Gt_i\,|\, \Gn_i\,|\, \Gb_i)^T\Big\}.
\end{aligned}
\end{equation} The term $\big(\Ga\, |\, \Gb\, |\, \Gc\big)$ denotes the $3\times 3$ matrix with  columns   $\Ga$,  $\Gb$ and  $\Gc$.
\medskip
For any field $\overline{\psi}\in L^2(0, L_i ; H^1(\omega ; \R^3))$ we set
\begin{equation}\label{evbar}
\begin{aligned}
 e_{22}(\overline{\psi})&={\partial\overline{\psi}\over \partial Y_2}\cdot\Gn_i ,\quad e_{23}(\overline{\psi})={1\over 2}\Big\{{\partial\overline{\psi}\over \partial Y_2}\cdot\Gb_i+{\partial\overline{\psi}\over \partial Y_3}\cdot\Gn_i\Big\},\quad e_{33}(\overline{\psi})={\partial\overline{\psi}\over \partial Y_3}\cdot\Gb_i.
\end{aligned}
\end{equation}
Hence we can write $\GE_i$ as
$$\GE_i=(\Gt_i\,|\, \Gn_i\,|\, \Gb_i) \, \widehat{\GE}_i\,(\Gt_i\,|\, \Gn_i\,|\, \Gb_i) ^T$$ where the symmetric matrix $\widehat{\GE}_i$ is defined by  
\begin{equation}\label{HatEcase1}
\widehat{\GE}_i= \begin{pmatrix}
\ds{\cal Z}_i\cdot\Gt_i & * &*  \\   \\
\ds {1\over 2}{\partial\overline{u}_i\over \partial Y_2}\cdot\Gt_i & \ds e_{22}(\overline{u}_i) & * \\   \\
 \ds{ 1\over 2}{\partial\overline{u}_i\over \partial Y_3}\cdot\Gt_i & \ds e_{23}(\overline{u}_i) & \ds e_{33}(\overline{u}_i)
 \end{pmatrix}
 \end{equation} and where the field $\overline{u}_i\in L^2(0,L_i;H^1(\omega ; \R^3))$   is defined by
\begin{equation}\label{ubarcase1}
\overline{u}_i=\big[Y_2\big({\cal Z}_i\cdot\Gn_i\big)+Y_3\big({\cal Z}_i\cdot\Gb_i\big)\big]\Gt_i+\overline{w}_i,\qquad i\in\{1,\ldots,N\}.
\end{equation}

\subsection{Case  $\kappa=2$. Limit  behavior for  a sequence such that  $||\hbox{dist}(\nabla_x v_\delta,SO(3))||_{L^2({\cal S}_\delta)} \sim \delta^2$. }\label{Subcase2}

Let us consider a sequence of deformations $(v_\delta)_\delta$ of $\D_\delta$  such that
$$||\hbox{dist}(\nabla_x v_\delta,SO(3))||_{L^2({\cal S}_\delta)} \le C \delta^2.$$ Indeed, in each rod ${\cal P}_{i,\delta}$ we get $||\hbox{dist}(\nabla_x v_\delta,SO(3))||_{L^2({\cal P}_{i,\delta})} \le C \delta^2$. 
We denote by ${\cal V}_\delta$, $\GR_\delta$ and $\overline{v}_\delta$ the three terms of the decomposition of $v_\delta$ given by Theorem \ref{The3.3}.  The estimates \eqref{Lem1}, \eqref{Lem2} and \eqref{EstVL2} lead to the following lemma:

\begin{lemma}\label{Lem6.2}
There exists a subsequence still indexed by $\delta$ such that 
\begin{equation}\label{case2}
\begin{aligned}
&\GR_\delta \rightharpoonup \GR \quad \hbox{weakly in}\quad  H^1({\cal S} ; SO(3))\enskip\hbox{and strongly in }\enskip L^\infty({\cal S} ; SO(3)),\\
&{\cal V}_\delta\longrightarrow   {\cal V} \quad \hbox{strongly in}\quad  H^1({\cal S} ; \R^3),\\
&{1\over \delta}\big({d{\cal V}_{i,\delta}\over ds_i}-\GR_{i,\delta}\Gt_i\big)\rightharpoonup   {\cal Z}_i \quad \hbox{weakly  in}\quad  L^2(0,L_i ; \R^3),\\
&{1\over \delta^2}\Pi_{i,\delta}\big(\overline {v}_{\delta}\big) \rightharpoonup  \overline{v}_i \quad \hbox{weakly in}\quad  L^2(0,L_i ; H^1(\omega ; \R^3)).
\end{aligned}
\end{equation} Moreover ${\cal V}_i\in H^2(0,L_i; \R^3)$ for all $i\in\{1,\ldots, N\}$ and we have
\begin{equation}\label{condlimi2}
{\cal V}=\phi, \quad  \GR_i=\GI_3,\quad\hbox{on}\quad \Gamma_0\quad\hbox{and} \quad{d{\cal V}_i\over ds_i}=\GR_i\Gt_i\qquad i\in\{1,\ldots, N\}.
\end{equation}
\noindent Furthermore, we also have
\begin{equation}\label{Conv-v}
\begin{aligned}
\Pi_{i,\delta} (v_\delta)&\longrightarrow    {\cal V}_i \quad \hbox{strongly in}\quad  H^1(\Omega_i ; \R^3),\\
\Pi_{i,\delta} (\nabla_x v_\delta)& \longrightarrow  \GR_i\quad \hbox{strongly in}\quad L^2(\Omega_i ; \R^{3\times 3}).
\end{aligned}
\end{equation}
\end{lemma}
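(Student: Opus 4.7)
The plan is to instantiate Theorem \ref{The3.3} with the hypothesis $\|\mathrm{dist}(\nabla_x v_\delta,SO(3))\|_{L^2(\mathcal{S}_\delta)} \le C\delta^2$ and then pass to the limit in each of the fields appearing in the decomposition. First, rescaling the estimates in \eqref{Lem1} and \eqref{Lem2} with this rate gives
$\|d\GR_{i,\delta}/ds_i\|_{L^2} \le C$,
$\|d\mathcal{V}_{i,\delta}/ds_i-\GR_{i,\delta}\Gt_i\|_{L^2}\le C\delta$,
$\|\nabla_x v_\delta-\GR_{i,\delta}\|_{L^2(\Omega_{i,\delta})}\le C\delta^2$,
$\|\overline{v}_\delta\|_{L^2(\mathcal{S}_\delta)}\le C\delta^3$, and
$\|\nabla_x \overline{v}_\delta\|_{L^2(\mathcal{S}_\delta)}\le C\delta^2$.
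Combined with Korn's inequality (Theorem \ref{Korn}, specifically \eqref{EstVL2}), the field $\mathcal{V}_\delta$ is bounded in $H^1(\mathcal{S};\R^3)$, and $\GR_\delta$ is bounded in $H^1(\mathcal{S};\R^{3\times 3})$ (it is pointwise in the compact set $SO(3)$ so $L^\infty$ boundedness is automatic).

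Second, I extract weakly convergent subsequences. From the $H^1(\mathcal{S})$ bounds, $\GR_\delta\rightharpoonup \GR$ weakly in $H^1(\mathcal{S};\R^{3\times 3})$ and, by the one-dimensional compact embedding $H^1(0,L_i)\hookrightarrow C^0([0,L_i])$, strongly in $L^\infty(\mathcal{S};\R^{3\times 3})$. This uniform convergence forces the limit to satisfy $\GR(s_i)\in SO(3)$ for every $s_i\in[0,L_i]$, and it also makes the clamping condition $\GR_{i,\delta}(s_i)=\GI_3$ on $\Gamma_0$ pass to $\GR_i=\GI_3$ on $\Gamma_0$. Similarly, $\mathcal{V}_\delta\rightharpoonup \mathcal{V}$ weakly in $H^1(\mathcal{S};\R^3)$, and the pointwise boundary condition $\mathcal{V}_{i,\delta}=\varphi_i$ on $\Gamma_0$ passes to $\mathcal{V}=\varphi$ on $\Gamma_0$. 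The bound $\|\frac{1}{\delta}(d\mathcal{V}_{i,\delta}/ds_i-\GR_{i,\delta}\Gt_i)\|_{L^2}\le C$ yields, up to a further subsequence, the weak $L^2$ limit $\mathcal{Z}_i$. Finally, the rescaled estimates \eqref{estpivbar} with $\|\mathrm{dist}\|=O(\delta^2)$ show that $\frac{1}{\delta^2}\Pi_{i,\delta}(\overline{v}_\delta)$ is bounded in $L^2(0,L_i;H^1(\omega;\R^3))$, so a weak limit $\overline{v}_i$ exists in that space.

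Third, I upgrade the convergence of $\mathcal{V}_\delta$ and identify the constraint. Passing to the limit in $d\mathcal{V}_{i,\delta}/ds_i-\GR_{i,\delta}\Gt_i=O(\delta)$ in $L^2$ together with the strong $L^\infty$ convergence of $\GR_{i,\delta}$ gives $d\mathcal{V}_{i,\delta}/ds_i\to \GR_i\Gt_i$ strongly in $L^2$. Combined with the weak $H^1$ convergence, we conclude that $\mathcal{V}_\delta\to \mathcal{V}$ strongly in $H^1(\mathcal{S};\R^3)$ and that the identity $d\mathcal{V}_i/ds_i=\GR_i\Gt_i$ holds. Since $\GR_i\in H^1$ and $\Gt_i$ is constant, this also gives $\mathcal{V}_i\in H^2(0,L_i;\R^3)$.

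Fourth, I deduce the convergences \eqref{Conv-v}. Writing $v_\delta=v_{e,\delta}+\overline{v}_\delta$, after rescaling,
\[
\Pi_{i,\delta}(v_{e,\delta})(s_i,Y_2,Y_3)=\mathcal{V}_{i,\delta}(s_i)+\delta\,\GR_{i,\delta}(s_i)(Y_2\Gn_i+Y_3\Gb_i),
\]
and the second term goes to $0$ strongly in $H^1(\Omega_i;\R^3)$ thanks to the factor $\delta$ and the $H^1$ bound on $\GR_{i,\delta}$, so $\Pi_{i,\delta}(v_{e,\delta})\to\mathcal{V}_i$ strongly in $H^1(\Omega_i;\R^3)$. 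The rescaled estimates for $\overline{v}_\delta$ (all of $L^2$-norm, $Y$-derivatives, and $s_i$-derivative being $o(1)$ when $\|\mathrm{dist}\|=O(\delta^2)$) imply $\Pi_{i,\delta}(\overline{v}_\delta)\to 0$ strongly in $H^1(\Omega_i;\R^3)$, yielding the first convergence in \eqref{Conv-v}. The last convergence follows by rescaling the estimate $\|\nabla_x v_\delta-\GR_{i,\delta}\|_{L^2(\Omega_{i,\delta})}=O(\delta^2)$ to $\|\Pi_{i,\delta}(\nabla_x v_\delta)-\GR_{i,\delta}\|_{L^2(\Omega_i)}=O(\delta)$ and invoking the strong $L^2$ convergence of $\GR_{i,\delta}$. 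The main subtlety lies in the third step, where the improvement of the convergence of $\mathcal{V}_\delta$ from weak to strong in $H^1$ is exactly what enables the identification of $d\mathcal{V}_i/ds_i=\GR_i\Gt_i$ and the regularity $\mathcal{V}_i\in H^2$; this relies crucially on the compactness of the embedding $H^1\hookrightarrow C^0$ available only in one space dimension.
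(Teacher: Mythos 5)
Your proof is correct and follows exactly the route the paper intends (the paper states the lemma as a direct consequence of the estimates \eqref{Lem1}, \eqref{Lem2}, \eqref{EstVL2} and the rescaled bounds \eqref{estpivbar} without writing out the details): rescale those estimates with the rate $\delta^2$, extract weak limits, use the one-dimensional compact embedding $H^1(0,L_i)\hookrightarrow C^0$ to get uniform convergence of $\GR_\delta$ (hence $\GR\in SO(3)$ pointwise, the boundary conditions, the identity $d{\cal V}_i/ds_i=\GR_i\Gt_i$, strong $H^1$ convergence of ${\cal V}_\delta$ and the $H^2$ regularity), and then transfer everything to $\Omega_i$ via $\Pi_{i,\delta}$. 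The only cosmetic remark is that one should note the knot-continuity conditions defining $H^1({\cal S};\cdot)$ also pass to the limit, which is immediate from the uniform convergence you established.
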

As a consequence of the above lemma we obtain
\begin{equation}\label{ConvGradcase2}
\begin{aligned}
{1\over \delta}\big(\Pi_{i,\delta} (\nabla_x v_\delta)-\GR_{i,\delta}\big)\Gt_i &\rightharpoonup  {d\GR_i\over ds_i}\big(Y_2\Gn_i+Y_3\Gb_i)+{\cal Z}_i \quad \hbox{weakly in}\quad 
L^2(\Omega_i ; \R^3),\\
{1\over \delta}\big(\Pi_{i,\delta} (\nabla_x v_\delta)-\GR_{i,\delta}\big)\Gn_i& \rightharpoonup  {\partial\overline{v}_i\over \partial Y_2}\quad \hbox{weakly in}\quad L^2(\Omega_i ; \R^3),\\
{1\over \delta}\big(\Pi_{i,\delta} (\nabla_x v_\delta)-\GR_{i,\delta}\big)\Gb_i& \rightharpoonup  {\partial\overline{v}_i\over \partial Y_3}\quad \hbox{weakly in}\quad L^2(\Omega_i ; \R^3).
\end{aligned}
\end{equation}

Proceeding as in Subsection  \ref{Subcase1} and using convergences \eqref{case2} and \eqref{ConvGradcase2}
 permit to obtain the limit of the Green-St Venant's tensor in the rescaled domain $\Omega_i$
\begin{equation}\label{GSVcase2}
{1\over 2\delta}\Pi_{i,\delta} \big((\nabla_xv_\delta)^T\nabla_x v_\delta-\GI_3\big)\rightharpoonup   \GE_i \qquad\hbox{weakly in}\quad  L^1(\Omega_i; \R^{3\times 3}),
\end{equation} with
\begin{equation}\label{ConvSTV}
\begin{aligned}
\GE_i=&{1\over 2}\Big\{(\Gt_i\,|\, \Gn_i\,|\, \Gb_i) \Bigl({d\GR_i\over ds_i}\big(Y_2\Gn_i+Y_3\Gb_i)+{\cal Z}_i\,|\,{\partial\overline{v}_i\over \partial Y_2}\,|\, {\partial\overline{v}^{i)}\over \partial Y_3}\Big)^T\GR_i\\
+& \GR^T_i \Bigl({d\GR_i\over ds_i}\big(Y_2\Gn_i+Y_3\Gb_i)+{\cal Z}_i\,|\,{\partial\overline{v}_i\over \partial Y_2}\,|\, {\partial\overline{v}^{i)}\over \partial Y_3}\Big)(\Gt_i\,|\, \Gn_i\,|\, \Gb_i)^T\Big\}.
\end{aligned}
\end{equation}
Setting 
\begin{equation}\label{ubarcase2}
\overline{u}_i=\big[Y_2\big({\cal Z}_i\cdot\GR_i\Gn_i\big)+Y_3\big({\cal Z}_i\cdot\GR_i\Gb_i\big)\big]\Gt_i+\GR^T_i\overline{v}_i,\qquad i\in\{1,\ldots,N\}.
\end{equation} and using the fact that the matrix $\ds \GR^T_i{d\GR_i\over ds_3}$ is antisymmetric, we can write $\GE_i$ as
\begin{equation*}
\GE_i=(\Gt_i\,|\, \Gn_i\,|\, \Gb_i)\, \widehat{\GE}_i\,(\Gt_i\,|\, \Gn_i\,|\, \Gb_i)^T,
\end{equation*} where the symmetric matrix $\widehat{\GE}_i$ is defined by  
\begin{equation}\label{HatEcase2}
\widehat{\GE}_i= \begin{pmatrix}
\ds-Y_2\Gamma_{i,3}(\GR)+Y_3\Gamma_{i,2}(\GR)+{\cal Z}_i\cdot\GR_i\Gt_i & * & * \\ \\
\ds -{1\over 2}Y_3\Gamma_{i,1}(\GR)+{1\over 2}{\partial\overline{u}_i\over \partial Y_2}\cdot\Gt_i & \ds e_{22}(\overline{u}_i) & * \\ \\
\ds {1\over 2}Y_2\Gamma_{i,1}(\GR)+{1\over 2}{\partial\overline{u}_i\over \partial Y_3}\cdot\Gt_i &  \ds e_{23}(\overline{u}_i) &\ds e_{33}(\overline{u}_i) \end{pmatrix}
 \end{equation} where
\begin{equation}\label{GammaZcase2}
\Gamma_{i,3}(\GR)={d\GR_i\over ds_i}\Gt_i\cdot \GR_i\Gn_i,\qquad \Gamma_{i,2}(\GR)={d\GR_i\over ds_i}\Gb_i\cdot \GR_i\Gt_i,\qquad \Gamma_{i,1}(\GR)={d\GR_i\over ds_i}\Gn_i\cdot \GR_i\Gb_i,
\end{equation} and where the $e_{kl}(\overline{u}_i)$'s are given  by \eqref{evbar}.

\section{Elastic structure}
  \medskip
In this section  we assume that  the  structure ${\cal S}_\delta$ is made of an elastic material. The associated local energy $W\ \ :\ \  \GS_3\longrightarrow \R^+$ is a  continuous function of symmetric matrices which satisfies the following assumptions 
%which are similar to those adopted in [15], [16] and [17] (the reader is also referred to [9] for general introduction to elasticity)
\begin{equation}\label{HypW}
\begin{aligned}
&\exists c>0 \quad \hbox{such  that }\quad \forall E\in\GS_3\quad  W(E)\ge c|||E|||^2,\\
&\forall\varepsilon >0,\quad \exists \theta >0,\quad \hbox{such  that }\\
&\forall E\in\GS_3\quad |||E|||\le \theta\; \Longrightarrow \; |W(E)-Q(E)|\le \varepsilon |||E|||^2,
\end{aligned}
\end{equation} where $Q$ is a positive quadratic form defined on the set of $3\times 3$ symmetric matrices (see e.g. \cite{BG4}). Remark that $Q$ satisfies the first inequality in \eqref{HypW} with the same constant $c$. 

Following \cite{C1}, for any $3\times3$ matrix $F$, we set 
\begin{equation}\label{W}
\widehat{W}(F)=\left\{\begin{aligned}
&W\Big({1\over 2}(F^TF-\GI_3)\Big) \quad\hbox{if} \quad \det(F)>0,\\
&+\infty \hskip 2.8cm \hbox{if} \quad \det(F)\le 0.
\end{aligned}\right.
\end{equation}

Remark that due to \eqref{HypW} and to the inequality $|||F^TF-\GI_3|||\ge dist(F,SO(3))$ if $\det (F)>0$, we have for any $3\times 3$ matrix $F$
\begin{equation}\label{HatW}
\widehat{W}(F)\ge {c\over 4} dist (F,SO(3))^2. 
\end{equation}
 
\begin{remark}\label{7.1}  A classical example of  local elastic energy satisfying the above assumptions is given by the St Venant-Kirchhoff's density (see \cite{C1}) 
\begin{equation}\label{HatW2}
\widehat{W}(F)=\left\{\begin{aligned}
&{\lambda\over 8}\big(tr(F^TF-\GI_3) \big)^2+{\mu\over 4}tr\big((F^TF-\GI_3)^2\big)\quad\hbox{if}\quad \det(F)>0,\\
&+\infty\hskip 6.6cm \hbox{if}\quad \det(F)\le 0.
\end{aligned}\right.
\end{equation}
 The coefficients $\lambda$ and $\mu$ are the Lam's constants. In this case we have for all matrix $E\in \GS_3$
 $$Q(E)={\lambda\over 2}\big(tr(E) \big)^2+{\mu}\,tr\big(E^2\big).$$
  \end{remark}

Now we assume that the structure ${\cal S}_\delta$ is submitted to applied body forces $f_{\kappa,\delta}\in L^2({\cal S}_\delta ; \R^3)$ and we define   the total energy  
$J_{\kappa,\delta}(v)$\footnote{For later convenience, we have added the term $\ds \int_{{\cal S}_\delta}f_{\kappa,\delta}(x)\cdot  I_d(x)dx$
to the usual standard energy, indeed this does not affect the  minimizing problem for  $J_{\kappa,\delta}$. }  over $\D_\delta$ by
\begin{equation}\label{J}
J_{\kappa,\delta}(v)=\int_{{\cal S}_\delta}\widehat{W}(\nabla_x v)(x)dx-\int_{{\cal S}_\delta}f_{\kappa,\delta}(x)\cdot (v(x)-I_d(x))dx.
\end{equation}

\noindent {\bf  Assumptions on the forces.}{\it  We set 
\begin{equation}\label{kappaprime}
\kappa^{'}=\left\{\begin{aligned}
&2\kappa-2\quad \hbox{if }\enskip 1\le\kappa\le 2,\\
&\kappa\hskip 14mm \hbox{if }\enskip \kappa\ge 2.
\end{aligned}\right.
\end{equation} 
In what follows we define the forces applied to the structure by distinguish the forces applied to the junctions and to their complementary in ${\cal S}_\delta$. Moreover in order to take into account the resultant and the moment of the forces near each knot  and in each cross section of the rods we decompose the forces  density into two types. The first one mainly works with  the mean deformation ${\cal V}$ while the second one is related to the rotation $\GR$.  We begin by the definition of the forces in the junctions.

\noindent For any knot $A$, let $F_A$ and  $G_A$ be  two fields belonging to $L^2({\cal J}_{A,\rho_0}; \R^3)$\footnote{The domain ${\cal J}_{A,\rho_0}$ is obtained by transforming ${\cal J}_{A,\rho_0\delta}$ by a dilation of center $A$ and ratio $1/\delta$. }, the second field $G_A$ satisfying $\ds \int_{{\cal J}_{A,\rho_0}} G_A(z)dz=0$. 

We define the applied forces in  the junction ${\cal J}_{A,\rho_0\delta}$   ($A\in {\cal K}$) by
\begin{equation}\label{ForceN}
f_{\kappa,\delta}(x)=\delta^{\kappa^{'}-1} F_A\Big(A+{x-A\over \delta}\Big)+\delta^{\kappa^{'}-2} G_A\Big(A+{x-A\over \delta}\Big),\qquad \hbox{ for a.e. }  x\in {\cal J}_{A,\rho_0\delta}.
\end{equation} 

In order   to precise the forces $f_{\delta,\kappa}$ in the complementary of the junctions (i.e. in ${\cal S}_\delta\setminus  \bigcup_{A\in {\cal K}}{\cal J}_{A,\rho_0\delta}$), we follow \cite{BG2} and we assume that there exist $f$, $g^{(\Gn)}$ and $g^{(\Gb)}$ in $ L^2({\cal S} ; \R^3)$ such that
\begin{equation}\label{ForceP}
\begin{aligned}
f_{\kappa,\delta}(x)&=
\delta^{\kappa^{'}} f_i(s_i)+\delta^{\kappa^{'}-2}\big(y_2 g^{(\Gn)}_i(s_i)+y_3g^{(\Gb)}_i
(s_i)\big),\qquad x=\Phi_i(s)\\
& \hbox{ for a.e.}\enskip s\in\big(]0,L_i[\setminus \bigcup_{k=1}^{K_i}]a_i^k-\rho_0\delta,a_i^k+\rho_0\delta[\big)\times \omega_\delta 
\end{aligned}
\end{equation} }
Notice that $J_{\kappa,\delta}(I_d)=0$. So,  in order to minimize $J_{\kappa,\delta}$ we only need to  consider  deformations $v $ of $\GD_{\delta}$ such that   $J_{\kappa,\delta}(v )\le 0$. 
Now from the decomposition given in Theorem \ref{The3.3}, and the definition \eqref{ForceN}-\eqref{ForceP} of the forces we first get
\begin {equation} \label{estforces}
\begin{aligned}
|\int_{{\cal S}_\delta}f_{\kappa,\delta}(x)\cdot (v(x)-I_d(x))dx| \le C\delta^{\kappa^{'}+2}\Big[||f||_{L^2({\cal S};\R^3)}||{\cal V}-\phi||_{L^2({\cal S};\R^3)}&\\
+ (||g^{(\Gn)}||_{L^2({\cal S};\R^3)}+||g^{(\Gb)}||_{L^2({\cal S};\R^3)})\Big(\sum_{i=1}^N||\GR_i-\GI_3||_{L^2(0,L_i;\R^{3\times3})}\Big)&\\
+\sum_{A\in {\cal K}}||F_A||_{L^2({\cal J}_{A,\rho_0}; \R^3)}||{\cal V}(A)-A||_2+\sum_{A\in {\cal K}}||G_A||_{L^2({\cal J}_{A,\rho_0}; \R^3)}|||\GR(A)-\GI_3|||&\Big].
\end{aligned}\end{equation}
In the case $\kappa\ge 2$, using \eqref{EstmKornN}, \eqref{EstRL2} and \eqref{EstVL2} gives
\begin {equation} \label{estforces1}
\begin{aligned}
|\int_{{\cal S}_\delta}f_{\kappa,\delta}(x)\cdot (v(x)-I_d(x))dx| \le C\delta^{\kappa}||\hbox{dist}(\nabla_x v , SO(3))||_{L^2({\cal S}_\delta)}
\end{aligned}\end{equation}
while in the case $1\le \kappa\le 2$, \eqref{EstR3} and \eqref{EstmKornN2} lead to 
\begin {equation} \label{estforces2}
\begin{aligned}
|\int_{{\cal S}_\delta}f_{\kappa,\delta}(x)\cdot (v(x)-I_d(x))dx| \le C\delta^{2\kappa}\Big\{1+{1\over \delta}||\hbox{dist}(\nabla_x v , SO(3))||_{L^2({\cal S}_\delta)}\Big\}
\end{aligned}\end{equation}
where in both cases the constant $C$ depends on the $L^2$-norms of $f$, $g^{(\Gn)}$, $g^{(\Gb)}$, $F_A$ and $G_A$ (and are then independent upon $\delta$).
For a deformation $v$ such that $J_{\kappa,\delta}(v )\le 0$, the coerciveness assumption \eqref{HatW} and the estimates \eqref{estforces1} and \eqref{estforces2} allows us to obtain for $1\le \kappa$
\begin{equation}\label{EstDist}
||\hbox{dist}(\nabla_x v , SO(3))||_{L^2({\cal S}_\delta)}\le C\delta^{\kappa}.\end{equation} Again  estimates \eqref{estforces1} and \eqref{estforces2} and \eqref{EstDist} lead to 
\begin{equation}\label{EstJ}
c\delta^{2\kappa}\le J_{\kappa,\delta}(v)\le 0.\end{equation}
with a constant independent on $\delta$.

\noindent Then, from  \eqref{HypW}-\eqref{W}-\eqref{HatW} and  the estimates  \eqref{estforces1}, \eqref{estforces2} and \eqref{EstDist}  we deduce
\begin{equation*}
{c\over 4}||(\nabla_x v )^T\nabla_x v -\GI_3||^2_{L^2({\cal S}_\delta ; \R^{3\times 3})} \le J_{\kappa,\delta}(v)+\int_{{\cal S}_\delta}f_{\kappa,\delta}\cdot(v -I_d)\le C\delta^{2\kappa}.
\end{equation*} Hence, the following estimate of the Green-St Venant's tensor hold true:
$$\big\|{1\over 2}\big\{(\nabla_x v )^T\nabla_x v -\GI_3\big\}\big\|_{L^2({\cal S}_\delta; \R^{3\times 3})}\le  C\delta^{\kappa}.$$ We deduce from the above inequality that $v\in (W^{1,4}({\cal S}_\delta))^3$  with
$$||\nabla_x v ||_{L^4({\cal S}_\delta; \R^{3\times 3})}\le  C\delta^{1\over 2}.$$

We set
$$m_{\kappa,\delta}=\inf_{v\in \D_\delta}J_{\kappa,\delta}(v)$$
 and we recall that, in general, a minimizer of $J_{\kappa,\delta}$ does not exist  on $\D_\delta$.
%As a consequence of  \eqref{EstJ} we  have 
%$$c\le {m_{\kappa,\delta}\over \delta^{2\kappa}}\le 0.$$
% We denote
%\begin{equation}\label{Estmini}
%m_\kappa=\liminf_{\delta\to 0}{m_{\kappa,\delta}\over \delta^{2\kappa}}.\end{equation}

\section{Asymptotic behavior  of $m_{\kappa,\delta}$ for $1<\kappa<2$.}

The goal of this section is to establish  Theorem \ref{theo8.1} below. Let us first introduce a few notations. We denote by ${\cal C}$ the convex hull of the set $SO(3)$
\begin{equation}
{\cal C}=\overline{conv} (SO(3)).
\end{equation}
We set 
\begin{equation}\label{deflim}
\begin{aligned}
\V\R_1=\Big\{({\cal V},\GR)\in H^1({\cal S} ; \R^3) \times L^2({\cal S} ; {\cal C})\;|\; 
{\cal V} & =\phi  \quad  \hbox{ on }\quad \Gamma_0,\\
{d{\cal V}_i \over ds_i} & =\GR_i \Gt_i \quad i\in\{1,\ldots, N\}\Big\}.
\end{aligned}\end{equation}
We define the linear functional  ${\cal L}$ over $H^1({\cal S} ; \R^3) \times L^2({\cal S} ; \R^{3\times 3})$ by 
\begin{equation}\label {L1}
\begin{aligned}
{\cal L}\big({\cal V},\GR\big)&=\sum_{i=1}^N\pi\int_{0}^{L_i}\Big( f_i\cdot \big({\cal V}_i-\phi_i\big) +{g^{(\Gn)}_i
\over 3}\cdot\big(\GR_i-\GI_3)\Gn_i+{g^{(\Gb)}_i\over 3}\cdot\big(\GR_i-\GI_3)\Gb_i\Big)ds_i\\
 +\sum_{A\in {\cal K}}\Big[\Big(&\int_{{\cal J}_{A,\rho_0}} F_A(y) dy\Big)\cdot\big({\cal V}(A)-\phi(A)\big)+\int_{{\cal J}_{A,\rho_0}} G_A(y)\cdot\big(\GR(A)-\GI_3\big)(y-A)\, dy\Big].
\end{aligned}
\end{equation}  It is easy to prove that the infimum of $-{\cal L}$ on $\V\R_1$ is a minimum.

\begin {theorem}\label{theo8.1}
We have 
\begin{equation}\label{res1}
\lim_{\delta\to 0}{m_{\kappa,\delta}\over \delta^{2\kappa}}=\min_{({\cal V},\GR)\in \V\R_1}\big(-{\cal L}\big({\cal V},\GR\big)\big).
\end{equation}
\end{theorem}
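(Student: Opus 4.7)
The proof goes by a matching lower and upper bound, with a relaxation argument bridging the gap between $SO(3)$ and $\overline{\rm conv}(SO(3))$.

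\textbf{Lower bound.} Pick an almost-minimizing sequence $v_\delta\in\D_\delta$, so that $J_{\kappa,\delta}(v_\delta)\le m_{\kappa,\delta}+\delta^{2\kappa+1}$. From \eqref{EstJ}--\eqref{EstDist}, $\|\hbox{dist}(\nabla_xv_\delta,SO(3))\|_{L^2({\cal S}_\delta)}=O(\delta^\kappa)$, so Lemma \ref{case1} applies: along a subsequence the fields $({\cal V}_\delta,\GR_\delta)$ from Theorem \ref{The3.3} produce a limit $({\cal V},\GR)\in\V\R_1$ (the $\V\R_1$-constraints being precisely \eqref{condlimi1}). Since $\widehat W\ge 0$, the task reduces to
$$\frac{1}{\delta^{2\kappa}}\int_{{\cal S}_\delta} f_{\kappa,\delta}\cdot(v_\delta-I_d)\,dx\longrightarrow {\cal L}({\cal V},\GR).$$
Split the integral into junctions and their complement. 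In each junction ${\cal J}_{A,\rho_0\delta}$, rescale by $1/\delta$ about $A$ and use \eqref{DecN} to replace $v_\delta-I_d$ by $({\cal V}(A)-\phi(A))+(\GR(A)-\GI_3)(y-A)$ in the limit; the scalings $\delta^{\kappa'-1}$ and $\delta^{\kappa'-2}$ carried by $F_A$ and $G_A$ combine with the junction volume $\delta^3$ to reproduce the knot terms in \eqref{L1}. On the complement of the junctions insert $v_\delta=v_{e,\delta}+\overline{v}_\delta$; the $\overline{v}_\delta$-contribution is negligible by \eqref{Lem2}, and integrating the elementary part across the cross-section $\omega_\delta$ kills the terms odd in $(y_2,y_3)$ and produces the rod integrals of \eqref{L1} via the appropriate cross-sectional moments. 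This yields $\liminf_{\delta\to 0} m_{\kappa,\delta}/\delta^{2\kappa}\ge -{\cal L}({\cal V},\GR)\ge\min_{\V\R_1}(-{\cal L})$.

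\textbf{Upper bound for smooth $SO(3)$-valued pairs.} Fix $({\cal V},\GR)\in\V\R_1$ with $\GR\in W^{1,\infty}({\cal S};SO(3))$. Define $v_\delta$ as the elementary rod-structure deformation \eqref{defelem}--\eqref{defelemJ} associated with $({\cal V},\GR)$: $v_\delta={\cal V}_i+\GR_i(y_2\Gn_i+y_3\Gb_i)$ on each rod and $v_\delta={\cal V}(A)+\GR(A)(x-A)$ on each junction. By the compatibility at knots and clamping on $\Gamma_0$, $v_\delta\in H^1({\cal S}_\delta;\R^3)$ with $v_\delta=I_d$ on $\Gamma_0^\delta$; smoothness of $\GR$ gives $\det\nabla_xv_\delta>0$ for $\delta$ small. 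Using $d{\cal V}_i/ds_i=\GR_i\Gt_i$ a direct computation yields $\nabla_xv_\delta-\GR_i=O(\delta)$ pointwise, whence $\|E(v_\delta)\|_{L^2({\cal S}_\delta)}^2=O(\delta^4)$ and the stored energy satisfies $\int\widehat W(\nabla_xv_\delta)=O(\delta^4)=o(\delta^{2\kappa})$ since $\kappa<2$. The work of the forces is computed exactly as in the lower bound and converges to $\delta^{2\kappa}{\cal L}({\cal V},\GR)$. Hence $\limsup_{\delta\to 0} m_{\kappa,\delta}/\delta^{2\kappa}\le -{\cal L}({\cal V},\GR)$ for any such smooth $SO(3)$-valued admissible pair.

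\textbf{Relaxation and conclusion.} For a general $({\cal V},\GR)\in\V\R_1$, approximate by a sequence $({\cal V}^n,\GR^n)\in\V\R_1$ with $\GR^n\in W^{1,\infty}({\cal S};SO(3))$ and ${\cal L}({\cal V}^n,\GR^n)\to{\cal L}({\cal V},\GR)$. Each pointwise value of $\GR$ lies in $\overline{\rm conv}(SO(3))$ and is a convex combination of at most ten elements of $SO(3)$ by Carathéodory; oscillating on a fine partition of each $\gamma_i$ between these rotations in the right proportions gives an $SO(3)$-valued $\GR^n$ whose averages against test functions converge to $\GR$. One then defines ${\cal V}^n$ by integrating ${d{\cal V}^n_i}/{ds_i}=\GR^n_i\Gt_i$ from $\Gamma_0$, and restores the connection condition ${\cal V}^n_i(a_i^k)={\cal V}^n_j(a_j^l)$ at each knot by a small localized correction whose contribution to ${\cal L}$ tends to zero. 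Combining with the smooth case above and a diagonal extraction gives $\limsup m_{\kappa,\delta}/\delta^{2\kappa}\le -{\cal L}({\cal V},\GR)$ for every $({\cal V},\GR)\in\V\R_1$; taking the infimum closes the gap and proves \eqref{res1}.

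\textbf{Main obstacle.} The delicate step is the relaxation. The knot contributions in ${\cal L}$ involve the pointwise trace values ${\cal V}(A)$ and $\GR(A)$, so merely weak convergence $\GR^n\rightharpoonup\GR$ in $L^2$ is not enough: one must arrange the oscillation so that the knot moments against $F_A$ and $G_A$ also converge. Simultaneously, the junction compatibility conditions and the clamping on $\Gamma_0$ couple the various rods through the global integrals $\int_{\gamma_i}\GR^n\Gt_i$, so the correction cannot be performed in a purely local fashion. Building such an approximation—presumably through a Young-measure/lamination construction tailored to the linear form ${\cal L}$—is the technical core of the theorem.
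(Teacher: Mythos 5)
Your overall architecture (lower bound from the decomposition of Theorem \ref{The3.3} and the compactness of Subsection \ref{Subcase1} plus $\widehat W\ge 0$; upper bound by elementary test deformations whose Green-St Venant tensor is $O(\delta)$, hence of energy $o(\delta^{2\kappa})$ for $\kappa<2$; then a relaxation from $SO(3)$-valued fields to ${\cal C}$-valued ones) is exactly the paper's strategy, and your Step 1 is essentially the paper's Step 1. But there are two genuine gaps. First, in your ``smooth $SO(3)$-valued'' upper bound the test field is not admissible as you define it: an elementary rod-structure deformation in the sense of Definition \ref{DEF2} is only an $H^1$ function if the pair satisfies the junction compatibility \eqref{defelemJ}, i.e.\ $\GR_i$ is \emph{constant}, equal to $\GR(A)$, and ${\cal V}_i$ is the corresponding affine map on the $\rho_0\delta$-neighbourhood of each knot; a generic $W^{1,\infty}$ rotation field does not satisfy this, so your piecewise formula is discontinuous across the rod/junction interfaces. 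Likewise $\V\R_1$ only imposes ${\cal V}=\phi$ on $\Gamma_0$, not $\GR=\GI_3$, so your $v_\delta$ does not equal $I_d$ on $\Gamma_0^\delta$ (the clamped cross-sections are rotated by $\GR(0)$) and hence need not lie in $\D_\delta$. The paper avoids both problems by building these properties into the approximating sequence: Proposition \ref{PropAprox} produces pairs in $\V\R_1$ with $\GR^{(n)}\in W^{1,\infty}({\cal S};SO(3))$ equal to $\GI_3$ in a neighbourhood of every knot and of every clamped extremity.

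Second, and more importantly, the relaxation step --- which you yourself flag as ``the technical core'' and only describe as ``presumably a Young-measure/lamination construction'' --- is precisely the content of the paper's Proposition \ref{PropAprox} (built from Lemmas \ref{LEMap1}, \ref{LEMap1bis} and \ref{LEMap4}) and cannot be waved through. Your proposed repair of the knot condition, namely integrating $d{\cal V}^n_i/ds_i=\GR^n_i\Gt_i$ and then restoring ${\cal V}^n_i(a^k_i)={\cal V}^n_j(a^l_j)$ ``by a small localized correction'', is not available: correcting ${\cal V}^n$ alone destroys the very constraint $d{\cal V}^n_i/ds_i=\GR^n_i\Gt_i$ that defines $\V\R_1$, so any correction must be done through $\GR^n$ itself. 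The paper's mechanism is different and is the key idea you are missing: the oscillating $SO(3)$-valued field is arranged to have \emph{exactly} the prescribed mean on each subinterval between consecutive points of ${\cal K}\cup\Gamma$ (after first shrinking $({\cal V},\GR)$ towards $(\phi,\GI_3)$ by a factor $1-1/n$ so that buffers equal to $\GI_3$ can be inserted at the ends of each subinterval), so that ${\cal V}^n$ obtained by integration automatically takes the required values at all knots and extremities, with no correction; a final smoothing (Lemma \ref{LEMap4}), again preserving these means, upgrades the piecewise-constant rotation to $W^{1,\infty}({\cal S};SO(3))$ --- a step you also need (a laminated, piecewise-constant $\GR^n$ is not even $H^1$, so it cannot feed Definition \ref{DEF2}) but do not address. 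Until this approximation result is actually proved, your upper bound only covers the special pairs of your second step, and the proof of \eqref{res1} is incomplete.
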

\begin{proof}
{\it Step 1.} In this step we show that 
$$\min_{({\cal V},\GR)\in \V\R_1}\big(-{\cal L}\big({\cal V},\GR\big)\big)\le \liminf_{\delta\to 0}{m_{\kappa,\delta}\over \delta^{2\kappa}}.$$

Let   $(v_\delta)_\delta$  be  a sequence of deformations belonging to $\D_\delta$ and such that
\begin{equation}\label{Hypvdelta}
\lim_{\delta\to 0}{J_{\kappa,\delta}(v_\delta)\over \delta^{2\kappa}}= \liminf_{\delta\to 0}{m_{\kappa,\delta}\over \delta^{2\kappa}}.
\end{equation} 
%Upon extracting a subsequence (still indexed by $\delta$) 
We can always assume that $J_{\kappa,\delta}(v_\delta)\le 0$.
Then, from the estimates of the previous section we obtain
\begin{equation*}
\begin{aligned}
&||\hbox{dist}(\nabla v_\delta, SO(3))||_{L^2({\cal S}_\delta)}\le C\delta^{\kappa},\\
& \big\|{1\over 2}\big\{\nabla v_\delta^T\nabla v_\delta-\GI_3\big\}\big\|_{L^2({\cal S}_\delta ; \R^{3\times 3})}\le  C\delta^\kappa,\\
 &||\nabla v_\delta||_{L^4({\cal S}_\delta ; \R^{3\times 3})}\le  C\delta^{1\over 2}.
 \end{aligned}
 \end{equation*} For any fixed $\delta$, the deformation $v_\delta$ is decomposed as in Theorem \ref{The3.3} and we are in a position to apply the results of  Subsection \ref{Subcase1}. There exists a subsequence still indexed by $\delta$ such that 
 \begin{equation}\begin{aligned}\label{8weakconv1}
&\GR_\delta \rightharpoonup \GR^{\{0\}} \quad \hbox{weakly-* in}\quad  L^\infty({\cal S} ; \R^{3\times 3}),\\
&{1\over \delta ^{\kappa-2}}\GR_\delta \rightharpoonup  0 \quad \hbox{weakly in}\quad H^1({\cal S} ; \R^{3\times 3}),\\
&{\cal V}_\delta\rightharpoonup  {\cal V}^{\{0\}} \quad \hbox{weakly in}\quad  H^1({\cal S} ; \R^3),\\
&{1\over \delta^{\kappa-1}}\GR_{i,\delta}^T\big({d{\cal V}_{i,\delta}\over ds_i}-\GR_{i,\delta}\Gt_i\big)\rightharpoonup   {\cal Z}^{\{0\}}_i \quad \hbox{weakly  in}\quad  L^2(0,L_i ; \R^3),\\
&{1\over \delta^\kappa}\Pi_{i,\delta}\big(\GR_{i,\delta}^T\overline {v}_{\delta}\big) \rightharpoonup  \overline{w}^{\{0\}}_i \quad \hbox{weakly in}\quad  L^2(0,L_i;H^1(\omega ; \R^3)).
\end{aligned}
\end{equation}
The couple $({\cal V}^{\{0\}},\GR^{\{0\}})$ belongs to $\V\R_1$.
\noindent Furthermore, we also have ($i\in\{1,\ldots,N\}$)
\begin{equation}\label{(8.7)}
\begin{aligned}
\Pi_{i,\delta} v_\delta&\rightharpoonup     {\cal V}^{\{0\}}_i \quad \hbox{weakly in}\quad H^1(\Omega_i ; \R^3),\\
\Pi_{i,\delta} (\nabla_x v_\delta)& \rightharpoonup  \GR^{\{0\}}_i\quad \hbox{weakly in}\quad L^4(\Omega_i ; \R^{3\times 3}),\\
{1\over 2\delta^{\kappa-1}}\Pi_{i,\delta} \big((\nabla_xv_\delta)^T\nabla_x v_\delta- & \GI_3\big)\rightharpoonup   \GE^{\{0\}}_i \quad\hbox{weakly in}\quad L^2(\Omega_i ; \R^{3\times 3}),
\end{aligned}
\end{equation}
 where the symmetric matrix ${\GE}^{\{0\}}_i$ is defined in \eqref{GSVEcase1} (see  Subsection \ref{Subcase1}). 
Due to the decomposition of $v_\delta$ and the above convergences \eqref{8weakconv1} 
we immediately get the limit of the term involving the body forces
$$ \lim_{\delta\to 0}{1\over \delta^{2\kappa}}\int_{{\cal S}_\delta}f_{\kappa,\delta}\cdot(v_\delta -I_d) = {\cal L}\big({\cal V}^{\{0\}},\GR^{\{0\}}\big)$$ where $ {\cal L}\big({\cal V},\GR\big)$ is defined by \eqref{L1}. 

\noindent Recall that we have $\ds-\int_{{\cal S}_\delta}f_{\kappa,\delta}\cdot(v_\delta -I_d)\le J_{\kappa,\delta}(v_\delta)$. Then, due to  \eqref{Hypvdelta} and the above limit, we finally get
\begin{equation}\label{Step11}
\min_{({\cal V},\GR)\in \V\R_1}\big(-{\cal L}\big({\cal V},\GR\big)\big)\le -{\cal L}\big({\cal V}^{\{0\}},\GR^{\{0\}}\big)\le\liminf_{\delta\to 0}{m_{\kappa,\delta}\over \delta^{2\kappa}}.
\end{equation}

\noindent {\it Step 2.} In this step we show that 
$$\limsup_{\delta\to 0}{m_{\kappa,\delta}\over \delta^{2\kappa}}\le \min_{({\cal V},\GR)\in \V\R_1}\big(-{\cal L}\big({\cal V},\GR\big)\big).$$ 
Let $({\cal V}^{\{1\}},\GR^{\{1\}})\in \V\R_1$ such that $\ds-{\cal L}\big({\cal V}^{\{1\}},\GR^{\{1\}}\big)=\min_{({\cal V},\GR)\in \V\R_1}\big(-{\cal L}\big({\cal V},\GR\big)\big)$.

\noindent  Using Proposition \ref{PropAprox} in the Appendix , there exists a sequence  $({\cal V}^{(n)},\GR^{(n)})_{n\ge 0}$ in $ \V\R_1$ which satisfies

$\bullet$  $\GR^{(n)}\in W^{1,\infty}({\cal S};SO(3))$,

$\bullet$  $\GR^{(n)}$ is equal to $\GI_3$ in a neighbourhood  of each knot $A\in {\cal K}$ and each  fixed extremity belonging to $\Gamma_0$,

$\bullet$ 
\begin{equation}\label{(8.8)}
\begin{aligned}
{\cal V}^{(n)} \rightharpoonup {\cal V}^{\{1\}} \quad \hbox{weakly  in} \;H^1({\cal S};\R^3),\\
\GR^{(n)}\rightharpoonup \GR^{\{1\}} \quad \hbox{weakly in} \; L^2({\cal S};{\cal C}).
\end{aligned}
\end{equation}

Now we fix $n$. Since $({\cal V}^{(n)},\GR^{(n)})$ in $ \V\R_1$ and due to the second condition  imposed on $\GR^{(n)}$ above, we can   consider the elementary deformation $v^{(n)}$ constructed by using  $({\cal V}^{(n)},\GR^{(n)})$ in  Definition \ref{DEF2}. Indeed the deformation $v^{(n)}$ belongs to $\D_\delta\cap W^{1,\infty}({\cal S}_\delta;\R^3)$. Thanks  to the expression of the gradient of $v^{(n)}$ (see \eqref{GradVE}) and to the definition of $\V\R_1$ we have
\begin{equation*}\label{GradVN10}
||\nabla_x v^{(n)}-\GR^{(n)}_{i}||_{L^\infty(\Omega_{i,\delta} ; \R^{3\times 3})}\le C_n\delta.
\end{equation*} Now using the identity $\big(\nabla_x v^{(n)}\big)^T\nabla_x v^{(n)}-\GI_3=\big(\nabla_x v^{(n)}-\GR^{(n)}_{i}\big)^T\GR^{(n)}_{i}+\big(\GR^{(n)}_{i}\big)^T\big(\nabla_x v^{(n)}-\GR^{(n)}_{i}\big)+\big(\nabla_x v^{(n)}-\GR^{(n)}_{i}\big)^T\big( \nabla_x v^{(n)}-\GR^{(n)}_{i}\big)$ and the above estimate, we obtain
\begin{equation}\label{GradVN11}
{1\over 2\delta^{\kappa-1}}\Pi_{i,\delta}\Big(\big(\nabla_x v^{(n)}\big)^T\nabla_x v^{(n)}-\GI_3\big)\Big)\longrightarrow 0\quad \hbox{strongly in } L^\infty(\Omega_i ; \R^{3\times 3})
\end{equation} as $\delta \to 0$. Hence, if $\delta$ is small enough, we get $\det\big(\nabla_x v^{(n)}\big)>0$ a.e. in  ${\cal S}_\delta$. 

In what follows, we show that $\ds\lim_{\delta \to 0}{1\over \delta^{2\kappa}} J_{\kappa,\delta}(v^{(n)})=-{\cal L}\big({\cal V}^{(n)},\GR^{(n)}\big)$.
Let ${\cal P}_{i,\delta}$ be a rod of the structure. Using the third assumption in \eqref{HypW} (with $\varepsilon=1$) and the estimate \eqref{GradVN11}, for $\delta$ small enough  we have
$${1\over \delta^{2\kappa}} \int_{{\cal P}_{i,\delta}}\widehat{W}(\nabla_x v^{(n)})(x)dx \le {1\over \delta^{2(\kappa-1)}} \int_{\Omega_{i}}\big(Q\big(E(v^{(n)})\big)+|||E(v^{(n)})|||^2\big)ds_idY_2dY_3$$ where 
$$E(v^{(n)})=\Pi_{i,\delta}\Big(\big(\nabla_x v^{(n)}\big)^T\nabla_x v^{(n)}-\GI_3\big)\Big).$$ Thanks to the convergence \eqref{GradVN11} we obtain
$\ds {1\over \delta^{2\kappa}} \int_{{\cal P}_{i,\delta}}\widehat{W}(\nabla_x v^{(n)})(x)dx\to 0$ as $\delta\to 0$. Notice that since $v^{(n)}$ is an elementary deformation, its Green-St Venant's tensor is null in the neightbourhood of the knots. Finally we get
$${1\over \delta^{2\kappa}} \int_{{\cal S}_{\delta}}\widehat{W}(\nabla_x v^{(n)})(x)dx\longrightarrow 0\quad\hbox{as $\delta$ tends to $0$.}$$ Using again the fact that $v^{(n)}$ is an elementary deformation, and assumptions \eqref{ForceN} and \eqref{ForceP} on the forces, we immediately get the limit of the term involving the body forces
$$ \lim_{\delta\to 0}{1\over \delta^{2\kappa}}\int_{{\cal S}_\delta}f_{\kappa,\delta}\cdot(v^{(n)} -I_d) ={\cal L}\big({\cal V}^{(n)},\GR^{(n)}\big).$$ Indeed, since $v^{(n)}\in \D_\delta$, we have
$${m_{\kappa,\delta}\over \delta^{2\kappa}}\le {J_{\kappa,\delta}(v^{(n)})\over \delta^{2\kappa}}.$$ Passing to the limit as $\delta$ tends to 0 we obtain
$$\limsup_{\delta\to 0}{m_{\kappa,\delta}\over \delta^{2\kappa}}\le-{\cal L}\big({\cal V}^{(n)},\GR^{(n)}\big).$$  In view of the convergences \eqref{(8.8)} we are able to pass to the limit as $n$ tends to infinity and we obtain
$$\limsup_{\delta\to 0}{m_{\kappa,\delta}\over \delta^{2\kappa}}\le-{\cal L}\big({\cal V}^{\{1\}},\GR^{\{1\}}\big)= \min_{({\cal V},\GR)\in \V\R_1}\big(-{\cal L}\big({\cal V},\GR\big)\big).$$ This concludes the proof of the theorem.
\end{proof}
\begin{remark} Let us point out that Theorem \ref{theo8.1} shows that for any minimizing sequence $(v_\delta)_\delta$ as in Step 1, the third convergence of the rescaled Green-St Venant's strain tensor in \eqref{(8.7)} is a strong convergence to $0$ in $L^2(\Omega_i;\R^{3\times 3})$.
\end{remark}

\section{Asymptotic behavior  of $m_{2,\delta}$.}

The goal of this section is to establish  Theorem \ref{theo9.1} below. Let us first introduce a few notations. We set 
\begin{equation}\label{deflim2}
\begin{aligned}
\V\R_2=\Big\{({\cal V},\GR)\in H^1({\cal S} ; \R^3) \times H^1({\cal S} ; SO(3))\;|\; 
{\cal V} & =\phi,\enskip \GR_i=\GI_3 \quad  \hbox{ on }\quad \Gamma_0,\\
{d{\cal V}_i \over ds_i} & =\GR_i \Gt_i \quad i\in\{1,\ldots, N\}\Big\}.
\end{aligned}\end{equation}
% We set 
%\begin{equation}\label{limitforces}
%\begin{aligned}
%\forall({\cal V},\GR)\in\V\R_2,\qquad {\cal L}&({\cal V},\GR)=\sum_{i=1}^N\pi\int_{0}^{L_i}\Big( f_i\cdot \big({\cal V}_i-\phi_i\big) +{g^{(\Gn)}_i
%\over 3}\cdot\big(\GR_i-\GI_3)\Gn_i\\
% &+{g^{(\Gb)}_i\over 3}\cdot\big(\GR_i-\GI_3)\Gb_i\Big)ds_i+\sum_{A\in {\cal K}}C_A F_A\cdot\big({\cal V}(A)-\phi(A)\big).
% \end{aligned}
% \end{equation} 
%The coefficient $C_A$ is the measure of the set ${\cal P}_A$.

\begin {theorem}\label{theo9.1}
We have 
\begin{equation}\label{res2}
\lim_{\delta\to 0}{m_{2,\delta}\over \delta^{4}}=\min_{({\cal V},\GR)\in \V\R_2} {\cal J}_2\big({\cal V},\GR\big),
\end{equation}
where the functional ${\cal J}_2$ is defined by 
\begin{equation}\label {J2}
{\cal J}_2\big({\cal V},\GR\big)=\sum_{i=1}^N \int_0^{L_i} a_{lk}\Gamma_{i,k}(\GR)\Gamma_{i,l}(\GR)-{\cal L}({\cal V},\GR).
\end{equation} In ${\cal J}_2$,  the $3\times 3$ matrix $\GA=(a_{ij})$ is symmetric and definite positive. This matrix depend on $\omega$ and on the quadratic form $Q$.
\end{theorem}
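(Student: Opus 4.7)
The plan is to mimic the two-step $\Gamma$-convergence strategy used for Theorem \ref{theo8.1}: prove a lower bound for $\liminf m_{2,\delta}/\delta^4$ via the compactness of Lemma \ref{Lem6.2} together with the asymptotically quadratic character of $\widehat W$, then produce a matching upper bound for $\limsup m_{2,\delta}/\delta^4$ by an explicit recovery sequence built from a smooth approximation of any $({\cal V},\GR)\in\V\R_2$. The essential new point compared with Theorem \ref{theo8.1} is that in the regime $\kappa=2$ the leading-order elastic energy is genuinely quadratic in the limit curvatures $\Gamma_{i,k}(\GR)$ of \eqref{GammaZcase2}, and the matrix $\GA$ arises as the Hessian of the minimum of a cross-section problem.

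\textbf{Step 1 (lower bound).} Let $(v_\delta)_\delta\subset\D_\delta$ satisfy $J_{2,\delta}(v_\delta)\le 0$ and $J_{2,\delta}(v_\delta)/\delta^4\to \liminf m_{2,\delta}/\delta^4$. The bounds of Section 7 give $\|\hbox{dist}(\nabla_xv_\delta,SO(3))\|_{L^2({\cal S}_\delta)}\le C\delta^2$, so Lemma \ref{Lem6.2} applies: up to a subsequence \eqref{case2} and \eqref{ConvGradcase2} hold for some $({\cal V},\GR)\in\V\R_2$ with ${\cal V}_i\in H^2(0,L_i;\R^3)$, together with the strain-tensor convergence \eqref{GSVcase2}--\eqref{HatEcase2}. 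Using the quadratic assumption in \eqref{HypW} and the $L^2$-boundedness of the rescaled Green-St Venant tensor, weak lower semicontinuity of the nonnegative integrand $Q$ yields
\begin{equation*}
\liminf_{\delta\to 0}{1\over\delta^4}\int_{{\cal S}_\delta}\widehat W(\nabla_x v_\delta)\,dx\ge \sum_{i=1}^N\int_0^{L_i}\int_\omega Q(\widehat{\GE}_i(s_i,Y_2,Y_3))\,dY_2\,dY_3\,ds_i.
\end{equation*}
At every $s_i$, and for fixed values of the three curvatures $\Gamma_{i,k}(\GR)(s_i)$, one then minimizes $\int_\omega Q(\widehat{\GE}_i)$ with respect to the still-free parameters ${\cal Z}_i(s_i)\cdot\GR_i(s_i)\Gt_i\in\R$ and $\overline u_i(s_i,\cdot)\in H^1(\omega;\R^3)$; this defines a quadratic form $(\gamma_1,\gamma_2,\gamma_3)\mapsto a_{lk}\gamma_k\gamma_l$. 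Coercivity of $Q$ combined with the explicit structure \eqref{HatEcase2} (the first moments of the entries of $\widehat{\GE}_i$ on $\omega$ recover the $\Gamma_{i,k}$) forces the strict positivity of $\GA$. Adding the limit of the force term, which equals ${\cal L}({\cal V},\GR)$ by \eqref{case2} and \eqref{ForceN}--\eqref{ForceP}, gives $\liminf m_{2,\delta}/\delta^4\ge {\cal J}_2({\cal V},\GR)\ge \min_{\V\R_2}{\cal J}_2$.

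\textbf{Step 2 (upper bound).} Let $({\cal V}^*,\GR^*)$ realize the minimum in \eqref{res2}. A density result analogous to Proposition \ref{PropAprox} but tailored to $SO(3)$-valued rotations produces a sequence $({\cal V}^{(n)},\GR^{(n)})\in\V\R_2$ with $\GR^{(n)}\in W^{2,\infty}({\cal S};SO(3))$, equal to $\GI_3$ (with ${\cal V}^{(n)}$ affine of slope $\GR^{(n)}\Gt_i$) in fixed neighbourhoods of each knot and of $\Gamma_0$, such that $\GR^{(n)}\to\GR^*$ in $H^1({\cal S};\R^{3\times3})$ and ${\cal V}^{(n)}\to{\cal V}^*$ in $H^1({\cal S};\R^3)$. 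For each fixed $n$, choose smooth minimizers $(\zeta^{(n)}_i,\overline u_i^{(n)})$ of the cross-section problem of Step 1 attached to $(\Gamma_{i,k}(\GR^{(n)}))_k$, arranged so that $\overline u_i^{(n)}$ vanishes where $\GR^{(n)}$ is locally constant. Define
\begin{equation*}
v_\delta^{(n)}(x)={\cal V}^{(n)}_i(s_i)+\GR^{(n)}_i(s_i)(y_2\Gn_i+y_3\Gb_i)+\delta^2\GR^{(n)}_i(s_i)\overline u^{(n)}_i(s_i,y_2/\delta,y_3/\delta)
\end{equation*}
in each rod and $v_\delta^{(n)}(x)={\cal V}^{(n)}(A)+\GR^{(n)}(A)(x-A)$ in each junction: the local rigidity of $({\cal V}^{(n)},\GR^{(n)})$ near the knots and the vanishing of $\overline u^{(n)}_i$ there make the two expressions match, so $v_\delta^{(n)}\in\D_\delta\cap W^{1,\infty}({\cal S}_\delta;\R^3)$. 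A direct computation gives
\begin{equation*}
{1\over 2\delta}\Pi_{i,\delta}\big((\nabla_x v_\delta^{(n)})^T\nabla_x v_\delta^{(n)}-\GI_3\big)\longrightarrow\widehat{\GE}_i^{(n)}\quad\hbox{in }L^\infty(\Omega_i;\R^{3\times3})
\end{equation*}
with $\widehat{\GE}_i^{(n)}$ given by \eqref{HatEcase2} for $({\cal V}^{(n)},\GR^{(n)},\zeta^{(n)}_i,\overline u_i^{(n)})$. In particular $\det\nabla v_\delta^{(n)}>0$ for $\delta$ small, and \eqref{HypW} lets us replace $\widehat W$ by $Q$ in the limit. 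The optimality of $\overline u_i^{(n)}$ produces exactly $a_{lk}\Gamma_{i,k}(\GR^{(n)})\Gamma_{i,l}(\GR^{(n)})$, and the force term converges to ${\cal L}({\cal V}^{(n)},\GR^{(n)})$, hence $\limsup m_{2,\delta}/\delta^4\le {\cal J}_2({\cal V}^{(n)},\GR^{(n)})$. Passing to the limit in $n$ thanks to the strong $H^1$ convergences finishes the proof.

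\textbf{Main obstacle.} The delicate point is Step 2. It requires (i) a density result keeping $\GR^{(n)}$ valued in $SO(3)$ (not merely in ${\cal C}$, so Proposition \ref{PropAprox} is insufficient as stated) and compatible with the local rigidity at every knot and at the clamped extremities of $\Gamma_0$; and (ii) the construction of cross-section correctors $\overline u_i^{(n)}$ regular enough that the $\delta^2$-correction $\GR^{(n)}_i\overline u^{(n)}_i(s_i,y/\delta)$ is uniformly $W^{1,\infty}$-controlled so as to produce a strong convergence of the Green-St Venant tensor. The explicit cross-section minimization that identifies $\GA$ and certifies its coercivity via the structure of \eqref{HatEcase2} is the remaining technical computation, routine in spirit but unavoidable.
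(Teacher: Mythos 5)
Your Step 1 is essentially the paper's argument (the only gloss is that the lower bound $W\ge Q-\varepsilon|||\cdot|||^2$ holds only for small strains, so one needs the truncation by the characteristic function $\chi^\theta_{i,\delta}$ of the bad set of measure $O(\delta^2/\theta^2)$ before invoking weak lower semicontinuity of $Q$ — this is exactly what the paper does and your sketch is compatible with it). The genuine gap is in Step 2, and it is precisely the point you flag as "obstacle (i)" without resolving it. Your recovery strategy requires approximating the minimizer $({\cal V}^*,\GR^*)\in\V\R_2$ by pairs $({\cal V}^{(n)},\GR^{(n)})\in\V\R_2$ with $\GR^{(n)}$ smooth, $SO(3)$-valued and \emph{constant near every knot}, while keeping the exact constraint ${d{\cal V}^{(n)}_i/ ds_i}=\GR^{(n)}_i\Gt_i$ and the continuity of ${\cal V}^{(n)}$ at the knots. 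This is not a routine variant of Proposition \ref{PropAprox}: the constraint ties the increment of ${\cal V}^{(n)}$ along each segment to $\int\GR^{(n)}_i\Gt_i$, so after modifying $\GR^*$ near the knots one must restore compatibility of the knot values of ${\cal V}^{(n)}$ (closure conditions around every cycle of the skeleton, and matching with $\phi$ on $\Gamma_0$); the mean-preserving interpolations of Lemmas \ref{LEMap1}--\ref{LEMap4} were designed exactly for this in the convex-hull setting and have no stated $SO(3)\cap H^1$-strong counterpart. Moreover, as written ("$\GR^{(n)}$ equal to $\GI_3$ in fixed neighbourhoods of each knot") the approximation cannot converge to $\GR^*$ strongly in $H^1({\cal S};\R^{3\times3})$ unless $\GR^*$ is already $\GI_3$ there; at best you could ask for $\GR^{(n)}$ locally constant on neighbourhoods shrinking with $n$, and even then the cycle-compatibility issue for ${\cal V}^{(n)}$ remains. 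Since your uniform ($L^\infty$) convergence of the rescaled Green--St Venant tensor, the positivity of $\det\nabla v^{(n)}_\delta$ and the admissibility of $v^{(n)}_\delta$ all rest on this unproven approximation, Step 2 is incomplete as it stands.

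The paper avoids this difficulty altogether: for $\kappa=2$ no density step in $\V\R_2$ is performed. Proposition \ref{PropAp} is applied directly to $({\cal V}^{\{2\}}_i,\GR^{\{2\}}_i,\overline v_i)$ on each portion of rod between consecutive knots; it replaces $\GR^{\{2\}}_i$ by a piecewise \emph{linear} (matrix-valued, not $SO(3)$-valued) interpolation at the scale $\eps=\theta\delta$ which is constant near the two ends and preserves the end values ${\cal V}^{\{2\}}(A)$, $\GR^{\{2\}}(A)$, so that the test deformation coincides with the rigid motion ${\cal V}^{\{2\}}(A)+\GR^{\{2\}}(A)(x-A)$ in every junction and belongs to $\D_\delta$ — crucially, the recovery deformation is \emph{not} required to satisfy the constraint ${d{\cal V}/ds_i}=\GR\Gt_i$ exactly, so no compatibility problem at the knots arises. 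The price is that the rescaled strain converges only strongly in $L^2(\Omega_i;\R^{3\times3})$ with an $O(1)$ bound in $L^\infty$ (estimate \eqref{EstLinf}), which the paper handles by the same $\chi^\theta_{i,\delta}$ splitting, using $W(E)\le C_2|||E|||^2$ on the bad set; the elimination of $\overline v_i$ (and of the axial stretch, whose optimal value is $0$ by the symmetry hypothesis on $\GQ$) is then done through Lemma \ref{correctors} after a density argument in $\overline v_i$ only. If you want to keep your smoother construction, you must either prove the $SO(3)$-valued, knot-rigid density result in $\V\R_2$ (including the cycle constraints), or drop the exact constraint in the recovery sequence as the paper does.
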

 Note that the infimum of  $ {\cal J}_2$ on $\V\R_2$ is actually a minimum.
 
 For a St-Venant- Kirchhoff material, whose energy is recalled in Remark  \ref{7.1}, the expression of the matrix $\GA$ is explicitly derived  at the end of the  appendix (see Remark \ref{10.7})  and it leads to the following limit energy

\begin{equation}
{\cal J}_2\big({\cal V},\GR\big)={\pi\over 4}\sum_{i=1}^N \int_0^{L_i} \big(\mu|\Gamma_{i,1}(\GR)|^2+E|\Gamma_{i,2}(\GR)|^2+E|\Gamma_{i,3}(\GR)|^2\big) -{\cal L}({\cal V},\GR)
\end{equation}
where $E$ and $\mu$ are respectively the Young and Poisson's coefficients.
\begin{proof}[Proof of Theorem \ref {theo9.1}]
{\it Step 1.} In this step we show that 
$$ \min_{({\cal V},\GR)\in \V\R_2} {\cal J}_2\big({\cal V},\GR\big)\le \liminf_{\delta\to 0}{m_{2,\delta}\over \delta^{4}}.$$

Let   $(v_\delta)_\delta$  be  a sequence of deformations belonging to $\GD_\delta$ and such that
\begin{equation}\label{Hypvdelta2}
\lim_{\delta\to 0}{J_{2,\delta}(v_\delta)\over \delta^{4}}=\liminf_{\delta\to 0}{m_{2,\delta}\over \delta^{4}}.
\end{equation} We can always assume that $J_{\kappa,\delta}(v_\delta)\le 0$.
Then, from the estimates of the Section 7 we obtain
%Upon extracting a subsequence (still indexed by $\delta$) we can assume that 
% the sequence $(v_\delta)_\delta$ satisfies the condition \eqref{EstJ}. From the estimates of the previous section we obtain
\begin{equation}\label{GSV2}
\begin{aligned}
&||\hbox{dist}(\nabla v_\delta, SO(3))||_{L^2({\cal S}_\delta)}\le C\delta^{2},\\
& \big\|{1\over 2}\big\{\nabla v_\delta^T\nabla v_\delta-\GI_3\big\}\big\|_{L^2({\cal S}_\delta ; \R^{3\times 3})}\le  C\delta^2,\\
 &||\nabla v_\delta||_{L^4({\cal S}_\delta ; \R^{3\times 3})}\le  C\delta^{1\over 2}.
 \end{aligned}
 \end{equation} For any fixed $\delta$, the deformation $v_\delta$ is decomposed as in Theorem \ref{The3.3}.  There exists a subsequence still indexed by $\delta$ such that  (see Subsection \ref{Subcase2})
 \begin{equation}\begin{aligned}\label{8weakconv2}
&\GR_\delta \rightharpoonup \GR^{\{0\}} \quad  \hbox{weakly in}\quad H^1({\cal S} ; SO(3)),\\
&{\cal V}_\delta\longrightarrow  {\cal V}^{\{0\}} \quad \hbox{strongly in}\quad  H^1({\cal S} ; \R^3),\\
&{1\over \delta}\big({d{\cal V}_{i,\delta}\over ds_i}-\GR_{i,\delta}\Gt_i\big)\rightharpoonup   {\cal Z}^{\{0\}}_i \quad \hbox{weakly  in}\quad  L^2(0,L_i ; \R^3),\\
&{1\over \delta^2}\Pi_{i,\delta}\big(\overline {v}_{\delta}\big) \rightharpoonup  \overline{v}^{\{0\}}_i \quad \hbox{weakly in}\quad  L^2(0,L_i;H^1(\omega ; \R^3)).
\end{aligned}
\end{equation}
The couple $({\cal V}^{\{0\}},\GR^{\{0\}})$ belongs to $\V\R_2$.
\noindent Furthermore, we also have ($i\in\{1,\ldots,N\}$)
\begin{equation}\label{(9.7)}
\begin{aligned}
\Pi_{i,\delta}(v_\delta)&\longrightarrow     {\cal V}^{\{0\}}_i \quad \hbox{strongly in}\quad H^1(\Omega_i ; \R^3),\\
\Pi_{i,\delta} (\nabla_x v_\delta)& \rightharpoonup  \GR^{\{0\}}_i\quad \hbox{weakly in}\quad L^4(\Omega_i ; \R^{3\times 3}),\\
{1\over 2\delta}\Pi_{i,\delta} \big((\nabla_xv_\delta)^T\nabla_x v_\delta- & \GI_3\big)\rightharpoonup   \GE^{\{0\}}_i \quad\hbox{weakly in}\quad L^2(\Omega_i ; \R^{3\times 3}),
\end{aligned}
\end{equation}
 where the symmetric matrix $\GE^{\{0\}}_i=(\Gt_i\,|\, \Gn_i\,|\, \Gb_i)\, \widehat{\GE}^{\{0\}}_i\,(\Gt_i\,|\, \Gn_i\,|\, \Gb_i)^T$,  (see Subsection \ref{Subcase2}).
The matrix   $\GE^{\{0\}}_i$  is defined by  
\begin{equation*}\label{HatEcase2}
\widehat{\GE}^{\{0\}}_i= \begin{pmatrix}
\ds-Y_2\Gamma_{i,3}(\GR^{\{0\}})+Y_3\Gamma_{i,2}(\GR^{\{0\}})+{\cal Z}^{\{0\}}_i\cdot\GR^{\{0\}}_i\Gt_i & * & * \\ \\
\ds -{1\over 2}Y_3\Gamma_{i,1}(\GR^{\{0\}})+{1\over 2}{\partial\overline{u}^{\{0\}}_i\over \partial Y_2}\cdot\Gt_i & \ds e_{22}(\overline{u}^{\{0\}}_i) & * \\ \\
\ds {1\over 2}Y_2\Gamma_{i,1}(\GR^{\{0\}})+{1\over 2}{\partial\overline{u}^{\{0\}}_i\over \partial Y_3}\cdot\Gt_i &  \ds e_{23}(\overline{u}^{\{0\}}_i) &\ds e_{33}(\overline{u}^{\{0\}}_i) \end{pmatrix}
 \end{equation*} with
 \begin{equation*}\label{ubarcase2}
\overline{u}^{\{0\}}_i=\big[Y_2\big({\cal Z}^{\{0\}}_i\cdot\GR^{\{0\}}_i\Gn_i\big)+Y_3\big({\cal Z}^{\{0\}}_i\cdot\GR^{\{0\}}_i\Gb_i\big)\big]\Gt_i+(\GR^{\{0\}}_i)^T\overline{v}^{\{0\}}_i,\quad i\in\{1,\ldots,N\}.\end{equation*}
Due to the decomposition of $v_\delta$, the assumptions \eqref{ForceN}, \eqref{ForceP} and  the above convergences \eqref{8weakconv2} 
we immediately get the limit of the term involving the body forces
\begin{equation*}
 \lim_{\delta\to 0}{1\over \delta^4}\int_{{\cal S}_\delta}f_{2,\delta}\cdot(v_\delta -I_d)= {\cal L}({\cal V}^{\{0\}},\GR^{\{0\}})
  \end{equation*} where ${\cal L}({\cal V},\GR)$ is defined by \eqref{L1}.
 
 We  now consider  a given rod ${\cal P}_{i,\delta}$.  Let $\varepsilon>0$ be fixed. Due to  assumption \eqref{HypW}, there exists $\theta>0$ such that 
\begin{equation}\label{Weps1}
\forall E\in\GS_3,  \enskip   |||E|||\le \theta, \enskipÊ W(E)\ge Q(E)-\varepsilon |||E|||^2.
\end{equation}
Let us denote by $\chi_{i,\delta}^\theta$ the characteristic function of the set
 $$A_{i,\delta}^\theta=\{s\in\Omega_i\setminus\bigcup_{k=1}^{K_i}\omega\times[a_i^k-\rho_0\delta,a_i^k+\rho_0\delta] \;;\; |||\Pi_{i,\delta} \big((\nabla_xv_\delta)^T\nabla_x v_\delta-\GI_3\big)(s)||| \geq \theta\}$$ where $a_i^k$ is the arc length of a knot belonging to the line $\gamma_i$.  Due to \eqref{(9.7)}, we have
\begin{equation}\label{mesA1} {\rm meas}(A_{i,\delta}^\theta)\le C{\delta^2\over \theta^2}.
\end{equation}
Using the positive character of $W$, \eqref{W}, \eqref{Weps1} and \eqref{GSV2} give
\begin{equation*}
\begin{aligned}
{1\over \delta^4} \int_{{\cal S}_\delta}\widehat{W}\big(\nabla_xv_\delta\big) &\ge \sum_{i=1}^N \int_{\Omega_i}{1\over \delta^2} W\Big({1\over 2}\Pi_{i,\delta} \big((\nabla_xv_\delta)^T\nabla_x v_\delta-\GI_3\big)\Big)(1-\chi_{i,\delta}^\theta)\\
&\ge  \sum_{i=1}^N  \int_{\Omega_i} Q\Big({1\over 2\delta}\Pi_{i,\delta} \big((\nabla_xv_\delta)^T\nabla_x v_\delta-\GI_3\big)(1-\chi_{i,\delta}^\theta)\Big)-C\varepsilon.
\end{aligned}
\end{equation*}
In view of \eqref{mesA1}, the function $\chi_{i,\delta}^\theta$ converges  a.e. to $0$ as $\delta$ tends to $0$ while the weak limit of $\ds{1\over 2\delta}\Pi_{i,\delta} \big((\nabla_xv_\delta)^T\nabla_x v_\delta-\GI_3\big)(1-\chi_{i,\delta}^\theta)$ is given by \eqref{(9.7)}. As a consequence  we have 
$$\liminf_{\delta\to 0}{1\over \delta^4} \int_{{\cal S}_\delta}\widehat{W}\big(\nabla_xv_\delta\big)\ge  \sum_{i=1}^N 
\int_{\Omega_i} Q\big(\GE^{\{0\}}_i\big)-C\varepsilon.$$
As $\varepsilon$ is arbitrary, this gives
\begin{equation}\label{Wlimite}
\liminf_{\delta\to 0} {1\over \delta^4} \int_{{\cal S}_\delta}\widehat{W}\big(\nabla_xv_\delta\big)\ge  \sum_{i=1}^N \int_{\Omega_i} Q\big(\GE^{\{0\}}_i\big).
\end{equation}
Hence, due to the limit of the applied forces and \eqref{Wlimite} we obtain
\begin{equation*}
\begin{aligned}
\sum_{i=1}^N \int_{\Omega_i} Q\big(\GE^{\{0\}}_i\big)-{\cal L}({\cal V}^{\{0\}},\GR^{\{0\}})\le \liminf_{\delta\to 0}{m_{2,\delta}\over \delta^{4}}
 \end{aligned}
 \end{equation*}  where ${\cal L}({\cal V},\GR)$ is defined  by \eqref{L1}.

\noindent Finally, for a.e. $s_i\in ]0,L_i[$ we minimize the quantity $\ds \int_\omega Q\big(\GE^{\{0\}}_i\big)(s_i, Y_2, Y_3)dY_2dY_3$ with respect to the ${\cal Z}^{\{0\}}_i (s_i)\cdot\GR^{\{0\}}_i (s_i)\Gt_i (s_i)$'s and the $\overline{u}^{\{0\}}_i(s_i, ., .)$'s using Lemma \ref{correctors} in the Appendix. Hence we obtain  
 \begin{equation}\label{finalstep1k=2}
 \begin{aligned}
\min_{({\cal V},\GR)\in \V\R_2} {\cal J}_2\big({\cal V},\GR\big)&\le {\cal J}_2\big({\cal V}^{\{0\}},\GR^{\{0\}}\big)\\
&=\sum_{i=1}^N \int_0^{L_i} a_{lk}\Gamma_{i,k}(\GR^{\{0\}})\Gamma_{i,l}(\GR^{\{0\}})-{\cal L}({\cal V}^{\{0\}},\GR^{\{0\}})\\
&\le \sum_{i=1}^N \int_{\Omega_i} Q\big(\GE^{\{0\}}_i\big)-{\cal L}({\cal V}^{\{0\}},\GR^{\{0\}})\le \liminf_{\delta\to 0}{m_{2,\delta}\over \delta^{4}},
\end{aligned}
 \end{equation}
where the  definite positive symmetric $3\times 3$ matrix $\GA=(a_{lk})$ is defined in Lemma \ref{correctors}.

\noindent {\it Step 2.} In this step we show that 
$$\limsup_{\delta\to 0}{m_{2,\delta}\over \delta^{4}}\le \min_{({\cal V},\GR)\in \V\R_2} {\cal J}_2\big({\cal V},\GR\big).$$
Let $({\cal V}^{\{2\}},\GR^{\{2\}})\in \V\R_2$  be such that $\ds{\cal J}_2\big({\cal V}^{\{2\}},\GR^{\{2\}}\big)=\min_{({\cal V},\GR)\in \V\R_2} {\cal J}_2\big({\cal V},\GR\big)$. 
First, for $i\in\{1,\ldots,N\}$ let   $\overline {v}_i$ be arbitrary in $W^{1,\infty}(\Omega_i;\R^3)$.

\noindent For each rod ${\cal P}_{i, \delta}$, we apply the  Proposition \ref{PropAp} given in Appendix to the triplet $({\cal V}^{\{2\}}_i,\GR^{\{2\}}_i, \overline{v}_i)$ in each portion of  ${\cal P}_{i, \delta}$ which is contained between an extremity and a knot or between two knots.   Doing such leads to a sequence of deformations $v_\delta$ which belong to $\D_\delta\cap W^{1, \infty}({\cal S}_\delta;\R^3)$ and which satisfy in each junction ${\cal J}_{A,\rho_0\delta}$
\begin{equation}\label{vtestJ2}
\forall A\in {\cal K},\qquad v_\delta(x)={\cal V}^{\{2\}}(A)+\GR^{\{2\}}(A)(x-A),\qquad x\in {\cal J}_{A,\rho_0\delta},\\
\end{equation} and 
\begin{equation}\label{vtestC2}
\begin{aligned}
&\det\big(\nabla  v_\delta(x)\big)>0 \qquad \hbox{for a.e. } x\in {\cal S}_\delta,\\
& \Pi_{i,\delta}(v_\delta)\longrightarrow {\cal V}^{\{2\}}_i\qquad \hbox{strongly in }\enskip H^1(\Omega_i ; \R^3),\\
&{1\over \delta} \Pi_{i,\delta}(v_\delta-{\cal V}_{i,\delta})\longrightarrow \GR^{\{2\}}_i\big(Y_2\Gn_i+Y_3\Gb_i)\qquad \hbox{strongly in }\enskip L^2(\Omega_i ; \R^3),\\
&{1\over 2\delta}\Pi_{i,\delta}\big((\nabla v_\delta)^T\nabla v_\delta
-\GI_3\big)\longrightarrow \GE^{\{2\}}_i=\big({d\GR^{\{2\}}_i\over ds_i}\big(Y_2\Ge_2+Y_3\Ge_3)\;|\; {\partial \overline{v}_i\over \partial Y_2} \; |\; {\partial \overline{v}_i\over \partial Y_3} \;\big)^T\GR^{\{2\}}_i\\
&\qquad +(\GR^{\{2\}}_i)^T\big({d\GR^{\{2\}}_i\over ds_i}\big(Y_2\Ge_2+Y_3\Ge_3)\;|\; {\partial \overline{v}_i\over \partial Y_2} \; |\; {\partial \overline{v}_i\over \partial Y_3}\;\big)\quad\hbox{strongly in }\enskip L^2(\Omega_i ; \R^{3\times 3})
\end{aligned}
\end{equation} where ${\cal V}_{i,\delta}$ is the average of $v_\delta$ on each cross-section of the rod ${\cal P}_{i,\delta}$. 
 \noindent Moreover there exists a constant $C_1\ge \theta$ which does not depend on $\delta$ such that
\begin{equation}\label{EstLinf}
\forall i\in\{1,\ldots,N\},\qquad ||\Pi_{i,\delta}\big((\nabla v_\delta)^T\nabla v_\delta
-\GI_3\big)||_{L^\infty(\Omega_i ; \R^{3\times 3})}\le C_1.
\end{equation}
\vskip 1mm
Let $\varepsilon>0$ be fixed. Due to  assumption \eqref{HypW}, there exists $\theta>0$ such that 
\begin{equation}\label{Weps}
\forall E\in\GS_3,  \enskip   |||E|||\le \theta, \enskipÊ W(E)\le Q(E)+\varepsilon |||E|||^2.
\end{equation}
Let us denote by $\chi_{i,\delta}^\theta$ the characteristic function of the set
 $$A_{i,\delta}^\theta=\{s\in\Omega_i\setminus\bigcup_{k=1}^{K_i}\omega\times[a_i^k-\rho_0\delta,a_i^k+\rho_0\delta] \;;\; |||\Pi_{i,\delta} \big((\nabla_xv_\delta)^T\nabla_x v_\delta-\GI_3\big)(s)||| \geq \theta\}$$ where $a_i^k$ is the arc length of a knot belonging to the line $\gamma_i$.  Due to \eqref{(9.7)}, we have
\begin{equation}\label{mesA} {\rm meas}(A_{i,\delta}^\theta)\le C{\delta^2\over \theta^2}.
\end{equation}
Using the fact that $v_\delta$ is equal to a rotation in the junctions (see \eqref{vtestJ2}) the Saint Venant's strain tensor is equal to zero in the junctions. Hence we have
\begin{equation}\label{Decoup}
\begin{aligned}
{1\over \delta^4} \int_{{\cal S}_\delta}\widehat{W}\big(\nabla_xv_\delta\big) &= \sum_{i=1}^N \int_{\Omega_i}{1\over \delta^2} W\Big({1\over 2}\Pi_{i,\delta} \big((\nabla_xv_\delta)^T\nabla_x v_\delta-\GI_3\big)\Big)(1-\chi_{i,\delta}^\theta)\\
&+ \sum_{i=1}^N \int_{\Omega_i}{1\over \delta^2} W\Big({1\over 2}\Pi_{i,\delta} \big((\nabla_xv_\delta)^T\nabla_x v_\delta-\GI_3\big)\Big)\chi_{i,\delta}^\theta.
\end{aligned}
\end{equation} In view of  \eqref{Weps} and the third strong convergence in \eqref{vtestC2}, the first term of the right hand side is estimated as 
\begin{equation*}
\begin{aligned}
&\sum_{i=1}^N \int_{\Omega_i}{1\over \delta^2} W\Big({1\over 2}\Pi_{i,\delta} \big((\nabla_xv_\delta)^T\nabla_x v_\delta-\GI_3\big)\Big)(1-\chi_{i,\delta}^\theta)\\
&\le  \sum_{i=1}^N  \int_{\Omega_i} Q\Big({1\over 2\delta}\Pi_{i,\delta} \big((\nabla_xv_\delta)^T\nabla_x v_\delta-\GI_3\big)(1-\chi_{i,\delta}^\theta)\Big)+C\varepsilon.
\end{aligned}
\end{equation*}
Again, the third  strong convergence in \eqref{vtestC2}  and  the pointwise convergence of the function  $\chi_{i,\delta}^\theta$ allows to pass to the limit as $\delta$ in the above inequality and to obtain 
$$\limsup_{\delta\to 0}\sum_{i=1}^N \int_{\Omega_i}{1\over \delta^2} W\Big({1\over 2}\Pi_{i,\delta} \big((\nabla_xv_\delta)^T\nabla_x v_\delta-\GI_3\big)\Big)(1-\chi_{i,\delta}^\theta)\le  \sum_{i=1}^N \int_{\Omega_i} Q\big(\GE^{\{2\}}_i\big)+C\varepsilon.$$

Let us recall estimate \eqref{EstLinf}.  Due to the continuity of $W$ and the third  assumption of \eqref{HypW}, there exists a constant $C_2$ such that
$$|||E|||\le C_1 \qquad \Longrightarrow \qquad W(E)\le C_2|||E|||^2.$$
It follows that the second term in \eqref{Decoup} is less than 
\begin{equation*}
\begin{aligned}
\sum_{i=1}^N \int_{\Omega_i}{1\over \delta^2} W\Big({1\over 2}\Pi_{i,\delta} \big((\nabla_xv_\delta)^T\nabla_x v_\delta-\GI_3\big)\Big)\chi_{i,\delta}^\theta\le \sum_{i=1}^N \int_{\Omega_i} C_2|||{1\over 2\delta}\Pi_{i,\delta} \big((\nabla_xv_\delta)^T\nabla_x v_\delta-\GI_3\big)|||^2 \chi_{i,\delta}^\theta.
\end{aligned}
\end{equation*} 
We have $\ds \chi_{i,\delta}^\theta$ tends to 0 weakly *  in $L^\infty(\Omega_i)$ and the third strong convergence in \eqref{vtestC2}, hence
\begin{equation*}
\begin{aligned}
\lim_{\delta\to 0}\sum_{i=1}^N \int_{\Omega_i}{1\over \delta^2} W\Big({1\over 2}\Pi_{i,\delta} \big((\nabla_xv_\delta)^T\nabla_x v_\delta-\GI_3\big)\Big)\chi_{i,\delta}^\theta=0.
\end{aligned}
\end{equation*} As $\varepsilon$ is arbitrary, finally we get
$$\limsup_{\delta\to 0}{1\over \delta^4} \int_{{\cal S}_\delta}\widehat{W}\big(\nabla_xv_\delta\big)\le \sum_{i=1}^N \int_{\Omega_i} Q\big(\GE^{\{2\}}_i\big).$$
Thanks to the first and second convergences in \eqref{vtestC2} we obtain the limit of the term involving the forces. Then we obtain
\begin{equation}
\begin{aligned}
\limsup_{\delta\to 0}{m_{2,\delta}\over \delta^{4}}\le\limsup_{\delta \to 0}{J_{2,\delta}(v_\delta)\over \delta^4}\le  \sum_{i=1}^N \int_{\Omega_i} Q\big(\GE^{\{2\}}_i\big)-{\cal L}({\cal V}^{\{2\}},\GR^{\{2\}})
 \end{aligned}
 \end{equation}  where ${\cal L}({\cal V},\GR)$ is defined in \eqref{L1}.
In view of the expression of the $\GE^{\{2\}}_i$'s and a density argument the above inequality holds true for any  family  $\overline {v}_i\in L^2(0,L_i;H^1(\omega;\R^3))$ ($i\in\{1,\ldots,N\}$). Then we can use again Lemma \ref{correctors} in order to minimize the right hand side of this inequality with respest to the $(\GR^{\{2\}})^T\overline{v}_i(s_i, ., .)$'s. This gives
 \begin{equation*}
 \begin{aligned}
\limsup_{\delta\to 0}{m_{2,\delta}\over \delta^{4}}&\le \min_{({\cal V},\GR)\in \V\R_2} {\cal J}_2\big({\cal V},\GR\big)= {\cal J}_2\big({\cal V}^{\{2\}},\GR^{\{2\}}\big)\\
&=\sum_{i=1}^N \int_0^{L_i} a_{lk}\Gamma_{i,k}(\GR^{\{2\}})\Gamma_{i,l}(\GR^{\{2\}})-{\cal L}({\cal V}^{\{2\}},\GR^{\{2\}})\\
&\le \sum_{i=1}^N \int_{\Omega_i} Q\big(\GE^{\{2\}}_i\big)-{\cal L}({\cal V}^{\{2\}},\GR^{\{2\}}).
\end{aligned}
 \end{equation*} This conclude the proof of the theorem. 
 \end{proof}

\begin{remark} Let us point out that Theorem \ref{theo9.1} shows that for any minimizing sequence $(v_\delta)_\delta$ as in Step 1, the third convergence of the rescaled Green-St Venant's strain tensor in \eqref{(9.7)} is a strong convergence to $\GE_i^{\{0\}}$ in $L^2(\Omega_i;\R^{3\times 3})$.
\end{remark}
\section{Appendix}\label{appendix}
 
In this Appendix, we first give the construction of a suitable sequence (Proposition \ref{PropAprox}) of deformations to prove the second step of the proof  of Theorem \ref{theo8.1}. To do that we first give three lemmas. In a second part of this appendix we also construct  a suitable sequence (Proposition \ref{PropAp}) of deformations to prove the second step of the proof  of Theorem \eqref{theo9.1}. At last Lemma \ref{correctors} provides the elimination technique used  in the proof  of Theorem \eqref{theo9.1}.

The proofs of the three lemmas below are left to the reader.

\begin{lemma}\label{LEMap1} Let $ \GR$ be an element in  $ L^2(0,L ; {\cal C})$. We define  the field $\GR^{'}_n $ of   $L^2(0,L ; {\cal C})$ by
\begin{equation*}
\GR^{'}_n(t)={n\over L}\int_{kL/n}^{(k+1)L/n}\GR(s)ds\quad\hbox{for any $t$ in} \quad ]kL/n,(k+1)L/n[,\quad k\in\{0,\ldots, n-1\}\\
\end{equation*}
We have  
$$ \GR^{'}_n\longrightarrow \GR \enskip\text{strongly in } L^2(0, L;{\cal C}).$$
\end{lemma}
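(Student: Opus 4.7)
The plan is to use the standard density-plus-equicontinuity argument for the conditional-expectation-type averaging operator $T_n:\GR\mapsto\GR'_n$, noting first that $\GR'_n$ does take values in $\mathcal{C}$ because $\mathcal{C}$ is convex and each value $\GR'_n(t)$ is (up to the factor $L/n$) a Bochner mean of $\GR$ over a subinterval, hence a convex combination of values of $\GR$, hence again in $\mathcal{C}$.

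First I would establish that $T_n$ is a contraction on $L^2(0,L;\R^{3\times 3})$: applying the Cauchy--Schwarz (or Jensen's) inequality componentwise on each dyadic-style interval $]kL/n,(k+1)L/n[$ yields
\begin{equation*}
\int_{kL/n}^{(k+1)L/n}|||\GR'_n(t)|||^2\,dt\le\int_{kL/n}^{(k+1)L/n}|||\GR(t)|||^2\,dt,
\end{equation*}
and summing over $k$ gives $\|\GR'_n\|_{L^2}\le\|\GR\|_{L^2}$ uniformly in $n$.

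Next I would verify the conclusion on a dense subclass: for $\GR\in C^0([0,L];\R^{3\times 3})$, the uniform continuity of $\GR$ on $[0,L]$ shows that for every $\eps>0$ there is an $n_0$ such that $|||\GR(t)-\GR(s)|||\le\eps$ whenever $|t-s|\le L/n_0$; since $\GR'_n(t)$ is a mean of $\GR(s)$ over $s$ in an interval of length $L/n$ containing $t$, this forces $\|\GR'_n-\GR\|_{L^\infty}\le\eps$ and a fortiori $\|\GR'_n-\GR\|_{L^2}\to 0$. By the classical density of $C^0([0,L];\R^{3\times 3})$ in $L^2(0,L;\R^{3\times 3})$, together with the uniform contractivity bound $\|T_n\|\le 1$, an $\eps/3$ argument extends the convergence from continuous matrix fields to arbitrary $\GR\in L^2(0,L;\mathcal{C})$.

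There is no real obstacle here; the only mild point worth mentioning is that one must observe that the target space $\mathcal{C}$ is preserved by $T_n$ (which follows from convexity of $\mathcal{C}$ and Jensen's inequality applied to the indicator that forces values into $\mathcal{C}$), so that the convergence indeed takes place inside $L^2(0,L;\mathcal{C})$ and not merely in the ambient space $L^2(0,L;\R^{3\times 3})$.
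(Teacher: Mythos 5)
Your proof is correct: the paper leaves this lemma's proof to the reader, and your argument (values of $\GR'_n$ stay in ${\cal C}$ since the Bochner mean of a ${\cal C}$-valued field lies in the closed convex set ${\cal C}$, the averaging operators are contractions on $L^2$ by Jensen's inequality, convergence holds for continuous fields by uniform continuity, and a density/$\eps$-argument concludes) is exactly the standard argument intended. The only point worth stating explicitly is that membership of the averages in ${\cal C}$ uses that ${\cal C}=\overline{conv}(SO(3))$ is closed as well as convex, which your phrasing glosses over slightly but which is immediate from the definition.
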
 
\begin{lemma}\label{LEMap1bis} Let $\GR$ be in $ {\cal C}$. There exist $(\lambda_1,\ldots,\lambda_p)\in [0,1]^p$ and $(\GR_1,\ldots , \GR_p)\in \big(SO(3))^p$ such that
 $$\sum_{i=1}^p \lambda_i=1,\qquad \GR=\sum_{i=1}^p \lambda_i\GR_i.$$ We set
 \begin{equation*}
 \mu_0={1\over 2n},\qquad
 \mu_i= \mu_{i-1}+\Big(1-{1\over n}\Big)\lambda_{i},\qquad i\in\{1,\ldots, p\}. 
 \end{equation*}
 We define  $\GR^{'}_n $ in  $L^2(0,L ; SO(3))$ by: for any $k\in\{0,\ldots, n-1\}$ and for a.e.  $y\in ]0, 1[$ we set
\begin{equation*}
\GR^{'}_n\big({k\over n}+{y\over n} \big)=
\left\{\begin{aligned}
&\GI_3\qquad y\in ]0,\mu_0[,\\
&\GR_i\qquad y\in]\mu_{i-1},\mu_{i}[  \quad, i\in \{1,\ldots, p\},\\
&\GI_3\qquad y\in ]1-1/2n, 1[.
\end{aligned}\right.
\end{equation*}
We have 
$$\GR^{'}_n\rightharpoonup \GR\quad \hbox{weakly in }\enskip L^2(0, L;{\cal C}).$$
\end{lemma}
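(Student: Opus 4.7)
The plan is to verify that $\GR^{'}_n$ is well defined, compute its mean over one period of length $L/n$, and then deduce weak convergence from a density argument.

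First I would check well-definedness. Since $\sum_{i=1}^p\lambda_i=1$, an easy induction gives $\mu_p=\mu_0+(1-1/n)\sum_i\lambda_i=1-1/(2n)$, so the two buffers $]0,1/(2n)[$ and $]1-1/(2n),1[$ together with the intervals $]\mu_{i-1},\mu_i[$ of length $(1-1/n)\lambda_i$ do partition $]0,1[$; hence $\GR^{'}_n$ is well defined with values in $SO(3)$. The pattern being the same for every $k$, $\GR^{'}_n$ is periodic of period $L/n$ and uniformly bounded in $L^\infty(0,L;\R^{3\times 3})$. The change of variable $s=(k+y)L/n$ then gives, for each $k$,
$$\frac{n}{L}\int_{kL/n}^{(k+1)L/n}\GR^{'}_n(s)\,ds=\mu_0\GI_3+\sum_{i=1}^p(\mu_i-\mu_{i-1})\GR_i+\frac{1}{2n}\GI_3=\frac{1}{n}\GI_3+\Big(1-\frac{1}{n}\Big)\GR,$$
using $\GR=\sum_i\lambda_i\GR_i$. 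Denote this matrix by $M_n$, so clearly $M_n\to\GR$.

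For the weak convergence, since $(\GR^{'}_n)$ is bounded in $L^\infty$, by density of $C([0,L])$ in $L^2(0,L)$ it suffices to prove $\int_0^L\GR^{'}_n(s)\phi(s)\,ds\to\GR\int_0^L\phi$ for every continuous $\phi$. Splitting the integral over the intervals $I_k=]kL/n,(k+1)L/n[$ and replacing $\phi$ by its value at $kL/n$ modulo an error controlled by the modulus of continuity $\omega_\phi$,
$$\int_0^L\GR^{'}_n\phi\,ds=\sum_{k=0}^{n-1}\phi(kL/n)\int_{I_k}\GR^{'}_n(s)\,ds+O(\omega_\phi(L/n))=M_n\cdot\frac{L}{n}\sum_{k=0}^{n-1}\phi(kL/n)+o(1).$$
The Riemann sum converges to $\int_0^L\phi$, $M_n\to\GR$, and one obtains the claim; since $\GR\in{\cal C}$, the convergence occurs in $L^2(0,L;{\cal C})$ as stated.

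I do not expect any real obstacle here: the argument is a direct mean computation combined with the classical weak convergence of bounded periodic functions to their average. The two symmetric buffers of $\GI_3$ of width $1/(2n)$ contribute only an $O(1/n)$ perturbation to $M_n$ and are therefore essentially irrelevant for the weak limit itself; their role is structural, ensuring $\GR^{'}_n=\GI_3$ in a neighborhood of each partition point $kL/n$, which is precisely the matching property exploited later in Proposition \ref{PropAprox} when assembling the pieces across knots and fixed extremities.
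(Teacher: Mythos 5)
Your argument is correct and is precisely the computation the paper had in mind (the proof of this lemma is explicitly left to the reader): the intervals tile $]0,1[$ because $\mu_p=1-1/(2n)$, the cell average is $\frac{1}{n}\GI_3+(1-\frac1n)\GR\to\GR$, and the $L^\infty$ bound plus testing against continuous functions via Riemann sums gives the weak $L^2$ convergence. The only point you take for granted is the existence of the finite convex decomposition $\GR=\sum_i\lambda_i\GR_i$ itself, which is immediate since $SO(3)$ is compact, so $\overline{conv}(SO(3))=conv(SO(3))$ and Carath\'eodory applies.
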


\begin{lemma}\label{LEMap4} Let $\theta^*\in [0,\pi]$ there exists a function $\theta\in W^{1,\infty}(0,1)$ such that
\begin{equation*}
\theta(0)=0,\quad \theta(1)=\theta^*,\qquad \int_0^1e^{i\theta(t)}dt={1+e^{i\theta^*}\over 2}.
\end{equation*} Moreover there exists a positive constant which does not depend on $\theta^*$ such that
$$||\theta^{'}||_{L^\infty(0,1)}\le C\theta^*.$$
\end{lemma}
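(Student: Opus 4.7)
The plan is to reduce the complex scalar condition to a single real scalar equation by an antisymmetry ansatz, and then to exhibit $\theta$ inside an explicit one-parameter piecewise-linear family via the intermediate value theorem.

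First, I impose the antisymmetry $\theta(1-t)+\theta(t)=\theta^{*}$ a.e. (so the graph of $\theta$ is centrally symmetric about the point $(1/2,\theta^{*}/2)$). A change of variable $t\leftrightarrow 1-t$ on half the interval gives
\[
\int_{0}^{1}e^{i\theta(t)}dt=e^{i\theta^{*}/2}\int_{0}^{1}\cos\bigl(\theta(t)-\theta^{*}/2\bigr)dt,
\]
which lies on the real ray $e^{i\theta^{*}/2}\mathbb{R}$. Since the target $(1+e^{i\theta^{*}})/2=e^{i\theta^{*}/2}\cos(\theta^{*}/2)$ also lies on that ray, the complex condition collapses to the scalar equation
\[
(*)\qquad \int_{0}^{1}\cos\bigl(\theta(t)-\theta^{*}/2\bigr)dt=\cos(\theta^{*}/2).
\]

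Next, I introduce a one-parameter family of candidates. For $b\ge 0$, let $\theta_{b}$ be the continuous piecewise-linear function on $[0,1/2]$ going from $0$ to $-b$ on $[0,1/4]$ and from $-b$ to $\theta^{*}/2$ on $[1/4,1/2]$, extended antisymmetrically on $[1/2,1]$. Then $\theta_{b}(0)=0$, $\theta_{b}(1)=\theta^{*}$, the antisymmetry holds, and the slopes are at most $2\theta^{*}+4b$. Thus once $b$ can be chosen $\le C_{0}\theta^{*}$, the bound $\|\theta_{b}'\|_{L^{\infty}}\le(2+4C_{0})\theta^{*}$ follows automatically.

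An explicit integration yields
\[
I(b):=\int_{0}^{1/2}\cos\bigl(\theta_{b}(t)-\theta^{*}/2\bigr)dt=\frac{\sin(b+\theta^{*}/2)-\sin(\theta^{*}/2)}{4b}+\frac{\sin(b+\theta^{*}/2)}{2\theta^{*}+4b},
\]
with $I(0)$ defined by continuity. Condition $(*)$ becomes $2I(b)=\cos(\theta^{*}/2)$. The elementary inequality $\sin x>x\cos x$ on $(0,\pi/2]$ shows $2I(0)>\cos(\theta^{*}/2)$ for every $\theta^{*}\in(0,\pi]$, while a Taylor expansion for $b$ of order $\theta^{*}$ (and, as a sanity check, the direct evaluation $2I(\pi/2)=-1/\pi<0$ at $\theta^{*}=\pi$) produces a value of $b$ of order $\theta^{*}$ at which $2I(b)<\cos(\theta^{*}/2)$. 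By continuity of $b\mapsto I(b)$ the intermediate value theorem furnishes a root $b^{*}=b^{*}(\theta^{*})\in(0,C_{0}\theta^{*}]$, and $\theta:=\theta_{b^{*}}$ has all the required properties. The degenerate case $\theta^{*}=0$ is handled by $\theta\equiv 0$.

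The main obstacle is the uniform quantitative matching. Near $\theta^{*}=0$ the quantity $2I(0)-\cos(\theta^{*}/2)$ is only of order $\theta^{*2}$, so the downward push given by increasing $b$ must be shown to be of comparable size already at $b$ of order $\theta^{*}$; this requires third-order Taylor expansions of the two terms in $I(b)$ and the inequality $\tan x>x+x^{3}/3$. Near $\theta^{*}=\pi$ the target $\cos(\theta^{*}/2)$ vanishes and one must instead check directly that $I$ changes sign for $b$ in a fixed compact sub-interval of $(0,\theta^{*})$. These two regime checks, together with the continuity of $I$ in $b$, deliver the uniform bound $b^{*}\le C_{0}\theta^{*}$ and finish the proof.
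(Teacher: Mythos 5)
The paper itself offers no proof of this lemma (it is explicitly ``left to the reader''), so your argument stands on its own, and its skeleton is correct: the antisymmetry ansatz $\theta(1-t)+\theta(t)=\theta^*$ does reduce the complex constraint to the scalar equation $(*)$, the piecewise-linear family $\theta_b$ satisfies the boundary conditions with $\|\theta_b'\|_{L^\infty}\le 2\theta^*+4b$, your formula for $I(b)$ is exact, and $2I(0)>\cos(\theta^*/2)$ follows from $\sin x>x\cos x$ as you say. The only step you merely sketch is the uniform existence of some $b\le C_0\theta^*$ with $2I(b)<\cos(\theta^*/2)$, which you present as two separate regime checks near $\theta^*=0$ and $\theta^*=\pi$; this is the crux of the quantitative bound, and as written it is a description of work rather than the work itself. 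It is not, however, a fatal gap, because the claim is true and can be closed in a single computation that removes the case distinction: take $b=\theta^*/2$ and set $x=\theta^*/2\in(0,\pi/2]$, so that
\begin{equation*}
2I(b)-\cos\frac{\theta^*}{2}=\frac{3\sin 2x-2\sin x}{4x}-\cos x ,
\end{equation*}
which for $x<\pi/2$ is negative if and only if $3\sin x-\tan x<2x$. Since $\frac{d}{dx}\bigl(2x-3\sin x+\tan x\bigr)=2-3\cos x+\sec^2 x$ is positive on $(0,\pi/2)$ (multiply by $\cos^2x$ and note $-3c^3+2c^2+1>0$ for $c\in(0,1)$), this inequality holds on all of $(0,\pi/2)$, and at $\theta^*=\pi$ one evaluates directly to get $-1/\pi<0$, as you observed. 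The intermediate value theorem then gives $b^*\in(0,\theta^*/2)$ solving $(*)$, whence $\|\theta_{b^*}'\|_{L^\infty}\le 2\theta^*+4b^*\le 4\theta^*$, i.e.\ the lemma holds with $C=4$. With this single estimate inserted in place of your ``two regime checks,'' your proof is complete and fully elementary.
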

The above lemmas allow us to establish the following result.

 \begin{prop}\label{PropAprox} Let $({\cal V},\GR)$ be in  $\V\R_1$. There exists  a sequence  $\big(({\cal V}_n,\GR_n)\big)_n$ in $ \V\R_1$ which satisfies $\GR_n\in W^{1,\infty}({\cal S};SO(3))$, which is equal to $\GI_3$ in a neighbourhood  of each knot $A\in {\cal K}$ and each  fixed extremity belonging to $\Gamma_0$ and moreover which satisfies
\begin{equation*}
\begin{aligned}
{\cal V}_n \rightharpoonup {\cal V} \quad \hbox{weakly  in} \;H^1({\cal S};\R^3),\\
\GR_n\rightharpoonup \GR \quad \hbox{weakly in} \; L^2({\cal S};{\cal C}).
\end{aligned}
\end{equation*} 
 \end{prop}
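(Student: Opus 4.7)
The plan is to construct the sequence segment-by-segment on the skeleton ${\cal S}$. Pick any segment $\gamma_i$ and consider the successive subintervals delimited by consecutive knots (or by a fixed extremity and the adjacent knot). On each such subinterval $I=[a_i^k,a_i^{k+1}]$ I will build $\GR_{n,i}$ so that (a)~$\GR_{n,i}\in W^{1,\infty}(I;SO(3))$, (b)~$\GR_{n,i}\equiv\GI_3$ on small neighborhoods of the endpoints (of size $\eta_n\to0$), (c)~$\GR_{n,i}\rightharpoonup\GR_i$ weakly in $L^2(I;{\cal C})$, and crucially (d)~$\int_I\GR_{n,i}(s)\Gt_i\,ds=\int_I\GR_i(s)\Gt_i\,ds$. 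Condition (b) guarantees the knot and Dirichlet compatibility (the value $\GI_3$ is taken in a neighborhood of each knot and of each fixed extremity), condition (d) will allow me to define ${\cal V}_n$ by pure integration and still keep continuity across knots.

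On a fixed subinterval $I$ of length $\ell$ I apply the three lemmas in cascade. First, Lemma~\ref{LEMap1} produces a piecewise constant approximation $\GR^{(1)}_n$ of $\GR_i|_I$, valued in ${\cal C}$, with $\ell/n$-cells and whose value on each cell equals the cell-average of $\GR_i$; in particular $\int_c \GR^{(1)}_n=\int_c\GR_i$ on every cell $c$. Second, on each cell $c$ I write the constant value $\GR_c\in{\cal C}$ as a finite convex combination $\sum_j\lambda_j^{(c)}\GR_j^{(c)}$ of elements of $SO(3)$ and apply Lemma~\ref{LEMap1bis} with a large parameter $m=m(n)\to\infty$ to replace $\GR_c$ by a piecewise constant $SO(3)$-valued field $\GR^{(2)}_n$ that equals $\GI_3$ on tiny slabs at both ends of every sub-cell of $c$; the resulting sub-cell average differs from $\GR_c$ by $O(1/m)$. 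Third, I use Lemma~\ref{LEMap4} on each remaining internal jump between two successive $SO(3)$-constant pieces to interpolate smoothly inside $SO(3)$: writing the jump $\GR_b=\GR_a\,S$ with $S\in SO(3)$ factored as a rotation by angle $\theta^*\in[0,\pi]$ around a fixed axis, Lemma~\ref{LEMap4} provides a path $\theta\in W^{1,\infty}$ whose average of the planar part reproduces the step-function average, so that the smoothed field has the \emph{same} integral as the piecewise constant one. This produces a field $\GR^{(3)}_n\in W^{1,\infty}(I;SO(3))$ vanishing (equal to $\GI_3$) near the endpoints of $I$, with weak $L^2$-limit $\GR_i$.

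It remains to enforce (d) exactly. The only remaining discrepancy, due to the $O(1/m)$ loss in Step~2, can be concentrated on one interior cell of $I$: on a single sub-cell of length $\sim\ell/(nm)$ I perturb $\GR^{(3)}_n$ by composing with a small smooth rotation of angle $\theta_n^*=O(1/m)$, again designed via Lemma~\ref{LEMap4} so that its contribution to $\int_I\GR_{n,i}\Gt_i$ cancels the total defect exactly. This produces the desired $\GR_{n,i}$ on $I$. Doing this on every subinterval of every $\gamma_i$ and patching gives $\GR_n\in W^{1,\infty}({\cal S};SO(3))$, with $\GR_n=\GI_3$ on a neighborhood of each knot and each fixed extremity. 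I then set
\[
{\cal V}_{n,i}(s)={\cal V}(A_k)+\int_{a_i^k}^{s}\GR_{n,i}(t)\Gt_i\,dt,\qquad s\in[a_i^k,a_i^{k+1}],
\]
on each subinterval; by (d) we have ${\cal V}_{n,i}(a_i^{k+1})={\cal V}(A_{k+1})$, so ${\cal V}_n\in H^1({\cal S};\R^3)$, ${\cal V}_n=\phi$ on $\Gamma_0$, and $d{\cal V}_{n,i}/ds_i=\GR_{n,i}\Gt_i$, which gives $({\cal V}_n,\GR_n)\in\V\R_1$.

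The weak convergence $\GR_n\rightharpoonup\GR$ in $L^2({\cal S};{\cal C})$ is obtained by combining the strong $L^2$ convergence in Lemma~\ref{LEMap1}, the weak convergence of Lemma~\ref{LEMap1bis}, and the $L^\infty$-smallness of the smoothing zones coming from Lemma~\ref{LEMap4}. Once $\GR_n\rightharpoonup\GR$ is established, $d{\cal V}_{n,i}/ds_i=\GR_{n,i}\Gt_i$ together with ${\cal V}_n=\phi$ on $\Gamma_0$ yields ${\cal V}_n\rightharpoonup{\cal V}$ in $H^1({\cal S};\R^3)$. The main obstacle is Step~4: simultaneously enforcing the pointwise constraint $\GR_n(s)\in SO(3)$, the continuity (compatibility) at knots via (d), and the weak convergence to the ${\cal C}$-valued target $\GR$. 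This is precisely what Lemma~\ref{LEMap4} is engineered for, by giving smooth paths in $SO(3)$ with prescribed mean values and controlled $W^{1,\infty}$-norm.
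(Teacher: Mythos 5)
Your overall architecture is the paper's: cell averaging via Lemma \ref{LEMap1}, replacement by $SO(3)$-valued piecewise constant fields equal to $\GI_3$ near the cell ends via Lemma \ref{LEMap1bis}, smoothing of the jumps inside $SO(3)$ via Lemma \ref{LEMap4}, and recovery of ${\cal V}_n$ by integration using preservation of the means between consecutive points of ${\cal K}\cup\Gamma$. The genuine gap is your Step 4. The Lemma \ref{LEMap1bis} construction applied to a cell of constant value $\GR_c$ has average $(1/m)\GI_3+(1-1/m)\GR_c$, so the defect in $\int\GR_{n,i}\Gt_i$ is of order $\ell/(nm)$ per cell, hence of order $\ell/m$ over the whole subinterval $I$. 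Your proposed repair, composing with a rotation of angle $\theta^*_n=O(1/m)$ supported on a single sub-cell of length $\sim\ell/(nm)$, changes the integral only by $O\big(\ell/(nm^2)\big)$, which is too small by a factor $m$ even to cancel the defect of one cell. And even with the correct scaling, you would still have to prove that an \emph{exact} cancellation of an arbitrary defect vector in $\R^3$ is achievable by perturbations constrained to take values in $SO(3)$ and to keep the value $\GI_3$ near the endpoints: the set of achievable corrections is nonlinear (not a subspace), so exactness requires an argument (an intermediate-value or degree-type argument, or an explicit construction), which you assert but do not give. Since the whole point of condition (d) is to make ${\cal V}_n$ well defined and continuous at the knots, this step cannot be left as a claim.

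The paper sidesteps the correction entirely: after the Lemma \ref{LEMap1} step it replaces the target by the convex combination $\big((1-1/n){\cal V}^{'}_n+(1/n)\phi,\;(1-1/n)\GR^{'}_n+(1/n)\GI_3\big)$, which still belongs to $\V\R_1$ and still converges to $({\cal V},\GR)$. With this tilted target, the cell average of the field produced by Lemma \ref{LEMap1bis} (with the same parameter $n$) is exactly $(1/n)\GI_3+(1-1/n)\GR_c$, i.e.\ exactly the target value, so the means are preserved with no defect, ${\cal V}^{''}_n$ is obtained by integration with the correct values at the points of ${\cal K}\cup\Gamma$, and the final smoothing via Lemma \ref{LEMap4} is mean-preserving by construction (its integral condition reproduces the average of the two constant halves of each transition interval), so the knot values are again untouched. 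If you replace your post hoc correction by this pre-tilting toward $\GI_3$, your argument closes; as written, Step 4 is a gap.
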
 
 \begin{proof} For any $n\in \N^*$, we first apply the Lemma \ref{LEMap1} between two consecutive points of the set ${\cal K}\cup \Gamma$ belonging to a same  line $\gamma_i$ of the skeleton ${\cal S}$.  It leads to a sequence $\GR^{'}_n\in L^2({\cal S}; {\cal C})$ which is piecewise constant on ${\cal S}$. Then, we define ${\cal V}^{'}_n$ by integration, using the formula $\ds {d({\cal V}^{'}_n)_i\over ds_i}= (\GR^{'}_n)_i\Gt_i$ between two consecutive points in ${\cal K}\cup \Gamma$ imposing the values 
 $${\cal V}^{'}_n(A)={\cal V} (A)\;\; \hbox{for every point in } {\cal K}\cup\Gamma,$$ which is possible because the means of  $(\GR^{'}_n)_i$ are preserved between two points in ${\cal K}\cup \Gamma$.
 
Hence, we obtain  a sequence  $\big(({\cal V}^{'}_n,\GR^{'}_n)\big)_n$ in $\V\R_1$  which satisfies 
 \begin{equation*}
\begin{aligned}
&{\cal V}^{'}_n \longrightarrow {\cal V} \quad \hbox{strongly  in} \;H^1({\cal S};\R^3),\\
&\GR^{'}_n \longrightarrow \GR \quad \hbox{strongly in} \; L^2({\cal S};{\cal C}).
\end{aligned}
\end{equation*}Then, we consider the couple $\Big(\big(1-{1/n}\big){\cal V}^{'}_n+{1/ n}\phi,\big(1-{1/n}\big)\GR^{'}_n+{1/ n}\GI_3\big)\Big)\in\V\R_1$ and we apply the Lemma \ref{LEMap1bis}, again  between two consecutive points of ${\cal K}\cup \Gamma$ on a same  line $\gamma_i$ of the skeleton ${\cal S}$. We obtain  an element $({\cal V}^{''}_n,\GR^{''}_n)\in\V\R_1$ such that $\GR^{''}_n$ belongs to $L^2({\cal S}; SO(3))$ and is piecewise constant on ${\cal S}$ and equal to $\GI_3$ in a neighbourhood of each knot and each  fixed extremity belonging to $\Gamma_0$ and moreover which satisfies 
 \begin{equation*}
\begin{aligned}
&{\cal V}^{''}_n \rightharpoonup {\cal V} \quad \hbox{weakly  in} \;H^1({\cal S};\R^3),\\
&\GR^{''}_n\rightharpoonup \GR \quad \hbox{weakly in} \; L^2({\cal S};{\cal C}).
\end{aligned}
\end{equation*} Finally, thanks to the Lemma \ref{LEMap4}, we replace the element  $({\cal V}^{''}_n,\GR^{''}_n)$ by another  $({\cal V}^{'''}_n,\GR^{'''}_n)$ in $\V\R_1$ such that $\GR^{'''}_n$ belongs to $W^{1,\infty}({\cal S}; SO(3))$. Notice that we do this modification without changing the values of  ${\cal V}^{''}_n$ at the points of ${\cal K}\cup \Gamma$. We get
 \begin{equation*}
\begin{aligned}
&{\cal V}^{'''}_n \rightharpoonup {\cal V} \quad \hbox{weakly  in} \;H^1({\cal S};\R^3),\\
&\GR^{'''}_n\rightharpoonup \GR \quad \hbox{weakly in} \; L^2({\cal S};{\cal C}).
\end{aligned}
\end{equation*} The proposition is proved.
\end{proof}
 The proposition below gives an approximation result of a deformation in a rod by deformations which are rotations near the extremities. Recall that for a single rod $\Omega_\delta=]0,L[\times \omega_\delta$, the rescaling into $\Omega=]0,L[\times \omega$ is performed through the operator $\Pi_\delta $ defined by  
$$\Pi_{\delta} (\psi)(y_1,Y_2,Y_3)=\psi(s,\delta Y_2,\delta Y_3)\quad\hbox{ for a.e.}\quad (y_1,Y_2,Y_3) \in \Omega.$$
\begin{prop}\label{PropAp} Let $\big({\cal V} , \GR, \overline{v}\big)$ be in $H^1(0, L ; \R^3)\times H^1(0, L ; SO(3))\times  W^{1,\infty}(\Omega ; \R^3)$  such that $\ds{d{\cal V}\over dy_1}=\GR\Ge_1$. Let $\rho>0$ be given. There exists a sequence of  deformations $v_\delta
\in W^{1,\infty}(\Omega_\delta ; \R^3)$  satisfying for $\delta$ small enough 
\begin{equation}\label{vtest}
\begin{aligned}
v_\delta(y)&={\cal V}(0)+\GR(0) y,\qquad y\in]0,\rho\delta
[\times \omega_\delta,\\
v_\delta(y)&={\cal V}(L)+\GR(L)(y-L\Ge_1),\qquad y\in]L-\rho\delta, L[\times \omega_\delta,\\
&\det\big(\nabla v_\delta(y)\big)>0 \quad \hbox{for almost any } y\in \Omega_\delta,\\
& ||\Pi_{\delta}\big((\nabla v_\delta)^T\nabla v_\delta
-\GI_3\big)||_{L^\infty(\Omega ; \R^{3\times 3})}\le C,\\
\end{aligned}
\end{equation}
where the constant $C$ does not depend on $\delta$.

Moreover the following convergence holds true
\begin{equation}\label{vtest2}
\begin{aligned}
{1\over 2\delta}\Pi_\delta\big((\nabla v_\delta)^T&\nabla v_\delta
-\GI_3\big)\longrightarrow \big({d\GR\over dy_1}\big(Y_2\Ge_2+Y_3\Ge_3)\;|\; {\partial \overline{v}\over \partial Y_2} \; |\; {\partial \overline{v}\over \partial Y_3} \;\big)^T\GR\\
+\GR^T\big({d\GR\over dy_1}&\big(Y_2\Ge_2+Y_3\Ge_3)\;|\; {\partial \overline{v}\over \partial Y_2} \; |\; {\partial \overline{v}\over \partial Y_3}\;\big)\quad\hbox{strongly in }\enskip L^2(\Omega ; \R^{3\times 3}).
\end{aligned}
\end{equation}
\end{prop}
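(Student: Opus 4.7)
The plan is to construct $v_\delta$ by a classical rod-type Ansatz, modified near the two endpoints so as to coincide exactly with the prescribed rigid motions there. First, I would introduce approximations $(\widetilde{\cal V}_\delta, \widetilde{\GR}_\delta, \overline{w}_\delta)$ of $({\cal V}, \GR, \overline{v})$ as follows. Set $\widetilde{\GR}_\delta = \GR(0)$ on $[0, \rho\delta]$ and $\widetilde{\GR}_\delta = \GR(L)$ on $[L-\rho\delta, L]$; on the transition intervals $[\rho\delta, 2\rho\delta]$ and $[L-2\rho\delta, L-\rho\delta]$ let $\widetilde{\GR}_\delta$ be the $SO(3)$-geodesic joining $\GR(0)$ to $\GR(2\rho\delta)$, respectively $\GR(L-2\rho\delta)$ to $\GR(L)$, and put $\widetilde{\GR}_\delta = \GR$ on $[2\rho\delta, L-2\rho\delta]$. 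Define $\widetilde{\cal V}_\delta$ by integrating $d\widetilde{\cal V}_\delta/dy_1 = \widetilde{\GR}_\delta \Ge_1$ from the value ${\cal V}(0)$; the $O(\delta)$ discrepancy at $y_1 = L$ is absorbed by a linear interpolation on $[L-2\rho\delta, L-\rho\delta]$ whose slope differs from $\widetilde{\GR}_\delta \Ge_1$ only by $o(1)$ in $L^\infty$ on a set of measure $\rho\delta$. Since $\GR \in H^1 \subset C^0$, the $SO(3)$-geodesic lengths tend to $0$, whence $\widetilde{\GR}_\delta \to \GR$ and $\widetilde{\cal V}_\delta \to {\cal V}$ strongly in $H^1$. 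Finally take $\overline{w}_\delta(y_1, \cdot) = \chi_\delta(y_1)\overline{v}(y_1, \cdot)$ with $\chi_\delta$ a smooth cut-off equal to $1$ on $[2\rho\delta, L-2\rho\delta]$ and vanishing on $[0, \rho\delta] \cup [L-\rho\delta, L]$.

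With these pieces at hand I set
\begin{equation*}
v_\delta(y) = \widetilde{\cal V}_\delta(y_1) + \widetilde{\GR}_\delta(y_1)(y_2\Ge_2 + y_3\Ge_3) + \delta^2\, \widetilde{\GR}_\delta(y_1)\, \overline{w}_\delta\bigl(y_1, y_2/\delta, y_3/\delta\bigr).
\end{equation*}
On $]0, \rho\delta[ \times \omega_\delta$ the corrector vanishes and the first two terms reduce to ${\cal V}(0) + \GR(0) y$, yielding the left boundary condition in \eqref{vtest}; the right endpoint is handled analogously. A direct calculation produces $\nabla v_\delta = \widetilde{\GR}_\delta + B_\delta$ with
\begin{equation*}
B_\delta = \bigl(\widetilde{\GR}_\delta'(y_2\Ge_2+y_3\Ge_3)\,\big|\,\delta\widetilde{\GR}_\delta \partial_{Y_2}\overline{w}_\delta\,\big|\,\delta\widetilde{\GR}_\delta \partial_{Y_3}\overline{w}_\delta\bigr) + O(\delta^2),
\end{equation*}
so $B_\delta = O(\delta)$ in $L^\infty$ outside the transition zones. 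Hence $\det\nabla v_\delta > 0$ for $\delta$ small and the uniform bound $\|\Pi_\delta((\nabla v_\delta)^T\nabla v_\delta - \GI_3)\|_{L^\infty(\Omega;\R^{3\times 3})} \le C$ required by \eqref{vtest} holds.

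For the strong convergence \eqref{vtest2} I would expand $(\nabla v_\delta)^T\nabla v_\delta - \GI_3 = \widetilde{\GR}_\delta^T B_\delta + B_\delta^T \widetilde{\GR}_\delta + B_\delta^T B_\delta$, apply $\Pi_\delta$, divide by $2\delta$, and pass to the limit using $\widetilde{\GR}_\delta^T \widetilde{\GR}_\delta = \GI_3$ to simplify the columns containing the corrector and the strong convergences $\widetilde{\GR}_\delta \to \GR$ in $H^1$ (hence in $L^\infty$) and $\overline{w}_\delta \to \overline{v}$ in $L^2(0,L;H^1(\omega;\R^3))$. Identification of the symmetric limit with the right-hand side of \eqref{vtest2} is then immediate, while the quadratic term $B_\delta^T B_\delta$ contributes only $O(\delta)$ in $L^2$ and drops out.

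The main technical obstacle is controlling the two transition intervals of length $\rho\delta$, on which $\widetilde{\GR}_\delta'$ has magnitude $\sim |\GR(2\rho\delta) - \GR(0)|/\delta$, unbounded as $\delta \to 0$. The resulting contribution to the squared $L^2(\Omega;\R^{3\times 3})$-norm of the rescaled strain from such a zone is bounded by
\begin{equation*}
\frac{1}{\delta^2}\int_{[\rho\delta, 2\rho\delta]\times\omega_\delta} |B_\delta|^2\, dy \;\lesssim\; \frac{|\GR(2\rho\delta) - \GR(0)|^2}{\delta},
\end{equation*}
and by Cauchy-Schwarz $|\GR(2\rho\delta) - \GR(0)| \le \sqrt{2\rho\delta}\,\|\GR'\|_{L^2(0, 2\rho\delta)} = o(\sqrt\delta)$ by absolute continuity of $\int |\GR'|^2$. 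Hence this contribution vanishes as $\delta \to 0$, and combined with the strong convergences on compact subsets of $]0, L[$, the strong $L^2$-convergence claimed in \eqref{vtest2} follows.
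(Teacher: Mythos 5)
There is a genuine gap, and it sits exactly where the statement is delicate: the uniform (pointwise) control of $\nabla v_\delta$. You regularize $\GR$ only on the two end zones of length $O(\delta)$ and keep $\widetilde\GR_\delta=\GR$ on the bulk $[2\rho\delta,L-2\rho\delta]$. But $\GR$ is only in $H^1(0,L;SO(3))$, so $\widetilde\GR_\delta'=\GR'$ is merely an $L^2$ function there, and the column $\widetilde\GR_\delta'(y_1)\,(y_2\Ge_2+y_3\Ge_3)$ of your $B_\delta$ is of size $\delta\,|\GR'(y_1)|$, which is \emph{not} $O(\delta)$ in $L^\infty$ (nor even bounded) unless $\GR\in W^{1,\infty}$; the same $\GR'$ appears when you differentiate the corrector $\delta^2\widetilde\GR_\delta\overline w_\delta$ in $y_1$. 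Consequently your $v_\delta$ need not belong to $W^{1,\infty}(\Omega_\delta;\R^3)$, the bound $\|\Pi_\delta((\nabla v_\delta)^T\nabla v_\delta-\GI_3)\|_{L^\infty(\Omega;\R^{3\times 3})}\le C$ fails in general, and $\det(\nabla v_\delta)>0$ cannot be guaranteed a.e. (on the set where $\delta|\GR'|$ is large, $\nabla v_\delta$ is far from $SO(3)$). Your transition-zone and convergence computations are fine, but three of the four properties in \eqref{vtest} are not established by the construction as written.

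The missing idea, which is the heart of the paper's proof, is to regularize $\GR$ (and ${\cal V}$) on the \emph{whole} interval at the scale $\eps=\theta\delta$: the paper takes the piecewise-linear interpolation $\GR_\eps$ of $\GR$ on a mesh of size $\eps$, constant on the two end cells, so that $\|d\GR_\eps/dy_1\|_{L^\infty}\le C/\eps$, $\|\GR_\eps-\GR\|_{L^\infty}\le C\sqrt\eps\,\|d\GR/dy_1\|_{L^2}$, and, crucially, $(\GR_\eps-\GR)/\eps\to 0$ strongly in $L^2$; the last convergence is what controls the extra terms $(\GR_\eps-\GR)^T\GR_\eps+\GR^T(\GR_\eps-\GR)$ in the strain coming from the fact that $\GR_\eps$ is no longer $SO(3)$-valued. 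Choosing $\theta$ large then gives $\|\nabla v_\delta-\GR\|_{L^\infty}\le 1/2$, hence the determinant condition, the $L^\infty$ bound, and $W^{1,\infty}$ regularity, while the convergence \eqref{vtest2} follows as in your last step. A minor additional discrepancy: your Ansatz uses the corrector $\delta^2\widetilde\GR_\delta\,\overline w_\delta$ rather than $\delta^2 d_\eps\overline v$, so the limit you would identify has columns $\GR\,\partial\overline v/\partial Y_\alpha$ in place of $\partial\overline v/\partial Y_\alpha$, i.e.\ a variant of \eqref{vtest2} rather than the stated formula; this is harmless for the use made in Theorem \ref{theo9.1} but should be fixed if you want the proposition as stated.
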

\begin{proof} Let $N\ge 4$ and $\eps=L/N$. At the end of the proof, we will fix  $\eps\ge \delta$ in terms of $\delta$ and $\rho$ or equivalently $N$ in terms of $\delta$ and $\rho$. We define $\GR_\eps\in W^{1,\infty}(0,L ; \R^{3\times 3})$ as follows:
\begin{equation*}
\begin{aligned}
&\GR_\eps\quad\hbox{is piecewise linear on } [0,L],\\
&\GR_\eps(y_1)=\GR(0)\qquad y_1\in[0,\eps],\\
&\GR_\eps(k\eps)=\GR(k\eps),\quad k\in\{2,\ldots, N-2\}\\
&\GR_\eps(y_1)=\GR(L)\qquad y_1\in[L-\eps, L],\\
\end{aligned}
\end{equation*} There exists a constant $C$ independent of $\eps$ such that 
\begin{equation*}
\begin{aligned}
&\Big\|{d\GR_\eps\over dy_1}\Big\|_{L^2(0, L ; \R^{3\times 3})}\le C\Big\|{d\GR\over dy_1}\Big\|_{L^2(0, L ; \R^{3\times 3})},\qquad \Big\|{d\GR_\eps\over dy_1}\Big\|_{L^\infty(0, L ; \R^{3\times 3})}\le {C\over \eps},\\
& \big\|\GR_\eps - \GR\big\|_{L^2(0, L ; \R^{3\times 3})}\le C\eps\Big\|{d\GR\over dy_1}\Big\|_{L^2(0, L ; \R^{3\times 3})},\quad   \big\|\GR_\eps - \GR\big\|_{L^\infty(0, L ; \R^{3\times 3})}\le C\sqrt\eps\Big\|{d\GR\over dy_1}\Big\|_{L^2(0, L ; \R^{3\times 3})}.\\
\end{aligned}
\end{equation*}
Due to the specific construction of the sequence  $\GR_\varepsilon$, one also get 
\begin{equation}\label{10.conv}
\begin{aligned}
\GR_\eps\longrightarrow \GR\quad \hbox{strongly in }\quad  H^1(0, L ; \R^{3\times 3}),\\
{1\over \eps}\big(\GR_\eps-\GR)\longrightarrow 0\quad \hbox{strongly in }\quad  L^2(0, L ; \R^{3\times 3}).
\end{aligned}
\end{equation}
 Now we define ${\cal V}_\eps$ by
\begin{equation*}
\begin{aligned}
&{\cal V}_\eps\quad\hbox{is piecewise linear on } [0,L],\\
&{\cal V}_\eps(y_1)={\cal V}(0)+y_1\GR(0)\Ge_1\qquad y_1\in[0,\eps],\\
&{\cal V}_\eps(k\eps)={\cal V}(k\eps),\quad k\in\{2,\ldots, N-2\},\\
&{\cal V}_\eps(y_1)={\cal V}(L)+(y_1-L)\GR(L)\Ge_1\qquad y_1\in[L-\eps, L].
\end{aligned}
\end{equation*} We have
\begin{equation*}
\begin{aligned}
&\Big\|{d{\cal V}_\eps\over dy_1}-{d{\cal V}\over dy_1}\Big\|_{L^2(0, L ; \R^{3})}\le C\eps\Big\|{d\GR\over dy_1}\Big\|_{L^2(0, L ; \R^{3\times 3})},\\
&\Big\|{d{\cal V}_\eps\over dy_1}-{d{\cal V}\over dy_1}\Big\|_{L^\infty(0, L ; \R^{3})}\le  C\sqrt\eps\Big\|{d\GR\over dy_1}\Big\|_{L^2(0, L ; \R^{3\times 3})},
\end{aligned}
\end{equation*}
and the following convergence
\begin{equation}\label{11.conv}
{1\over \eps}\Big({d{\cal V}_\eps\over dy_1}-{d{\cal V}\over dy_1}\Big)\longrightarrow 0\quad \hbox{strongly in }\quad  L^2(0, L ; \R^{3}).
\end{equation}
We set
$$d_\eps(y_1)=\inf\Big(1, \sup\big(0, {y_1-\eps\over \eps}\big), \sup\big(0, {L-\eps-y_1\over \eps}\big)\Big), \qquad y_1\in [0,L].$$

Now, we consider the deformation $v_\eps$ defined by
$$v_\eps(y)={\cal V}_\eps(y_1)+\GR_\eps(y_1)\big(y_2\Ge_2+y_3\Ge_3)+\delta^2d_\eps(y_1)\overline{v}\big(y_1, {y_2\over \delta}, {y_3\over \delta}\big),\qquad x\in \Omega_\delta.$$
A straightforward calculation gives
\begin{equation*}
\begin{aligned}
(\nabla v_\eps)^T\nabla v_\eps-\GI_3&=(\nabla v_\eps-\GR_\eps)^T\GR_\eps+\GR_\eps^T(\nabla v_\eps-\GR_\eps)+(\nabla v_\eps-\GR_\eps)^T(\nabla v_\eps-\GR_\eps)\\
&+(\GR_\eps-\GR)^T\GR_\eps+\GR^T(\GR_\eps-\GR),\\
{\partial  v_\eps\over \partial y_1}-\GR_\eps\Ge_1& ={d{\cal V}_\eps\over dy_1}- \GR_\eps\Ge_1+{d\GR_\eps\over dy_1}\big(y_2\Ge_2+y_3\Ge_3)+\delta^2\Big({\partial\overline{v}\over \partial y_1}\big(\cdot, {y_2\over \delta}, {y_3\over \delta}\big)+{d d_\eps\over dy_1}\overline{v}\big(\cdot, {y_2\over \delta}, {y_3\over \delta}\big)\Big),\\
{\partial  v_\eps\over \partial y_\alpha}-\GR_\eps\Ge_\alpha& = \delta d_\eps{\partial\overline{v}\over \partial Y_\alpha}\big(\cdot, {y_2\over \delta}, {y_3\over \delta}\big),\qquad \alpha\in\{2,3\}.
\end{aligned}
\end{equation*} Hence, thanks to the above estimates we obtain
$$||\nabla v_\eps-\GR||_{L^\infty(\Omega_\delta ; \R^{3\times 3})}\le C\Big(\sqrt\eps\Big\|{d\GR\over dy_1}\Big\|_{L^2(0, L ; \R^{3\times 3})}+{\delta\over \eps}+{\delta^2\over \eps}||\overline{v}||_{W^{1,\infty}(\Omega ; \R^3\Big)}.$$ Finally, we choose $\theta \ge \rho$ and large enough so that $\theta \ge 4C$. Then setting $\eps=\theta\delta$ show that if $\delta$ is small enough (denoting now $v_\eps$ by $v_\delta$)
$$||\nabla v_\delta-\GR||_{L^\infty(\Omega_\delta ; \R^{3\times 3})}\le {1\over 2},$$
 which shows first  that  the determinant of $\nabla v_\eps$ is positive.  Moreover the above inequality also implies (e.g.) that $||(\nabla v_\delta)^T\nabla v_\delta-\GI_3||_{L^\infty(\Omega_\delta ; \R^{3\times 3})}\le 2$.

 At least the above decomposition of $(\nabla v_\delta)^T\nabla v_\delta-\GI_3$, the strong convergences \eqref{10.conv} and  \eqref{11.conv} together with the definition of $d_\varepsilon$ allows to obtain the strong convergence \eqref{vtest2}.
\end{proof} 
We know justify the elimination process used in Theorem  \ref{theo9.1}.
We set
$$\W=\Big\{\psi\in  H^1(\omega ; \R^3)\;|\; \int_\omega\psi(Y_2,Y_3)dY_2dY_3=0,\enskip\int_\omega\big(Y_3\psi_2(Y_2,Y_3)-Y_2\psi_3(Y_2,Y_3)\big)dY_2dY_3=0\Big\}$$ We equip $\W$ with the norm
$$||\psi||_\W=||\nabla\psi_1||_{L^2(\omega ; \R^2)}+||e_{22}(\psi)||_{L^2(\omega)}+||e_{23}(\psi)||_{L^2(\omega)}+||e_{33}(\psi)||_{L^2(\omega)}$$ where
$$e_{kl}(\psi)={1\over 2}\Big({\partial\psi_k\over \partial Y_l}+{\partial\psi_l\over \partial Y_k}\Big),\qquad (k,l)\in\{2,3\}^2.$$ In the space $\W$, the above norm is equivalent to the $H^1$ norm.
\vskip 1mm
Let $\GQ$ be a matrix field  belonging to $L^\infty(\omega ; \R^{6\times 6})$ satisfying for a. e. $ (Y_2,Y_3)\in \omega$
\begin{equation}\label{propQ}
\begin{aligned}
& \GQ(Y_2,Y_3)\hbox{ is a symmetric positive definite matrix}  ,\\ 
& \GQ(-Y_2, -Y_3)=\GQ(Y_2,Y_3),\\
& \forall \xi\in \R^6,\qquad 
c|\xi|^2\le \GQ(Y_2,Y_3)\, \xi \cdot \xi \le C|\xi|^2
\end{aligned}
\end{equation} where $c$ and $C$ are positive constants which do not depend on $(Y_2,Y_3)$. 

We denote  $\chi_i\in \W$, $i\in\{1,2,3\}$ the solutions of the following variational problems:
 \begin{equation}\label{chii}
 \begin{aligned}
& \forall \psi\in \W,\\
&\int_\omega  
\GQ\,
\begin{pmatrix}
0\\
\ds {1\over 2}{\partial \chi_{i,1}\over \partial Y_2}\\
\ds{1\over 2}{\partial \chi_{i,1}\over \partial Y_3}\\
e_{22}(\chi_i)\\  e_{23}(\chi_i)\\   e_{33}(\chi_i)
\end{pmatrix}\cdot
\begin{pmatrix}
0\\
\ds{1\over 2}{\partial \psi_1\over \partial Y_2}\\
\ds{1\over 2}{\partial \psi_1\over \partial Y_3}\\
e_{22}(\psi)\\  e_{23}(\psi)\\   e_{33}(\psi)
\end{pmatrix}=-
\int_\omega  
\GQ\,
\GV_i\cdot
\begin{pmatrix}
0\\
\ds {1\over 2}{\partial \psi_1\over \partial Y_2}\\
\ds{1\over 2}{\partial \psi_1\over \partial Y_3}\\
e_{22}(\psi)\\  e_{23}(\psi)\\   e_{33}(\psi)
\end{pmatrix}
\end{aligned}
\end{equation}
where
\begin{equation}\label{SCM}
\GV_1=\begin{pmatrix}
0\\
\ds -{1\over 2}Y_3\\
\ds{1\over 2}Y_2\\
0 \\  0 \\  0
\end{pmatrix},\quad
\GV_2=\begin{pmatrix}
Y_3\\
0\\
0\\
0 \\  0 \\  0
\end{pmatrix},\quad
\GV_3=\begin{pmatrix}
-Y_2\\
0\\
0\\
0 \\  0 \\  0
\end{pmatrix}.
\end{equation}
The fields $\chi_i$, $i\in\{1,2,3\}$, satisfy
\begin{equation}\label{propchi}
\chi_i(-Y_2,-Y_3)=\chi_i(Y_2,Y_3)\qquad \hbox{for a.e. } (Y_2,Y_3)\in \Omega.
\end{equation}

\begin{lemma}\label{correctors} There exists a symmetric positive definite matrix $\GA$ such that for any  $\Gamma_i\in \R$, $i\in\{1,2,3\}$, 
\begin{equation*}
\begin{aligned}
&\forall (Z,\psi)\in \R\times H^1(\omega;\R^3),\\
 &\int_\omega  
\GQ\,
\begin{pmatrix}
-Y_2\Gamma_3+Y_3\Gamma_2+Z\\
\ds -{1\over 2}Y_3\Gamma_1+{1\over 2}{\partial \psi_1\over \partial Y_2}\\
\ds {1\over 2}Y_2\Gamma_1+{1\over 2}{\partial \psi_1\over \partial Y_3}\\
e_{22}(\psi)\\  e_{23}(\psi)\\   e_{33}(\psi)
\end{pmatrix}\cdot
\begin{pmatrix}
-Y_2\Gamma_3+Y_3\Gamma_2+Z\\
\ds -{1\over 2}Y_3\Gamma_1+{1\over 2}{\partial \psi_1\over \partial Y_2}\\
\ds {1\over 2}Y_2\Gamma_1+{1\over 2}{\partial \psi_1\over \partial Y_3}\\
e_{22}(\psi)\\  e_{23}(\psi)\\   e_{33}(\psi)
\end{pmatrix}\\
\ge &  \int_\omega  
\GQ\,
 \begin{pmatrix}
-Y_2\Gamma_3+Y_3\Gamma_2\\
\ds -{1\over 2}Y_3\Gamma_1+{1\over 2}{\partial \overline{\chi}_1\over \partial Y_2}\\
\ds {1\over 2}Y_2\Gamma_1+{1\over 2}{\partial \overline{\chi}_1\over \partial Y_3}\\
e_{22}(\overline{\chi})\\  e_{23}(\overline{\chi})\\   e_{33}(\overline{\chi})
\end{pmatrix}\cdot
\begin{pmatrix}
-Y_2\Gamma_3+Y_3\Gamma_2\\
\ds -{1\over 2}Y_3\Gamma_1+{1\over 2}{\partial \overline{\chi}
_1\over \partial Y_2}\\
\ds {1\over 2}Y_2\Gamma_1+{1\over 2}{\partial \overline{\chi}
_1\over \partial Y_3}\\
e_{22}(\overline{\chi}
)\\  e_{23}(\overline{\chi}
)\\   e_{33}(\overline{\chi}
)
\end{pmatrix}
=\GA\begin{pmatrix}
\Gamma_1\\
\Gamma_2\\
\Gamma_3
\end{pmatrix}\cdot\begin{pmatrix}
\Gamma_1\\
\Gamma_2\\
\Gamma_3
\end{pmatrix}
\end{aligned}
\end{equation*} where
\begin{equation}\label{barchi}
\overline{\chi}=\Gamma_1\chi_1+\Gamma_2 \chi_2+\Gamma_3\chi_3\in \W.
\end{equation}
\end{lemma}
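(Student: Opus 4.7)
The approach is standard quadratic minimization: regarding $\Gamma=(\Gamma_1,\Gamma_2,\Gamma_3)$ as a fixed parameter, define the column
$$\GB(Z,\psi,\Gamma):=\Bigl(-Y_2\Gamma_3+Y_3\Gamma_2+Z,\;-\tfrac12 Y_3\Gamma_1+\tfrac12\tfrac{\partial\psi_1}{\partial Y_2},\;\tfrac12 Y_2\Gamma_1+\tfrac12\tfrac{\partial\psi_1}{\partial Y_3},\;e_{22}(\psi),\;e_{23}(\psi),\;e_{33}(\psi)\Bigr)^T$$
so that the left-hand side of the inequality to be proved is $F_\Gamma(Z,\psi):=\int_\omega \GQ\,\GB\cdot\GB$. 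The plan is to show that the unique minimizer of $F_\Gamma$ over $\R\times\W$ is $(0,\overline{\chi})$, with minimum value equal to $\GA\Gamma\cdot\Gamma$ for a symmetric positive definite matrix $\GA$. One first notes that $F_\Gamma$ only sees $\psi$ through $\nabla\psi_1$ and the $e_{\alpha\beta}(\psi)$, hence it is invariant under adding to $\psi$ any rigid motion (a constant plus a rotation around the $Y_1$-axis); quotienting out this kernel brings us naturally into $\W$. Existence and uniqueness of the minimizer in $\R\times\W$ then follow from Lax--Milgram: the coercivity lower bound in \eqref{propQ} combined with the equivalence of $\|\cdot\|_\W$ with the $H^1$-norm on $\W$ (a Korn-type inequality, stated just before the lemma) makes the quadratic part of $F_\Gamma$ coercive in $(Z,\psi)$.

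To identify the minimizer, I decompose $\GB(Z,\psi,\Gamma)=\GB(0,\psi,0)+Z\Gf+\sum_{i=1}^3 \Gamma_i\GV_i$ with $\Gf:=(1,0,0,0,0,0)^T$. The defining variational problems \eqref{chii} for the $\chi_i$'s, combined by linearity through \eqref{barchi}, give
$$\int_\omega \GQ\,\GB(0,\overline{\chi},0)\cdot\GB(0,\phi,0)\;=\;-\int_\omega \GQ\Bigl(\sum_{i=1}^3\Gamma_i\GV_i\Bigr)\cdot\GB(0,\phi,0),\qquad\forall\,\phi\in\W,$$
which is exactly the Euler--Lagrange equation of $F_\Gamma$ in the $\psi$-direction at $(Z,\psi)=(0,\overline{\chi})$. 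For the $Z$-direction, stationarity reduces to $\int_\omega \GQ\,\GB(0,\overline{\chi},\Gamma)\cdot \Gf=0$. This is where the symmetry hypotheses \eqref{propQ} on $\GQ$ and \eqref{propchi} on the $\chi_i$'s are used: under $(Y_2,Y_3)\mapsto(-Y_2,-Y_3)$ the matrix $\GQ$ is even, while $Y_2$, $Y_3$ and the partial derivatives of the even functions $\chi_i$ are odd, so every component of $\GB(0,\overline{\chi},\Gamma)$ is an odd function on the symmetric disc $\omega$ and the integral vanishes. By strict convexity, $(0,\overline{\chi})$ is the global minimizer.

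To extract $\GA$, linearity of $\overline{\chi}$ in $\Gamma$ gives $\GB(0,\overline{\chi},\Gamma)=\sum_{i=1}^3\Gamma_i\bigl(\GB(0,\chi_i,0)+\GV_i\bigr)$, whence
$$F_\Gamma(0,\overline{\chi})=\sum_{i,j=1}^3 a_{ij}\Gamma_i\Gamma_j,\qquad a_{ij}:=\int_\omega \GQ\bigl(\GB(0,\chi_i,0)+\GV_i\bigr)\cdot\bigl(\GB(0,\chi_j,0)+\GV_j\bigr),$$
which makes $\GA=(a_{ij})$ manifestly symmetric. Positive definiteness follows from the pointwise coercivity of $\GQ$, which gives $\GA\Gamma\cdot\Gamma\ge c\,\|\GB(0,\overline{\chi},\Gamma)\|_{L^2(\omega)}^2$, together with the observation that $\GB(0,\overline{\chi},\Gamma)\equiv 0$ forces $\Gamma=0$: the first component $-Y_2\Gamma_3+Y_3\Gamma_2$ yields $\Gamma_2=\Gamma_3=0$, and a short manipulation of the second and third components (for instance, cross-differentiation in $Y_2,Y_3$) then gives $\Gamma_1=0$.

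The main obstacle is the symmetry argument guaranteeing that the $Z$-stationarity is automatic at $(0,\overline{\chi})$; without the evenness of $\GQ$ and $\chi_i$, one would have to carry along a non-trivial scalar corrector, which would spoil the clean dependence on $\Gamma$ alone. Once this odd/even bookkeeping is in place the rest is a routine application of convex analysis and the Korn-type inequality on $\W$.
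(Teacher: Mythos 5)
Your proof is correct and follows essentially the same route as the paper: reduce the minimization to $\R\times\W$, identify $(0,\overline{\chi})$ as the minimizer by combining the variational problems \eqref{chii} with the parity of $\GQ$ and of the $\chi_i$'s to kill the $Z$-direction, and read off the symmetric matrix $\GA$ from the value at the minimum. The only divergence is the positive-definiteness step, where you identify the kernel directly (the first component forces $\Gamma_2=\Gamma_3=0$ and cross-differentiation of the second and third forces $\Gamma_1=0$) instead of the paper's norm-equivalence argument on $\R\times H^1(\omega)/\R$; both are valid, and yours avoids the compactness-by-contradiction step.
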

\begin{proof} First of all, it remains the same  to minimize the functional 
\begin{equation*}
(Z,\psi)\longmapsto \int_\omega  
\GQ\,
\begin{pmatrix}
-Y_2\Gamma_3+Y_3\Gamma_2+Z\\
\ds-{1\over 2}Y_3\Gamma_1+{1\over 2}{\partial \psi_1\over \partial Y_2}\\
\ds {1\over 2}Y_2\Gamma_1+{1\over 2}{\partial \psi_1\over \partial Y_3}\\
e_{22}(\psi)\\  e_{23}(\psi)\\   e_{33}(\psi)
\end{pmatrix}\cdot
\begin{pmatrix}
-Y_2\Gamma_3+Y_3\Gamma_2+Z\\
\ds -{1\over 2}Y_3\Gamma_1+{1\over 2}{\partial \psi_1\over \partial Y_2}\\
\ds {1\over 2}Y_2\Gamma_1+{1\over 2}{\partial \psi_1\over \partial Y_3}\\
e_{22}(\psi)\\  e_{23}(\psi)\\   e_{33}(\psi)
\end{pmatrix}
\end{equation*} over $\R\times H^1(\omega;\R^3)$ or $\R\times \W$. The second property of $\GQ$ in \eqref{propQ} implies  that the minimum is achieved for $Z=0$.  Then, it is easy to  prove that the minimum of the functional 
 over the space $\R\times \W$ is achieved for the element $(0,\overline{\chi})$, where $\overline{\chi}$ is given by \eqref{barchi}.
 
 \noindent  It is easy to   prove (e.g. by contradiction) that the norm 
$$||(\Gamma_1,\theta)||=\sqrt{\int_\omega\Big(\Big|-{1\over 2}Y_3\Gamma_1+{1\over 2}{\partial \theta
 \over \partial Y_2}\Big|^2+\Big|{1\over 2}Y_2\Gamma_1+{1\over 2}{\partial \theta
\over \partial Y_3}\Big|^2\Big)}$$ on the space $\R\times H^1(\omega)/\R$ is equivalent to the product norm of this space. Hence, there exists a positive constant $C$ such that for any $\Gamma_1\in \R$ and any function $\theta
\in H^1(\omega)$ we have
$$\int_\omega\Big(\Big|-{1\over 2}Y_3\Gamma_1+{1\over 2}{\partial \theta
 \over \partial Y_2}\Big|^2+\Big|{1\over 2}Y_2\Gamma_1+{1\over 2}{\partial \theta
\over \partial Y_3}\Big|^2\Big)\ge  C\big(\Gamma_1^2+||\nabla\theta
||^2_{L^2(\omega ; \R^2)}\big).$$ Taking into account the third property in \eqref{propQ}, this allows to prove the positivity of matrix $\GA$.
\end{proof}

\begin{remark}\label{10.7} In the case where the matrix $\GQ$ corresponds to an isotropic and homogeneous material, we have
\begin{equation*}
\GQ=\begin{pmatrix}
\lambda+2\mu & 0 & 0  & \lambda &0 & \lambda \\
0 & 2\mu & 0 & 0 & 0 & 0  \\
0 & 0 & 2\mu & 0 & 0 & 0  \\
\lambda & 0 & 0  & \lambda+2\mu & 0 & \lambda \\
0 & 0 & 0 & 0 & 2\mu & 0 \\
\lambda & 0 & 0 & \lambda & 0 &  \lambda+2\mu
\end{pmatrix}
\end{equation*} where $\lambda$, $\mu$ are the Lam's constants. We get
\begin{equation*}
\chi_1(Y_2,Y_3)=0,\qquad \chi_2(Y_2,Y_3)=\begin{pmatrix}
\ds -\nu Y_2Y_3 \\ \ds -\nu{Y^2_3-Y^2_2\over 2}
\end{pmatrix},\qquad \chi_3(Y_2,Y_3)=\begin{pmatrix}
\ds \nu{Y^2_2-Y^2_3\over 2}\\ \nu Y_2Y_3 
\end{pmatrix}
\end{equation*}  where $\nu$ is the Poisson's coefficient. The matrix $\GA$ is equal to
\begin{equation*}
\GA=\begin{pmatrix}
\ds {\pi\mu \over 4} & 0 & 0 \\
0 & \ds  {\pi E \over 4} & 0 \\
0 & 0 & \ds {\pi E \over 4}
\end{pmatrix}
\end{equation*} where $E$ is the Young's modulus.
\end{remark}
  
\end{document}